\newcommand\cyr{%
\renewcommand\rmdefault{wncyr}%
\renewcommand\sfdefault{wncyss}%
\renewcommand\encodingdefault{OT2}%
\normalfont
\selectfont}
\DeclareTextFontCommand{\textcyr}{\cyr}
\DeclareFontFamily{OT1}{rsfs}{}
\DeclareFontShape{OT1}{rsfs}{n}{it}{<-> rsfs10}{}
\DeclareMathAlphabet{\mathscr}{OT1}{rsfs}{n}{it}
\numberwithin{equation}{section}
\newtheorem{theorem}{Theorem}[section]
\newtheorem{lemma}[theorem]{Lemma}
\newtheorem{proposition}[theorem]{Proposition}
\newtheorem{corollary}[theorem]{Corollary}
\newtheorem{maintheorema}{Main Theorem A}
\newtheorem{maintheoremb}{Main Theorem B}
\theoremstyle{definition}
\newtheorem{definition}[theorem]{Definition}
\newtheorem{remark}[theorem]{Remark}
\theoremstyle{remark}
\newtheorem{example}[theorem]{Example}
\newtheorem{acknowledgement}{Acknowledgement}
\newcommand{\im}{\operatorname{Im}}
\renewcommand{\ker}{\operatorname{Ker}}
\newcommand{\Spec}{\operatorname{Spec}}
\newcommand{\Ht}{\operatorname{ht}}
\newcommand{\Div}{\operatorname{Div}}
\newcommand{\gr}{\operatorname{gr}}
\newcommand{\Tor}{\operatorname{Tor}}
\newcommand{\Cl}{\operatorname{Cl}}
\newcommand{\ddiv}{\operatorname{div}}
\newcommand{\rank}{\operatorname{rank}}
\newcommand{\fm}{\frak{m}}
\newcommand{\fp}{\frak{p}}
\newcommand{\fq}{\frak{q}}
\newcommand{\fn}{\frak{n}}
\newcommand{\mbN}{\mathbb{N}}
\newcommand{\sharQ}{\mcQ_{\redu}}
\newcommand{\mcQ}{\mathcal{Q}}
\newcommand{\redu}{\operatorname{red}}
\newcommand{\Prin}{\operatorname{Prin}}
\newcommand{\gp}{\operatorname{gp}}
\newcommand{\relint}{\operatorname{relint}}
\newcommand{\bfi}{\textbf{I}}
\newcommand{\bfiii}{\textbf{III}}
\begin{document}
\title[Local log-regular rings vs. toric rings]
{Local log-regular rings vs. toric rings}

\author[S.Ishiro]{Shinnosuke Ishiro}
\address{National Institute of Technology, Gunma College, 580 Toriba-machi, Maebashi-shi, Gunma 371-8530, Japan}
\email{shinnosukeishiro@gmail.com}
\thanks{2020 {\em Mathematics Subject Classification\/}:13F65, 13H10, 14A21}
\keywords{local log-regular rings, canonical modules, Gorenstein local rings, divisor class groups}


\begin{abstract}
Local log-regular rings are a certain class of Cohen-Macaulay local rings that are treated in logarithmic geometry.
Our paper aims to provide purely commutative ring theoretic proof of several ring-theoretic properties of local log-regular rings such as an explicit description of a canonical module, and the finite generation of the divisor class group.
\end{abstract}

\maketitle

\section{Introduction}
In his paper \cite{Kat94}, Kato established the theory of toric geometry without a base by using logarithmic structures of Fontaine--Illusie.
He named the schemes appearing in the theory \textit{log-regular schemes}.
Their local rings are referred to as \textit{local log-regular rings}.
The class of local log-regular rings has properties similar to those of toric rings (for example, they are Cohen--Macaulay and normal).
Moreover, this class has a structure theorem analogous to Cohen's structure theorem (cf. Theorem \ref{Katostructure}).
By the structure theorem, the completion of a local log-regular ring can be expressed as a complete monoid algebra.

The class of local log-regular rings is also important from the perspective of commutative ring theory in mixed characteristic.
Gabber and Ramero explicitly constructed a perfectoid ring that is an algebra over a local log-regular ring (\cite[\S 17.2]{GRBook} or \cite[Construction 3.58]{INS22}).
They apply the construction to prove that a local log-regular ring is a splinter.\footnote{In a joint work with K. Nakazato and K. Shimomoto, we gave more elementary proof for this result using the Direct Summand Conjecture. See \cite[Theorem 2.27]{INS22}}
Moreover, Cai--Lee--Ma--Schwede--Tucker recently proved that a complete local log-regular ring is BCM-regular, which is a BCM-analogue of strong $F$-regularity (\cite[Proposition 5.3.5]{CLMST22}).

In this paper, we explore ring-theoretic properties of local log-regular rings, in particular canonical modules and divisor class groups.
Firstly, we explore canonical modules of local log-regular rings.
The existence of a dualizing complex of a log-regular scheme is already proved by Gabber and Ramero \cite[Theorem 12.5.42]{GRBook}.
We investigate the structure of canonical modules of local log-regular rings explicitly.

\begin{maintheorema}[Theorem \ref{canonicalmod}]\label{mainthm}
Let $(R, \mathcal{Q}, \alpha)$ be a local log-regular ring, where $\mathcal{Q}$ is finitely generated, cancellative, reduced, and root closed (by Remark \ref{submono}, we may assume that $\mathcal{Q} \subseteq \mathbb{N}^l$ for some $l>0$ ).
Let $x_1, \ldots, x_r$ be a sequence of elements of $R$ such that $\overline{x_1}, \ldots, \overline{x_r}$ is a regular system of parameters for $R/I_\alpha$.
Then $R$ admits a canonical module and
\begin{equation}\label{mainthmeq}
\langle (x_1\cdots x_r)\alpha(a)~|~ a \in \relint \mcQ \rangle
\end{equation}
is the canonical module of $R$, where $\relint \mcQ$ is the relative interior of $\mathcal{Q}$.
\end{maintheorema}
Though the proof of \cite[Theorem 12.5.42]{GRBook} is sheaf-theoretic, we show Main Theorem A by reducing it to the case of a semigroup ring.
Robinson proved the toric case in \cite{Rob22}.
We mention the relationship between his result and Main Theorem A in Remark \ref{Robinsonrem}.

As applications of Main Theorem A, we provide a criterion for the Gorenstein property of local log-regular rings (Corollary \ref{logGorenstein}).
Moreover, we provide a structure theorem of Gorenstein local log-regular rings with two-dimensional monoids (Proposition \ref{exampletwodim}).
Additionally, we prove that local log-regular rings are pseudo-rational (Proposition \ref{logpseudo}).

Secondly, we show that the divisor class group of a local log-regular ring is finitely generated.
To establish this, we prove that a log structure induces the isomorphism between the divisor class group of a local log-regular ring and that of the associated monoid.

\begin{maintheoremb}[{Theorem \ref{fgcl}}]
    Let $(R, \mathcal{Q}, \alpha)$ be a local log-regular ring.
Then $\alpha$ induces the the group homomorphism $\Cl(\alpha) : \Cl(\mathcal{Q}) \to \Cl(R)$ and it is an isomorphism.
In particular, $\Cl(R)$ is finitely generated.
\end{maintheoremb}

Combining Main Theorem B with Chouinard's result in \cite{Cho81}, we find that the divisor class group of a local log-regular ring $(R,\mcQ,\alpha)$ is isomorphic to that of the monoid algebra $k[\mcQ]$ over a field $k$.

At the end of the introduction, we provide an outline of this paper.
In \S \ref{SecPreMon}, we present the basic notions of monoids.
In \S \ref{Seclogreg}, we introduce the definition and certain properties of local log-regular rings.
In \S \ref{SectCanonical}, we prove Main Theorem A.
We also provide examples of Gorenstein local log-regular rings.
In \S \ref{SecDiv}, we prove Main Theorem B.

\section{Preliminaries on monoids}\label{SecPreMon}
\subsection{Properties on monoids}
A \textit{monoid} $\mathcal{Q}$ is a commutative semigroup with a unit.
We denote the group of units (resp. the group consisting of elements in the form of $q-p$ where $q,p \in \mcQ$) by $\mathcal{Q}^\times$ (resp. $\mathcal{Q}^{\gp}$).
We also denote by $\mathcal{Q}^{+}$ the set of non-unit elements of $\mathcal{Q}$ (i.e.\ $\mathcal{Q}^{+}=\mathcal{Q}\setminus \mathcal{Q}^\times$). 
Let us recall the terminologies of monoids.
\begin{definition}\label{DefProperties}
Let $\mathcal{Q}$ be a monoid.
\begin{enumerate}
\item
$\mathcal{Q}$ is called \textit{cancellative} if for $x, x'$ and $y \in \mcQ$, $x+y=x' +y$ implies $x=x'$.
\item
$\mathcal{Q}$ is called \textit{reduced} if $\mcQ^{\times}=0$.
\item
$\mcQ$ is called \textit{root closed} if it satisfies the following conditions.
    \begin{itemize}
        \item $\mcQ$ is cancellative.
        \item If $x \in \mcQ^{\gp}$ such that $nx \in \mcQ$ for some $n >0$, then $x \in \mcQ$.
        \end{itemize}
\end{enumerate}
\end{definition}

\begin{remark}
    In the context of logarithmic geometry, the terminologies defined in Definition \ref{DefProperties} are referred to by different names.
    For instance, in Ogus's book \cite{OgusBook}, "cancellative" is called "\textit{integral}", "reduced" is called "\textit{sharp}", and "root closed" is called "\textit{saturated}".
\end{remark}

\begin{definition}
    Let $\mcQ$ be a monoid.
    Then an equivalence relation $\sim$ on $\mcQ$ is called \textit{congruence} if $a \sim b$ implies $a+c \sim b+c$ for any $a,b,c \in \mcQ$.
\end{definition}

\begin{example}[Associated reduced monoids]
    Let $\mcQ$ be a monoid.
    Two elements $a, b \in \mcQ$ are called \textit{associates} if there exists a unit $u \in \mcQ^\times$ such that $a=u+b$.
    If $a, b \in \mcQ$ are associates, we denote them by $a \simeq b$.
    The relation $\simeq$ is a congruence relation and the monoid $\mcQ_{\redu} := \mcQ/\simeq$ is called the \textit{associated reduced monoid of $\mcQ$}.
    By definition, we have $[a] = a + \mcQ^\times$ where $[a]$ is an element of $\mcQ_{\redu}$.
    This implies that $\mcQ$ is reduced if and only if $\mcQ \cong \mcQ_{\redu}$.
\end{example}

We recall ideals and prime ideals of monoids.
\begin{definition}
Let $\mathcal{Q}$ be a monoid.
\begin{enumerate}
\item
A subset $I$ of $\mathcal{Q}$ is an \textit{$s$-ideal} if $a+x \in I$ for any $a \in \mathcal{Q}$ and any $x \in I$.
\item
An ideal $\fp \subseteq \mathcal{Q}$ is called \textit{prime} if $\fp \neq \mathcal{Q}$, and for $p, q \in \mathcal{Q}$, $p+q \in \mathcal{Q}$ implies $p \in \mathcal{Q}$ or $q \in \mathcal{Q}$.
\item
The set of primes ideals of $\mathcal{Q}$ is called the \textit{spectrum of $\mathcal{Q}$} and is denoted by $\Spec (\mathcal{Q})$.
\end{enumerate}
\end{definition}
The spectrum of a monoid becomes a topological space.
We note that the empty set $\emptyset$ and the set $\mathcal{Q}^+$ are prime ideals.
Moreover, $\emptyset$ is the unique minimal prime, and $\mathcal{Q}^+$ is the unique maximal ideal.
We define the dimension of a monoid.
\begin{definition}
The \textit{dimension} of a monoid $\mathcal{Q}$ is the maximum length $d$ of a chain of prime ideals
\begin{center}
$\emptyset=\fq_{0} \subsetneq \fq_{1} \subsetneq \cdots \subsetneq \fq_{d}=\mathcal{Q}^{+}$.
\end{center}
We denote it by $\dim (\mathcal{Q})$.
\end{definition}
Let $\varphi : \mathcal{Q} \to \mathcal{Q}'$ be a monoid homomorphism and let $\fp$ be a prime ideal of $\mathcal{Q}'$.
Then $\varphi^{-1}(\fp)$ is also prime.
Thus one can define the map $\Spec(\varphi) : \Spec (\mathcal{Q}') \to \Spec (\mathcal{Q})$.

\begin{proposition}\label{fundamentalmspc}
    Let $\mcQ$ be a finitely generated and cancellative monoid.
    Then $\Spec(Q)$ is a finite set.
\end{proposition}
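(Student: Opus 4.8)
The plan is to translate $\Spec(\mathcal{Q})$ into a purely combinatorial object -- the set of faces of $\mathcal{Q}$ -- and then bound the number of faces by means of a finite generating set. Call a submonoid $F \subseteq \mathcal{Q}$ a \emph{face} if $a+b \in F$ with $a,b \in \mathcal{Q}$ forces $a \in F$ and $b \in F$. The first step is to verify that $\fp \mapsto \mathcal{Q}\setminus\fp$ is a bijection from $\Spec(\mathcal{Q})$ onto the set of faces of $\mathcal{Q}$, with inverse $F \mapsto \mathcal{Q}\setminus F$: the condition $\fp \neq \mathcal{Q}$ is equivalent to $0 \notin \fp$, i.e. $0 \in \mathcal{Q}\setminus\fp$, while the $s$-ideal axiom and the prime axiom ($p+q \in \fp \Rightarrow p \in \fp$ or $q \in \fp$) become, by contraposition, exactly the face axiom and closure under addition for the complement, and symmetrically in the other direction. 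This reduces the statement to showing that $\mathcal{Q}$ has only finitely many faces.

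For that, fix a finite generating set $q_1,\dots,q_n$ of $\mathcal{Q}$ (which exists since $\mathcal{Q}$ is finitely generated) and to each face $F$ associate the subset $S_F := \{\, i \mid q_i \in F \,\} \subseteq \{1,\dots,n\}$. The key point is the identity $F = \langle q_i \mid i \in S_F \rangle$. One inclusion is immediate, as $F$ is a submonoid containing each $q_i$ with $i \in S_F$. For the other, take $a \in F$ and write $a = \sum_i c_i q_i$ with $c_i \in \mathbb{N}$; whenever $c_j > 0$ we have $a = q_j + r$ with $r := (c_j-1)q_j + \sum_{i \neq j} c_i q_i \in \mathcal{Q}$, and the face property applied to $a = q_j + r \in F$ gives $q_j \in F$, hence $j \in S_F$. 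Thus $a$ is an $\mathbb{N}$-combination of $\{q_i \mid i \in S_F\}$, proving $F \subseteq \langle q_i \mid i \in S_F\rangle$.

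Consequently $F \mapsto S_F$ is injective, so $\mathcal{Q}$ has at most $2^n$ faces, and by the first step $\Spec(\mathcal{Q})$ is finite. I do not expect a genuine obstacle: the only substantive ingredient is the identity $F = \langle q_i \mid i \in S_F\rangle$ of the second step, which is precisely what converts finite generation of $\mathcal{Q}$ into finiteness of its spectrum. (Incidentally, cancellativity is not actually needed for this argument; it is presumably kept in the statement only because it is the standing hypothesis in the applications to local log-regular rings.)
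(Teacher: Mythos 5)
Your proof is correct. The paper gives no argument of its own here --- it simply cites \cite[Chapter {\bf I}, Proposition 1.4.7 (1)]{OgusBook} --- and your argument (prime ideals are exactly the complements of faces, and a face $F$ is recovered from the set $S_F$ of generators it contains, giving the bound $2^n$) is precisely the standard proof found in that reference. Your closing remark is also accurate: cancellativity plays no role in the argument.
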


\begin{proof}
This is \cite[Chapter \bfi, Propositoin 1.4.7 (1)]{OgusBook}.
\end{proof}

The following lemma follows from a discussion of convex polyhedral cones.
\begin{lemma}\label{transformation}
Let $\mathcal{Q}$ be a finitely generated, cancellative, and reduced monoid.
Then the following assertions hold.
\begin{enumerate}
\item\label{8159271}
The equality $\dim (\mcQ) =\rank(\mathcal{Q}^{\gp})$ holds.
\item\label{8159272}
Assume that $\mathcal{Q}^{\gp}$ is a torsion-free abelian group of rank $r$.
    Then there is an injective monoid homomorphism $\mathcal{Q} \hookrightarrow \mathbb{N}^r$.
\end{enumerate}
\end{lemma}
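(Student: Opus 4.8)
The plan is to realize $\mcQ$ inside a rational polyhedral cone and deduce both statements from the face structure of that cone. Since $\mcQ$ is cancellative, the localization map $\mcQ \to \mcQ^{\gp}$ is injective and $\mcQ^{\gp}$ is a finitely generated abelian group; set $n := \rank(\mcQ^{\gp})$ and $V := \mcQ^{\gp} \otimes_{\BZ} \mathbb{R} \cong \mathbb{R}^{n}$, and let $\iota \colon \mcQ \to V$ be the composite. Because $\mcQ$ is finitely generated, $\iota(\mcQ)$ generates a finitely generated, hence rational polyhedral, cone $C := \mathbb{R}_{\geq 0}\,\iota(\mcQ) \subseteq V$; and since $\mcQ$ generates $\mcQ^{\gp}$ as a group, $\iota(\mcQ)$ spans $V$ over $\mathbb{R}$, so $\dim C = n$.

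Next I would record that reducedness of $\mcQ$ forces $C$ to be strongly convex. The lineality space $C \cap (-C)$ of the rational cone $C$ is a rational linear subspace; if it were nonzero it would contain a nonzero lattice point $v \in \iota(\mcQ^{\gp})$, and since $v$ and $-v$ both lie in $C$, a positive integer multiple of each lies in $\iota(\mcQ)$ (rational points of a rational cone are nonnegative rational combinations of its lattice generators), say $\iota(q) = Nv$ and $\iota(q') = -Nv$ with $q,q' \in \mcQ$. Then $\iota(q+q') = 0$, so $q+q'$ is torsion in $\mcQ^{\gp}$ and hence $k(q+q') = 0$ in $\mcQ$ for some $k \geq 1$; writing $k(q+q') = (q+q') + (k-1)(q+q')$ exhibits $q+q'$ as a unit, so $q+q' = 0$ by reducedness, forcing $q = 0$ and $Nv = \iota(q) = 0$ — a contradiction. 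Hence $C \cap (-C) = \{0\}$.

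For part \ref{8159271}, I would invoke the standard inclusion-reversing bijection between $\Spec(\mcQ)$ and the set of faces of $C$, sending a prime $\fp$ to the face $\mathbb{R}_{\geq 0}\,\iota(\mcQ \setminus \fp)$; this holds for any finitely generated cancellative monoid (the same source \cite{OgusBook} cited for Proposition \ref{fundamentalmspc}). Under it, $\emptyset$ corresponds to $C$ and the maximal ideal $\mcQ^{+}$ corresponds to the minimal face of $C$, which is $C \cap (-C) = \{0\}$ by the previous paragraph. A chain of primes $\emptyset = \fq_{0} \subsetneq \cdots \subsetneq \fq_{d} = \mcQ^{+}$ thus corresponds to a strictly decreasing chain of faces $C = \tau_{0} \supsetneq \cdots \supsetneq \tau_{d} = \{0\}$; as the dimension strictly decreases along such a chain of faces of a polyhedral cone, one gets $d \leq \dim C = n$, with equality realized by a complete flag of faces. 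Therefore $\dim(\mcQ) = n = \rank(\mcQ^{\gp})$.

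For part \ref{8159272}, torsion-freeness of $\mcQ^{\gp}$ gives $\mcQ^{\gp} \cong \BZ^{r}$, the map $\iota \colon \mcQ \hookrightarrow \BZ^{r}$ is genuinely injective, and $C \subseteq \mathbb{R}^{r}$ is a full-dimensional strongly convex rational cone; hence its dual cone $C^{\vee} = \{u \in (\mathbb{R}^{r})^{\ast} : \langle u,x\rangle \geq 0 \text{ for all } x \in C\}$ is full-dimensional, and I can pick $u_{1},\dots,u_{r} \in C^{\vee}$ lying in the dual lattice and linearly independent. The homomorphism $\phi \colon \BZ^{r} \to \BZ^{r}$, $x \mapsto (\langle u_{1},x\rangle,\dots,\langle u_{r},x\rangle)$, is injective (the $u_{i}$ form a $\mathbb{Q}$-basis of $(\mathbb{R}^{r})^{\ast}$) and carries $C$ into $\mathbb{R}_{\geq 0}^{\,r}$, hence $\mcQ \subseteq C \cap \BZ^{r}$ into $\mbN^{r}$, so $\phi|_{\mcQ} \colon \mcQ \hookrightarrow \mbN^{r}$ is the desired embedding. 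The step needing the most care is the face correspondence invoked for \ref{8159271} when $\mcQ$ is not saturated — checking that $\mcQ$ intersected with the preimage of a face of $C$ is a face of $\mcQ$, and that $\mathbb{R}_{\geq 0}\,\iota(F)$ is a face of $C$ for each face $F$ of $\mcQ$; once this is granted, strong convexity from reducedness and the polyhedral-duality facts are routine.
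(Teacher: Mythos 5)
Your argument is correct, and it is essentially the cone-theoretic argument the paper alludes to ("this is obtained by using a discussion of convex polyhedral cones") but does not write out, deferring instead to the citation of Gabber--Ramero, Corollary 6.4.12. Your write-up supplies the details that reference contains: strong convexity of $C$ from reducedness, the prime-ideal/face correspondence for fine monoids giving $\dim(\mcQ)=\dim C=\rank(\mcQ^{\gp})$, and the embedding into $\mbN^{r}$ via $r$ independent lattice points of the (full-dimensional) dual cone.
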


\begin{proof}
    The assertion (\ref{8159271}) is \cite[Corollary 6.4.12 (i)]{GRBook} and the assertion (\ref{8159272}) is \cite[Corollary 6.4.12 (iv)]{GRBook}.
\end{proof}


\begin{lemma}\label{decompmon}
    Let $\mcQ$ be a cancellative monoid such that $\sharQ$ is finitely generated and cancellative.
    Then there exists an isomorphism of monoids
    $$\mcQ \cong \sharQ \times \mcQ^\times. $$
\end{lemma}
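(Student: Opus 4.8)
The plan is to exhibit the isomorphism as a splitting of the natural surjection $\pi\colon \mcQ \to \sharQ$, obtained by first splitting the corresponding sequence of groupifications. Since $\mcQ$ and $\sharQ$ are cancellative they embed into $\mcQ^{\gp}$ and $\sharQ^{\gp}$, and since $\sharQ = \mcQ/\mcQ^\times$ one gets a short exact sequence of abelian groups
\[
0 \lo \mcQ^\times \lo \mcQ^{\gp} \overset{\pi^{\gp}}{\lo} \sharQ^{\gp} \lo 0 .
\]
As $\sharQ$ is finitely generated, $\sharQ^{\gp}$ is a finitely generated abelian group; I would next argue that it is in fact \emph{free} (equivalently, that $\mcQ^\times$ is a direct summand of $\mcQ^{\gp}$), so that the sequence splits and we may fix a group-theoretic section $\sigma\colon \sharQ^{\gp} \to \mcQ^{\gp}$ of $\pi^{\gp}$.

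The key elementary step is then that, inside $\mcQ^{\gp}$, one has $\mcQ = (\pi^{\gp})^{-1}(\sharQ)$: the inclusion $\subseteq$ is clear, and conversely if $g \in \mcQ^{\gp}$ satisfies $\pi^{\gp}(g) \in \sharQ$, choose $q \in \mcQ$ with $\pi^{\gp}(q) = \pi^{\gp}(g)$; then $g - q \in \ker \pi^{\gp} = \mcQ^\times$, hence $g = q + (g-q) \in \mcQ + \mcQ^\times = \mcQ$. In particular, since $\pi^{\gp}\circ\sigma = \id_{\sharQ^{\gp}}$, we get $\sigma(\bar q) \in (\pi^{\gp})^{-1}(\sharQ) = \mcQ$ for every $\bar q \in \sharQ$, so $\sigma$ restricts to a monoid homomorphism $s\colon \sharQ \to \mcQ$ splitting $\pi$.

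Finally I would verify that
\[
\phi\colon \sharQ \times \mcQ^\times \lo \mcQ, \qquad (\bar q, u) \longmapsto s(\bar q) + u
\]
is an isomorphism of monoids. It is a homomorphism because $s$ is. It is injective: applying $\pi^{\gp}$ to an equality $s(\bar q) + u = s(\bar q') + u'$ forces $\bar q = \bar q'$, hence $s(\bar q) = s(\bar q')$, and then $u = u'$ by cancellation in the group $\mcQ^{\gp}$. It is surjective: for $x \in \mcQ$ set $\bar q := \pi(x)$ and $u := x - s(\bar q)$; then $u \in \ker \pi^{\gp} = \mcQ^\times$ and $\phi(\bar q, u) = s(\bar q)+u = x$.

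The main obstacle is the freeness of $\sharQ^{\gp}$ claimed in the first paragraph: finite generation alone does not suffice, since a finitely generated cancellative sharp monoid may have torsion in its groupification, so one needs an additional input. For instance, when $\mcQ$ is root closed, $\sharQ$ is a fine, sharp, and saturated monoid, and the groupification of such a monoid is torsion free (if $n\bar x \in \sharQ$ for $\bar x \in \sharQ^{\gp}$, saturatedness gives $\bar x \in \sharQ$, and applying this to $\bar x$ and $-\bar x$ when $n\bar x = 0$ shows $\bar x \in \sharQ^\times = 0$); alternatively one may simply invoke the corresponding statement from \cite{OgusBook} or \cite{GRBook}. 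Once the section $s$ is in hand, all the remaining verifications are routine.
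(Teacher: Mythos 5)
Your argument is the standard proof of this splitting, and it is complete and correct \emph{once the section $\sigma$ exists}; the paper itself gives no argument here but simply cites \cite[Lemma 6.2.10]{GRBook}, so there is nothing to compare step by step. The reduction $\mcQ=(\pi^{\gp})^{-1}(\sharQ)$, the restriction of $\sigma$ to a monoid section $s$, and the verification that $(\bar q,u)\mapsto s(\bar q)+u$ is an isomorphism are all fine.

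The point you flag at the end --- that finite generation of $\sharQ$ does not force $\sharQ^{\gp}$ to be free --- is not a defect of your proof but a genuine defect of the lemma as printed: under the stated hypotheses the conclusion can actually fail. Take
\[
\mcQ \;=\; \bigl(\{0\}\times 2\mathbb{Z}\bigr)\,\cup\,\bigl\{(n,m)\in\mathbb{Z}^2 \;|\; n\ge 1\bigr\}\;\subseteq\;\mathbb{Z}^2 .
\]
This is a cancellative monoid with $\mcQ^\times=\{0\}\times 2\mathbb{Z}$ and $\mcQ^{\gp}=\mathbb{Z}^2$, and $\sharQ$ is the submonoid of $\mathbb{N}\times\mathbb{Z}/2\mathbb{Z}$ generated by $(1,0)$ and $(1,1)$, which is finitely generated, cancellative and sharp. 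Since $(\sharQ\times\mcQ^\times)^{\gp}\cong\mathbb{Z}\times(\mathbb{Z}/2\mathbb{Z})\times\mathbb{Z}$ has torsion while $\mcQ^{\gp}=\mathbb{Z}^2$ does not, no isomorphism $\mcQ\cong\sharQ\times\mcQ^\times$ exists. So the lemma needs the additional hypothesis that $\sharQ^{\gp}$ be a free abelian group (this is the form in which the splitting is proved in \cite{GRBook}), and that hypothesis is available everywhere the paper uses the lemma: in Remark \ref{rmk2125} the monoid $\overline{\mcQ}$ of Definition \ref{log-regular} is root closed, which gives exactly the torsion-freeness you describe. In short, your proof is correct for the statement the paper actually needs, and you have in addition correctly detected that the hypotheses as written are too weak.
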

\begin{proof}
    This is {\cite[Lemma 6.2.10]{GRBook}}.
\end{proof}

\subsection{Krull monoids and their divisor class groups}
In this subsection, we give a review of divisor class groups of Krull monoids.
Krull monoids have a long history in factorization theory and they are related to many mathematical fields, such as algebraic number theory, analytic number theory, combinatorial theory, and commutative ring theory.
For details, we refer the reader to \cite{BGBook}, \cite{GRBook}, \cite{GFBook}, or \cite{GZ20}.
First, we define fractional ideals of monoids.

\begin{definition}[Fractional ideals of monoids]
Let $\mathcal{Q}$ be a cancellative monoid.
Then a \textit{fractional ideal} of $\mathcal{Q}$ is a $\mathcal{Q}$-submodule $I \subseteq \mathcal{Q}^{\gp}$ such that $I \neq \emptyset$ and $x I := \{ x+a ~|~ a \in I \} \subseteq \mathcal{Q}$ for some $x \in \mathcal{Q}$.
\end{definition}

\begin{lemma}
Let $\mathcal{Q}$ be a cancellative monoid.
Then the following assertions hold.
\begin{enumerate}
\item
If $I_1, \ldots, I_n$ are fractional ideals of $\mcQ$, then $\displaystyle\bigcap_{i=1}^n I_i$ is also fractional.
\item
If $I_1, I_2$ are fractional ideals of $\mcQ$, then $I_1 I_2 := \{ x+y ~|~ x\in I_1, y \in I_2 \}$ is also fractional.
\end{enumerate}
\end{lemma}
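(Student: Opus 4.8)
The plan is to verify, in each case, the two defining conditions of a fractional ideal directly, the only subtle point being the non-emptiness of a finite intersection. Recall that a subset $I \subseteq \mcQ^{\gp}$ is a fractional ideal precisely when $I$ is a nonempty $\mcQ$-submodule of $\mcQ^{\gp}$ and there exists $x \in \mcQ$ with $x + I \subseteq \mcQ$.

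For (1), write $J = \bigcap_{i=1}^n I_i$. That $J$ is a $\mcQ$-submodule of $\mcQ^{\gp}$ is immediate: if $a \in J$ and $q \in \mcQ$, then $q + a \in I_i$ for every $i$, hence $q + a \in J$. To see that $J$ is nonempty --- the one step that is not purely formal --- I would fix for each $i$ an element $a_i \in I_i$ and an element $x_i \in \mcQ$ with $x_i + I_i \subseteq \mcQ$, and consider $w := \sum_{i=1}^n a_i + \sum_{i=1}^n x_i \in \mcQ^{\gp}$. For each fixed $j$ one can regroup $w - a_j = \sum_{i \neq j}(a_i + x_i) + x_j$; since every summand $a_i + x_i$ lies in $\mcQ$ and $x_j \in \mcQ$, we get $w - a_j \in \mcQ$, and therefore $w = (w - a_j) + a_j \in I_j$ because $I_j$ is a $\mcQ$-submodule. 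As $j$ was arbitrary, $w \in J$. Finally $x_1 + J \subseteq x_1 + I_1 \subseteq \mcQ$ with $x_1 \in \mcQ$, so $J$ is fractional.

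For (2), which I read as the assertion that $I_1 I_2 = \{x + y \mid x \in I_1,\ y \in I_2\}$ is again a fractional ideal, the verification is entirely routine. The set $I_1 I_2$ is a nonempty subset of $\mcQ^{\gp}$ (choosing $a \in I_1$ and $b \in I_2$ gives $a + b \in I_1 I_2$), it is a $\mcQ$-submodule since $q + (x + y) = (q + x) + y$ with $q + x \in I_1$, and if $x_i + I_i \subseteq \mcQ$ with $x_i \in \mcQ$ for $i = 1, 2$, then $(x_1 + x_2) + I_1 I_2 \subseteq \mcQ$ because $(x_1 + x_2) + (x + y) = (x_1 + x) + (x_2 + y)$ is a sum of two elements of $\mcQ$.

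The only genuine obstacle is the non-emptiness in (1): a finite intersection of nonempty sets can certainly be empty, so one must really exploit the fractional-ideal hypothesis, and the symmetric element $w$ constructed above is the device that does it. Everything else is a mechanical check of the submodule condition and of the existence of a common ``denominator'' $x \in \mcQ$.
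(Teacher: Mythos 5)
Your proposal is correct, and for the ``common denominator'' step it follows the same route as the paper: pick $x_i \in \mcQ$ with $x_i + I_i \subseteq \mcQ$ and observe that $x_1$ works for the intersection and $x_1 + x_2$ works for the product. The genuine difference is that you also verify the non-emptiness of $\bigcap_{i=1}^n I_i$, which the paper's own proof silently skips even though the definition of a fractional ideal explicitly requires $I \neq \emptyset$ and a finite intersection of nonempty sets need not be nonempty. Your device --- the symmetric element $w = \sum_i a_i + \sum_i x_i$, with the regrouping $w - a_j = \sum_{i \neq j}(a_i + x_i) + x_j \in \mcQ$ showing $w \in I_j$ for every $j$ --- is exactly the standard way to close this gap, and it is the one nontrivial point in the whole lemma. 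So your argument is not a different proof so much as a complete one; the submodule checks and the denominator bookkeeping match the paper, and the extra paragraph you supply is an improvement rather than a detour.
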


\begin{proof}
(1):
Set $J:= \displaystyle\bigcap_{i=1}^n I_i$.
Since $I_i$ is a fractional ideal, there exists an element $a_i \in \mathcal{Q}$ such that $a_i I_i \subseteq \mathcal{Q}$.
Then $a_iJ \subseteq a_iI_i \subseteq \mathcal{Q}$.

(2):
Pick elements $a_1, a_2 \in \mcQ$ such that $a_1 I_1 \subseteq \mcQ$ and $a_2 I_2 \subseteq \mcQ$.
Then, since $a_1a_2(I_1 I_2) \subseteq \mcQ$, we can simply set $x=a_1a_2$.
\end{proof}

We say that a fractional ideal $I$ is \textit{finitely generated} if it is finitely generated as a $\mathcal{Q}$-module.
For any two fractional ideals $I_1$ and $I_2$, we define $(I_1: I_2) := \{ x \in \mathcal{Q}^{\gp}~|~ x I_2 \subseteq I_1 \}$.

\begin{lemma}
Let $\mathcal{Q}$ be a cancellative monoid, and let $I_1$ and $I_2$ be fractional ideals.
Then $(I_1: I_2)$ is also a fractional ideal.
\end{lemma}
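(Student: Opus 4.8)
The plan is to verify the two defining conditions of a fractional ideal directly: that $(I_1:I_2)$ is a nonempty $\mathcal{Q}$-submodule of $\mathcal{Q}^{\gp}$, and that $x(I_1:I_2)\subseteq\mathcal{Q}$ for some $x\in\mathcal{Q}$. The only facts used are that a $\mathcal{Q}$-module $M\subseteq\mathcal{Q}^{\gp}$ satisfies $q+M\subseteq M$ for all $q\in\mathcal{Q}$, that $\mathcal{Q}$ absorbs addition by its own elements, and the existence of the "denominators" of $I_1$ and $I_2$ furnished by the hypothesis that they are fractional.

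First, the submodule property: if $y\in(I_1:I_2)$ and $q\in\mathcal{Q}$, then $(q+y)+I_2=q+(y+I_2)\subseteq q+I_1\subseteq I_1$ since $I_1$ is a $\mathcal{Q}$-module, so $q+y\in(I_1:I_2)$; and $(I_1:I_2)\subseteq\mathcal{Q}^{\gp}$ by definition. For nonemptiness, choose $a\in\mathcal{Q}$ with $a+I_2\subseteq\mathcal{Q}$ (possible since $I_2$ is fractional) and $c\in I_1$ (possible since $I_1$ is nonempty), and set $x:=a+c\in\mathcal{Q}^{\gp}$. Then $x+I_2=c+(a+I_2)\subseteq c+\mathcal{Q}\subseteq I_1$, the last inclusion because $c\in I_1$ and $I_1$ is a $\mathcal{Q}$-module; hence $x\in(I_1:I_2)$, so $(I_1:I_2)\neq\emptyset$.

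It remains to produce a denominator lying in $\mathcal{Q}$. Choose $e\in\mathcal{Q}$ with $e+I_1\subseteq\mathcal{Q}$, an element $f\in I_2$, and $a\in\mathcal{Q}$ with $a+I_2\subseteq\mathcal{Q}$ as above, so that $a+f\in\mathcal{Q}$. Put $d:=a+e+f=(a+f)+e\in\mathcal{Q}$. For any $y\in(I_1:I_2)$ we have $y+f\in y+I_2\subseteq I_1$, hence $e+f+y\in e+I_1\subseteq\mathcal{Q}$, and therefore $d+y=a+(e+f+y)\in a+\mathcal{Q}\subseteq\mathcal{Q}$. Thus $d+(I_1:I_2)\subseteq\mathcal{Q}$, which finishes the verification. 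The argument is purely formal; the one point needing attention is the bookkeeping between $\mathcal{Q}$ and $\mathcal{Q}^{\gp}$, since the denominator $d$ must be a genuine element of $\mathcal{Q}$, which is why the element $f\in I_2$ must be absorbed using the denominator $a$ of $I_2$ rather than left as a group element. I expect no obstacle beyond this.
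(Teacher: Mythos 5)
Your proof is correct and follows essentially the same route as the paper: the denominator for $(I_1:I_2)$ is built from a denominator of $I_1$ plus an element of $I_2$. In fact your version is slightly more careful than the paper's, which takes $a_1$ with $a_1I_1\subseteq\mathcal{Q}$ and $a\in I_2$ and uses $a_1+a$ as the denominator without noting that $a\in I_2\subseteq\mathcal{Q}^{\gp}$ may force $a_1+a\notin\mathcal{Q}$; your extra summand $a$ (a denominator of $I_2$) repairs this, and you also verify the nonemptiness and $\mathcal{Q}$-submodule conditions that the paper leaves implicit.
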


\begin{proof}
Let $a_1 \in \mathcal{Q}^{\gp}$ such that $a_1I_1 \subseteq \mcQ$.
Pick an element $a \in I_2$.
For any $z \in (I_1 : I_2)$, $az \in I_1$.
Thus $a_1 az \in a_1 I_1 \subseteq \mcQ$.
This implies $a_1 a (I_1 : I_2) \subseteq \mcQ$, as desired.
\end{proof}

For a fractional ideal $I$ of $\mcQ$, we set $I^{-1} := (P:I)$ and $I^* := (I^{-1})^{-1}$.
We say that a fractional ideal $I$ is \textit{divisorial} (or \textit{$v$-ideal}) if $I^* = I$ holds.

\begin{lemma}
Let $\mathcal{Q}$ be a cancellative monoid, and let $I$ and $J$ be fractional ideals.
Then the following assertions hold.
\begin{enumerate}
\item
If $I \subseteq J$, then $J^{-1} \subseteq I^{-1}$ and $I^* \subseteq J^*$ hold.
\item
$I \subseteq I^*$ holds.
\item 
$I^*$ is divisorial.
Especially, $I^*$ is the smallest divisorial ideal containing $I$.
\item
For any $a \in \mathcal{Q}^{\gp}$, $aI^{-1} = (a^{-1}I)^{-1}$ and $aI^* = (aI)^*$ hold.
\item
$(IJ)^* = (I^*J^*)^*$ holds.

\end{enumerate}
\end{lemma}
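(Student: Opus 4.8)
The plan is to prove the five items in the stated order, since each relies on the earlier ones, using throughout only the definitions of $(I_1:I_2)$, of $I^{-1}=(\mcQ:I)$, and of $I^{*}=(I^{-1})^{-1}$, together with the one elementary observation that $I\mapsto I^{-1}$ reverses inclusions. For (1) I would argue directly: if $I\subseteq J$ and $x\in J^{-1}$, then $xI\subseteq xJ\subseteq\mcQ$, so $x\in I^{-1}$; thus $J^{-1}\subseteq I^{-1}$, and feeding this back into the same observation gives $I^{*}=(I^{-1})^{-1}\subseteq(J^{-1})^{-1}=J^{*}$. For (2), given $a\in I$ and any $x\in I^{-1}$ we have $xa\in xI\subseteq\mcQ$; since $x$ was arbitrary, $aI^{-1}\subseteq\mcQ$, i.e.\ $a\in(\mcQ:I^{-1})=I^{*}$.

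For (3) the substantive point is that $I^{*}$ is divisorial, i.e.\ $(I^{*})^{*}=I^{*}$. Here $I^{*}\subseteq(I^{*})^{*}$ is just (2) applied to $I^{*}$, so the content is the reverse inclusion, which I would deduce from the identity $(J^{*})^{-1}=J^{-1}$, valid for every fractional ideal $J$. One inclusion of this identity is (1) applied to $J\subseteq J^{*}$; the other is (2) applied to the fractional ideal $J^{-1}$, which gives $J^{-1}\subseteq(J^{-1})^{*}=(J^{*})^{-1}$. Granting the identity, $(I^{*})^{*}=((I^{*})^{-1})^{-1}=(I^{-1})^{-1}=I^{*}$. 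The ``smallest'' clause is then immediate: if $J$ is a divisorial fractional ideal with $I\subseteq J$, then (1) gives $I^{*}\subseteq J^{*}=J$.

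Item (4) is translation-equivariance of all the constructions, and I expect it to be routine. Unwinding definitions, $x\in aI^{-1}$ holds iff $a^{-1}x\in I^{-1}$, iff $x(a^{-1}I)\subseteq\mcQ$, iff $x\in(a^{-1}I)^{-1}$; hence $aI^{-1}=(a^{-1}I)^{-1}$. Applying this twice, once to $I$ and once to $I^{-1}$ with $a^{-1}$ in place of $a$, yields $aI^{*}=a(I^{-1})^{-1}=(a^{-1}I^{-1})^{-1}=((aI)^{-1})^{-1}=(aI)^{*}$.

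The main obstacle is item (5), and specifically the reverse inclusion. The easy direction follows since $IJ\subseteq I^{*}J^{*}$ by (2), so $(IJ)^{*}\subseteq(I^{*}J^{*})^{*}$ by (1). For the other direction I would isolate the intermediate claim that $A^{*}B\subseteq(AB)^{*}$ for all fractional ideals $A,B$, proved as follows: fix $a\in A^{*}$ and $b\in B$; for any $x\in(AB)^{-1}$ we have $(x+b)+A\subseteq x+(AB)\subseteq\mcQ$ since $b+A\subseteq AB$, hence $x+b\in A^{-1}$, and therefore $a+(x+b)\in\mcQ$ because $a\in(A^{-1})^{-1}$; as $x$ ranges over $(AB)^{-1}$ this shows $a+b\in((AB)^{-1})^{-1}=(AB)^{*}$. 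Applying the claim to $(A,B)=(I,J)$ gives $I^{*}J\subseteq(IJ)^{*}$; applying it again to $(A,B)=(J,I^{*})$ gives $J^{*}I^{*}\subseteq(JI^{*})^{*}$, and since $JI^{*}=I^{*}J\subseteq(IJ)^{*}$ and $(IJ)^{*}$ is divisorial by (3), monotonicity (1) yields $(JI^{*})^{*}\subseteq((IJ)^{*})^{*}=(IJ)^{*}$. Hence $I^{*}J^{*}\subseteq(IJ)^{*}$, and one more application of $*$ together with (1) and (3) gives $(I^{*}J^{*})^{*}\subseteq(IJ)^{*}$, which combined with the easy inclusion completes the equality.
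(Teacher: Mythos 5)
Your proof is correct. Items (1), (2) and (4) coincide with the paper's own arguments, and for (3) you supply in full the proof that the paper delegates to Samuel's lecture notes, via the identity $(J^{*})^{-1}=J^{-1}$, which is exactly the standard route. The genuine divergence is in (5). The paper argues through the chain $(I^{*}J^{*})^{*}=I^{*}J^{*}=IJ^{*}=(IJ)^{*}$, justifying each equality by a pointwise computation such as ``$(aJ^{*})^{*}=aJ^{*}$ for each $a\in I^{*}$, hence $(I^{*}J^{*})^{*}=I^{*}J^{*}$''. But $I^{*}J^{*}$ is the \emph{union} of the translates $aJ^{*}$, and a union of divisorial ideals need not be divisorial, so these intermediate equalities do not follow from the pointwise facts cited; indeed the assertion that $I^{*}J^{*}$ is divisorial is false in general (otherwise the $v$-product $\bullet$ on $\Div(\mcQ)$ would coincide with the ordinary product). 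Your route instead isolates the containment $A^{*}B\subseteq(AB)^{*}$, proves it directly from the definitions, and bootstraps it twice to obtain $I^{*}J^{*}\subseteq(IJ)^{*}$, finishing with monotonicity and idempotence of $*$ from (1) and (3). This is the classical Bourbaki--Fossum argument; it uses only correct intermediate statements and in fact repairs the gap in the paper's own proof of (5), at the modest cost of one auxiliary lemma.
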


\begin{proof}
(1):
Let $a \in \mcQ^{\gp}$ such that $a J \subseteq \mcQ$.
Since $I \subseteq J$, we have $aI \subseteq aJ \subseteq \mcQ$, as desired.
The latter assertion follows from the former assertion.

(2):
Pick $a \in I$.
For any $z \in (\mcQ: I)$, $zI \subseteq \mathcal{Q}$, in particular $za \in \mcQ$.
Thus $a \in (\mcQ : (\mcQ : I))$.

(3):
This is the same proof as in \cite[Lemma 1.2 (1)]{SamBook}.

(4):
The inclusion $aI^{-1} \subseteq (a^{-1}I)^{-1}$ obviously holds.
Conversely, pick an element $z \in (a^{-1}I)^{-1}$.
Then we have $(a^{-1}z) I = z (a^{-1}I) \subseteq \mcQ$.
This implies that $z \in aI^{-1}$, as desired.
Next, by the former equality, we obtain $aI^* = (a^{-1}I^{-1})^{-1} = (((a^{-1})^{-1}I)^{-1})^{-1} = (aI)^*$.

(5):
Pick $a \in I^*$.
Then $(aJ^*)^* = a(J^*)^* =aJ^*$.
This implies that $(I^* J^*)^* = I^*J^*$.
Pick $b \in J^*$.
Then $I^*b = (Ib)^*$.
This implies $I^*J^* = (IJ^*)^*$.
Finally, pick $c \in I$.
Then $(cJ^*)^* = (cJ)^{**} = (cJ)^*$.
This implies that $(IJ^*)^* = (IJ)^*$.
To summarize these, we obtain $(I^*J^*)^* = I^*J^* = (IJ^*)^* = (IJ)^*$, as desired.
\end{proof}

\begin{definition}
Let $\mathcal{Q}$ be a cancellative monoid.
We denote by $\Div(\mathcal{Q})$ the set of all divisorial ideals of $\mcQ$.
We define a binary operation on $\Div(\mathcal{Q})$ by
$$
I \bullet J := (IJ)^*.
$$
\end{definition}

Note that a monoid $\mcQ$ is a divisorial ideal.
Moreover, for a divisorial ideal $I$, we have $\mcQ \bullet I = I \bullet \mcQ = I$.
Hence $(\Div(\mcQ), \bullet)$ is a monoid.
In order to discuss the conditions under which $\Div(\mcQ)$ becomes a group, we define a monoid to be completely integrally closed.

\begin{definition}
Let $\mcQ$ be a cancellative monoid.
\begin{enumerate}
    \item 
    An element $x \in \mcQ^{\gp}$ is called \textit{almost integral over $\mcQ$} if there exists $c \in \mcQ$ such that $c+nx \in \mcQ$ for any $n\in\mathbb{Z}_{> 0}$.
    \item
    $\mcQ$ is called \textit{completely integrally closed} if all almost integral elements over $\mcQ$ lie in $\mcQ$.
\end{enumerate}
\end{definition}

The set of elements of $\mcQ^{\gp}$ which are almost integral over $\mcQ$ is a monoid.
Indeed, for almost integral elements $x, y \in \mcQ^{\gp}$, there exist elements $a,b \in \mcQ$ such that $a+nx, b+ny \in \mcQ$ for any $n \in \mathbb{Z}_{>0}$.
Since we have $(a+b) + n(x+y) = (a+nx)+(b+ny) \in \mcQ$, $x+y$ is also almost integral over $\mcQ$.

\begin{proposition}
Let $\mcQ$ be a cancellative monoid.
Then the following assertions hold.
\begin{enumerate}
\item
$(\Div(\mcQ), \bullet)$ is an abelian group if and only if $\mcQ$ is completely integrally closed.
\item
If $\mcQ$ is finitely generated, cancellative, and root closed, then $\mcQ$ is completely integrally closed.
\end{enumerate}
\end{proposition}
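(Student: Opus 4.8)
The plan is as follows. For part~(1), since it has already been observed that $(\Div(\mcQ),\bullet)$ is a commutative monoid with identity element $\mcQ$, only the invertibility of its elements is in question. Assume first that $(\Div(\mcQ),\bullet)$ is a group and let $x\in\mcQ^{\gp}$ be almost integral over $\mcQ$, say $c+nx\in\mcQ$ for all $n\geq 1$ with $c\in\mcQ$. The idea is to repackage $x$ as the fractional ideal $I:=\sum_{n\geq 0}(nx+\mcQ)$, the $\mcQ$-submodule of $\mcQ^{\gp}$ generated by $\{nx\}_{n\geq 0}$; it is a fractional ideal because $c+I\subseteq\mcQ$, and by construction $x+I\subseteq I$. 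Its divisorial closure $I^{*}$ lies in $\Div(\mcQ)$, so by hypothesis there is $J\in\Div(\mcQ)$ with $(I^{*}J)^{*}=\mcQ$. Using the identity $a+K^{*}=(a+K)^{*}$ and the monotonicity of $(-)^{*}$ from the lemma on divisorial ideals above, the stability relation $x+I\subseteq I$ propagates to $x+I^{*}\subseteq I^{*}$, then to $x+I^{*}J\subseteq I^{*}J$, and finally to $x+\mcQ=x+(I^{*}J)^{*}\subseteq(I^{*}J)^{*}=\mcQ$; hence $x=x+0\in\mcQ$, so $\mcQ$ is completely integrally closed.

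For the converse, assume $\mcQ$ is completely integrally closed and fix $I\in\Div(\mcQ)$; I claim its inverse is $I^{-1}$, which is itself a divisorial ideal and hence lies in $\Div(\mcQ)$. Since $II^{-1}\subseteq\mcQ$ we get $(II^{-1})^{*}\subseteq\mcQ$; because $(II^{-1})^{*}=((II^{-1})^{-1})^{-1}$ and $\mcQ^{-1}=(\mcQ:\mcQ)=\mcQ$, it is enough to show $(II^{-1})^{-1}\subseteq\mcQ$. A short computation with colon ideals gives $(II^{-1})^{-1}=(I^{-1}:I^{-1})$, so let $z\in(I^{-1}:I^{-1})$; then $nz+I^{-1}\subseteq I^{-1}$ for all $n\geq 0$, and picking $a\in I^{-1}$ together with $c\in\mcQ$ such that $c+I^{-1}\subseteq\mcQ$, the element $c+a\in\mcQ$ witnesses that $z$ is almost integral over $\mcQ$; hence $z\in\mcQ$. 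Therefore $I\bullet I^{-1}=\mcQ$, every element of $\Div(\mcQ)$ is invertible, and $(\Div(\mcQ),\bullet)$ is a group.

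For part~(2), I would first reduce to the case where $\mcQ$ is reduced. Since $\mcQ$ is finitely generated and cancellative, so is $\sharQ$, its associated reduced monoid, and Lemma~\ref{decompmon} yields $\mcQ\cong\sharQ\times\mcQ^{\times}$. Both ``root closed'' and ``completely integrally closed'' hold for a finite product of cancellative monoids precisely when they hold for each factor, and a group satisfies both trivially; so it suffices to treat $\sharQ$, that is, we may assume $\mcQ$ is reduced. Then root closedness forces $\mcQ^{\gp}$ to be torsion-free, since a torsion element $t\in\mcQ^{\gp}$ satisfies $mt=0\in\mcQ$ for some $m\geq 1$, whence $t\in\mcQ$ and likewise $-t\in\mcQ$, so $t\in\mcQ^{\times}=0$. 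By the second assertion of Lemma~\ref{transformation} we may therefore regard $\mcQ$ as a submonoid of $\mathbb{N}^{r}$, so that $\mcQ^{\gp}\subseteq\mathbb{Z}^{r}$; let $C\subseteq\mathbb{R}^{r}$ be the cone spanned by $\mcQ$, which, $\mcQ$ being finitely generated, is a rational polyhedral cone and in particular closed. Now if $x\in\mcQ^{\gp}$ is almost integral with witness $c\in\mcQ$, then $\tfrac1n c+x=\tfrac1n(c+nx)\in C$ for every $n\geq 1$, and letting $n\to\infty$ and using that $C$ is closed gives $x\in C$; expressing $x$ along linearly independent integral generators of $C$ (Carath\'eodory), the coefficients are rational, so $Nx\in\mcQ$ for some $N\geq 1$, and root closedness yields $x\in\mcQ$. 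Hence $\mcQ$ is completely integrally closed.

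The only step I expect to be genuinely non-routine is the forward implication of part~(1): recognizing that an almost integral element $x$ should be encoded as the fractional ideal $I=\sum_{n\geq 0}(nx+\mcQ)$, and that the relation $x+I\subseteq I$ survives passage to $I^{*}$ and multiplication by an inverse of $I^{*}$, which forces $x+\mcQ\subseteq\mcQ$. The converse is a standard manipulation of colon and divisorial ideals, and part~(2) is the familiar convex geometry of finitely generated rational cones combined with the product decomposition of $\mcQ$.
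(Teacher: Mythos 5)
Your argument is correct. The paper itself does not prove this proposition; it simply cites \cite[Proposition 6.4.42 (i), (ii)]{GRBook}, so your self-contained proof is necessarily a different route, and it is a good one. For (1), encoding an almost integral element $x$ as the fractional ideal $I=\bigcup_{n\ge 0}(nx+\mcQ)$ and pushing the stability relation $x+I\subseteq I$ through $(-)^{*}$ and through multiplication by an inverse of $I^{*}$ is exactly the standard mechanism (it is the monoid analogue of the Krull-domain argument in Bourbaki/Fossum), and the converse via $(II^{-1})^{-1}=(I^{-1}:I^{-1})$ together with complete integral closedness is likewise the classical computation; both directions use only the lemma on divisorial ideals already established in the paper. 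Two small points you should make explicit: that $I^{-1}$ is itself divisorial (the one-line check $(J^{-1})^{**}=(J^{**})^{-1}\subseteq J^{-1}\subseteq (J^{-1})^{**}$, or alternatively take $(I^{-1})^{*}$ as the candidate inverse and invoke item (5) of the lemma), and that $\mcQ^{-1}=\mcQ$ because $0\in\mcQ$. For (2), your reduction to the reduced case could be replaced by a direct appeal to Lemma \ref{LemmaCIC}(1) of the paper, which says complete integral closedness passes between $\mcQ$ and $\mcQ_{\redu}$; after that, the torsion-freeness observation, the embedding into $\mathbb{N}^{r}$ via Lemma \ref{transformation}, and the closed-cone limit argument $\tfrac1n c+x\to x$ followed by Carath\'eodory and root closedness are all sound. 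What your approach buys is a proof readable without access to \cite{GRBook}; what the citation buys is brevity and a statement in the generality Gabber--Ramero work in.
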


\begin{proof}
    These assertions are \cite[Proposition 6.4.42 (i), (ii)]{GRBook}.
\end{proof}


Next, we define Krull monoids.

\begin{definition}[Krull monoids]
Let $\mathcal{Q}$ be a cancellative monoid.
Then $\mcQ$ is a \textit{Krull monoid} if the following two conditions hold.
\begin{enumerate}
\item
The set of divisorial ideals of $\mcQ$ contained in $\mcQ$ satisfies the ascending chain condition, that is, for any sequence $I_0 \subseteq I_1 \subseteq I_2 \subseteq \cdots$ of divisorial ideals, there exists a number $n \geq 0$ such that $I_m = I_{m+1}$ for any $m \ge n$.

\item
$\mcQ$ is completely integrally closed.

\end{enumerate}
\end{definition}

\begin{lemma}\label{LemmaCIC}
    Let $\mcQ$ be a cancellative monoid.
    Then the following assertions hold.
    \begin{enumerate}
        \item 
        $\mcQ$ is completely integrally closed if and only if $\mcQ_{\redu}$ is completely integrally closed.
        \item
        $\mcQ$ is a Krull monoid if and only if $\mcQ_{\redu}$ is a Krull monoid.
    \end{enumerate}
\end{lemma}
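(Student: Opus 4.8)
The plan is to push everything through the quotient homomorphism $\pi\colon\mcQ^{\gp}\to\mcQ^{\gp}/\mcQ^{\times}$. First I would observe that $\pi$ restricts to a surjection $\mcQ\to\mcQ_{\redu}$ identifying $\mcQ_{\redu}$ with the submonoid $\pi(\mcQ)\subseteq\mcQ^{\gp}/\mcQ^{\times}$; in particular $\mcQ_{\redu}$ is again cancellative and $\mcQ_{\redu}^{\gp}\cong\mcQ^{\gp}/\mcQ^{\times}$, so the statement makes sense. Then I would record two facts used throughout: because $\mcQ^{\times}\subseteq\mcQ$, the set $\mcQ$ and every fractional ideal $I$ of $\mcQ$ is stable under translation by $\mcQ^{\times}$, hence is a union of $\pi$-fibres and satisfies $I=\pi^{-1}(\pi(I))$; and $\pi^{-1}(\mcQ_{\redu})=\mcQ$. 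It follows that $\pi$ and $\pi^{-1}$ induce mutually inverse, inclusion-preserving bijections between the fractional ideals of $\mcQ$ and those of $\mcQ_{\redu}$, restricting to a bijection between the fractional ideals contained in $\mcQ$ and those contained in $\mcQ_{\redu}$.

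For (1), suppose $\mcQ$ is completely integrally closed and let $[x]\in\mcQ_{\redu}^{\gp}$ be almost integral over $\mcQ_{\redu}$, witnessed by $[c]$ with $c\in\mcQ$; choosing a lift $x\in\mcQ^{\gp}$ and using $\pi^{-1}(\mcQ_{\redu})=\mcQ$, the relations $[c]+n[x]\in\mcQ_{\redu}$ give $c+nx\in\mcQ$ for all $n>0$, so $x\in\mcQ$ and hence $[x]\in\mcQ_{\redu}$. Conversely, suppose $\mcQ_{\redu}$ is completely integrally closed and let $x\in\mcQ^{\gp}$ be almost integral over $\mcQ$; applying $\pi$ to the defining relations shows $\pi(x)$ is almost integral over $\mcQ_{\redu}$, so $\pi(x)\in\mcQ_{\redu}=\pi(\mcQ)$ and therefore $x\in\pi^{-1}(\mcQ_{\redu})=\mcQ$.

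For (2), by (1) it remains to transport the ascending chain condition on divisorial ideals contained in $\mcQ$ to $\mcQ_{\redu}$ and back. The crucial point is that $\pi$ is compatible with the operation $I\mapsto I^{-1}=(\mcQ:I)$: the inclusion $\pi(I^{-1})\subseteq\pi(I)^{-1}$ is immediate, and conversely any lift $y\in\mcQ^{\gp}$ of an element of $\pi(I)^{-1}$ satisfies $\pi(y+I)\subseteq\mcQ_{\redu}$, hence $y+I\subseteq\pi^{-1}(\mcQ_{\redu})=\mcQ$, i.e.\ $y\in I^{-1}$; thus $\pi(I^{-1})=\pi(I)^{-1}$. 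Iterating gives $\pi(I^{*})=\pi(I)^{*}$, and since $I^{*}$ is a fractional ideal it equals $\pi^{-1}(\pi(I^{*}))=\pi^{-1}(\pi(I)^{*})$, so $I^{*}=I$ if and only if $\pi(I)^{*}=\pi(I)$. Hence the bijection from the first paragraph restricts to an inclusion-preserving bijection between divisorial ideals of $\mcQ$ contained in $\mcQ$ and divisorial ideals of $\mcQ_{\redu}$ contained in $\mcQ_{\redu}$, so the two chain conditions are equivalent; combining with (1) yields that $\mcQ$ is a Krull monoid if and only if $\mcQ_{\redu}$ is.

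The only slightly delicate step is the identity $\pi(I^{-1})=\pi(I)^{-1}$ and, with it, the fact that $\pi$ carries divisorial ideals to divisorial ideals; everything else is bookkeeping with $\mcQ^{\times}$-cosets, and I do not anticipate a genuine obstacle.
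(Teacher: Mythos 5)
Your argument is correct. Note, however, that the paper does not actually prove this lemma: it simply cites \cite[Corollary~2.3.6]{GFBook}, so your write-up is a genuinely self-contained replacement rather than a variant of an in-text argument. The route you take is the natural one and all the key steps check out: every fractional ideal of $\mcQ$ is a $\mcQ$-module, hence stable under translation by the group $\mcQ^{\times}$, hence a union of fibres of $\pi\colon \mcQ^{\gp}\to\mcQ^{\gp}/\mcQ^{\times}$ and recoverable as $\pi^{-1}(\pi(I))$; together with $\pi^{-1}(\mcQ_{\redu})=\mcQ$ this gives the inclusion-preserving bijection of fractional ideals, the transfer of almost integrality in both directions for (1), and the identity $\pi\bigl((\mcQ:I)\bigr)=\bigl(\mcQ_{\redu}:\pi(I)\bigr)$ that makes divisoriality and the ascending chain condition correspond for (2). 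The only points worth making explicit in a final write-up are the ones you already flag implicitly: that $\mcQ_{\redu}\cong\pi(\mcQ)$ is cancellative with $\mcQ_{\redu}^{\gp}\cong\mcQ^{\gp}/\mcQ^{\times}$ (so that ``completely integrally closed'' and ``Krull'' are meaningful for it), and that the witness $[c]$ of almost integrality over $\mcQ_{\redu}$ can be lifted to an element of $\mcQ$ because $\pi$ is surjective on the monoids themselves. What your approach buys over the paper's citation is a short, elementary, and fully verifiable proof using only the definitions already set up in Section~2; what it costs is nothing beyond the bookkeeping you describe.
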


\begin{proof}
    These are \cite[Corollary 2.3.6]{GFBook}.
\end{proof}

Krull monoids possess many properties similar to those of Krull rings.
In particular, Proposition \ref{LatticeDivprop} is important for providing concrete computations of divisor class groups.

\begin{proposition}\label{LatticeDivprop}
Let $\mcQ$ be a cancellative monoid and let $D \subseteq \Spec (\mcQ)$ be the subset of all prime ideals of height one.
Then $\mcQ$ is Krull if and only if there is an isomorphism $\mathbb{Z}^{\oplus D} \cong \Div (\mcQ)$ as an abelian group.
\end{proposition}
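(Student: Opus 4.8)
The plan is to prove the two implications separately, using the structure of divisorial ideals established in the preceding lemmas. The key point on both sides is that for a Krull monoid, the divisorial ideals should factor uniquely (under the $\bullet$-operation) into a product of the height-one primes, so that $\Div(\mcQ)$ is the free abelian group on $D$.

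For the forward direction, assume $\mcQ$ is Krull. By Lemma \ref{LemmaCIC}(2) we may replace $\mcQ$ by $\mcQ_{\redu}$ and thus assume $\mcQ$ is sharp; note this does not change $\Spec(\mcQ)$ (the $\simeq$-classes of prime ideals coincide with prime ideals of $\mcQ_{\redu}$) nor $\Div(\mcQ)$ up to isomorphism as a group. Since $\mcQ$ is completely integrally closed, Proposition preceding this one gives that $(\Div(\mcQ),\bullet)$ is an abelian group, so it suffices to produce a basis. First I would show that every $\fp \in D$ is a divisorial ideal: a height-one prime is a minimal nonempty $s$-ideal, so $\fp^*$ is a divisorial $s$-ideal contained in $\mcQ$ with $\fp \subseteq \fp^* \subsetneq \mcQ$, forcing $\fp^* = \fp$ by minimality. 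Next, using the ascending chain condition on divisorial ideals together with complete integral closedness, I would establish that every divisorial ideal $I \subseteq \mcQ$ satisfies $I = \bigcap_{\fp \in D,\ I \subseteq \fp} \fp^{(n_\fp)}$ for suitable multiplicities $n_\fp \geq 0$, where $\fp^{(n)}$ denotes the $n$-th symbolic-type power (the divisorial closure of the $\bullet$-power), and that this expression is unique. This is the monoid analogue of the fact that a Krull domain is the intersection of its height-one localizations, each of which is a DVR; the localization $\mcQ_\fp$ at a height-one prime is a valuation monoid with value group $\mathbb{Z}$, and reading off the valuations $v_\fp(I) = n_\fp$ gives a well-defined group homomorphism $\Div(\mcQ) \to \mathbb{Z}^{\oplus D}$, $I \mapsto (v_\fp(I))_\fp$, with inverse $(n_\fp)_\fp \mapsto \bigcap_\fp \fp^{(n_\fp)}$. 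The finiteness of the support (so that we land in the direct sum, not the product) comes from the ascending chain condition: any $I \subseteq \mcQ$ has only finitely many minimal overprimes among $D$.

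For the converse, assume $\mathbb{Z}^{\oplus D} \cong \Div(\mcQ)$ as abelian groups. Since $\Div(\mcQ)$ is then in particular a group, the earlier Proposition gives that $\mcQ$ is completely integrally closed, which is condition (2) of being Krull. For condition (1), the ascending chain condition on divisorial $s$-ideals: a strictly ascending chain $I_0 \subsetneq I_1 \subsetneq \cdots$ of divisorial ideals contained in $\mcQ$ would correspond, under the isomorphism and after identifying each $I_j$ with its image, to a strictly decreasing chain of elements in the positive cone $\mathbb{N}^{\oplus D}$ of the group $\mathbb{Z}^{\oplus D}$ (larger ideal means smaller valuation vector coordinatewise, and the containments are strict hence at least one coordinate strictly drops at each step); since each coordinate is a nonnegative integer and only finitely many are nonzero, such a chain stabilizes. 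Here I must be careful that the group isomorphism is compatible with the partial order given by reverse inclusion of ideals — this requires knowing that $I \subseteq J \iff J^{-1} \subseteq I^{-1}$ (proved above) translates into the componentwise order on $\mathbb{Z}^{\oplus D}$, which in turn uses that the isomorphism sends divisorial ideals contained in $\mcQ$ precisely to $\mathbb{N}^{\oplus D}$; establishing this order-compatibility is the delicate bookkeeping step.

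The main obstacle I anticipate is the forward direction, specifically proving the unique factorization $I = \bigcap_{\fp} \fp^{(n_\fp)}$ of divisorial ideals into height-one primes: one needs to localize monoids at primes and verify that a completely integrally closed monoid with a.c.c.\ on divisorial ideals localizes to a discrete valuation monoid at each height-one prime, and then patch the local data. This is essentially transporting the Mori--Nagata / Krull theory from rings to monoids; while all ingredients are standard, making the primary decomposition argument run cleanly with the $\bullet$-operation (rather than ordinary ideal products) is where the real work lies. In the writeup I would either cite \cite{GRBook} or \cite{GFBook} for the monoid primary decomposition and keep the bijection-with-$\mathbb{Z}^{\oplus D}$ explicit, or reproduce the short localization argument if the cited references only state it for reduced Krull monoids.
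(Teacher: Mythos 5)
Your overall route is the one the paper itself intends: the paper's ``proof'' is a one-line citation of Samuel's Theorem 3.1, i.e.\ the classical Krull-domain argument (essential valuations at the height-one primes, unique factorization of divisorial ideals under $\bullet$) transported verbatim to monoids, and that is exactly the skeleton you describe. So there is no divergence of method. There is, however, one step that fails as written. You claim that a height-one prime $\fp$ is ``a minimal nonempty $s$-ideal'' and deduce $\fp^*=\fp$ ``by minimality.'' Both halves are wrong: height-one primes are minimal among nonempty \emph{prime} ideals, not among nonempty $s$-ideals --- already in $\mcQ=\mathbb{N}$ the unique height-one prime $1+\mathbb{N}$ properly contains the $s$-ideal $2+\mathbb{N}$ --- and even if $\fp$ were minimal, that would only rule out $s$-ideals strictly \emph{below} $\fp$, whereas $\fp^*$ sits \emph{above} $\fp$, so minimality gives no control on $\fp^*$. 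The divisoriality of height-one primes is a genuine lemma here; the standard proof (as in Samuel) goes through showing that maximal proper divisorial $s$-ideals are prime and that, under the a.c.c.\ together with complete integral closedness, every height-one prime arises as such a maximal divisorial ideal (equivalently, one reads it off from the discrete valuation attached to $\fp$). You need to supply that argument rather than the minimality shortcut.

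A smaller point: in the converse direction you correctly observe that an abstract group isomorphism $\mathbb{Z}^{\oplus D}\cong\Div(\mcQ)$ carries no order information, but you leave the resolution as ``delicate bookkeeping.'' It is not bookkeeping --- an arbitrary isomorphism need not send $\mathbb{N}^{\oplus D}$ to the divisorial ideals contained in $\mcQ$, and the a.c.c.\ argument collapses without that. The statement has to be read, as the paper's own notation $\ddiv:\mathbb{Z}^{\oplus D}\xrightarrow{\cong}\Div(\mcQ)$ indicates, as asserting that the \emph{canonical} map $(n_\fp)_\fp\mapsto\bigl(\text{the divisorial closure of the corresponding $\bullet$-product}\bigr)$ is an isomorphism; your writeup should fix that map at the outset and prove bijectivity for it.
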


\begin{proof}
The proof is the same as in \cite[Theorem 3.1]{SamBook}.
\end{proof}

Krull monoids have many other characterizations. See \cite[Theorem 2.3.11]{GFBook} and \cite[Theorem 2.4.8]{GFBook} for details.

Keep the notation as in Proposition \ref{LatticeDivprop}.
Let us denote $(n_\fp)_{\fp \in D} \in \mathbb{Z}^{\oplus D}$ by $\sum_{\fp \in D} n_\fp \fp$.
Also let us denote $\ddiv : \mathbb{Z}^{\oplus D} \xrightarrow{\cong} \Div(\mcQ)$.

\begin{definition}
Let $\mcQ$ be a cancellative monoid and let $a \in \mcQ^{\gp}$ be an element.
Then we define a \textit{principal fractional ideal} as $\{ a+q ~|~q\in\mcQ$\}.
Moreover, we denote the set of principal fractional ideals by $\Prin (\mcQ)$.
\end{definition}

Let $\mcQ$ be a cancellative monoid and let $I, J$ be fractional ideals of $\mcQ$.
Here we define $I \sim J$ if there exists an element $a\in \mcQ^{\gp}$ such that $I =  a J$.
Then $\sim$ is an equivalence relation.

\begin{definition}[The divisor class groups of monoids]
Let $\mcQ$ be a cancellative monoid.
Then we define \textit{the divisor class group of $\mcQ$} as $\Div(\mcQ)/\sim $ and denote it by $\Cl(\mcQ)$.
\end{definition}

For a cancellative monoid $\mathcal{Q}$, $\Cl(\mcQ)$ is a monoid (its binary operation is induced by that of $\Div(\mcQ)$).
Furthermore, if $\mcQ$ is completely integrally closed, then $\Cl(\mcQ)$ is an abelian group.

Here assume that $\mcQ$ is a Krull monoid.
Let $\fp \in \Spec(Q)$ be a height one prime ideal of $\mcQ$.
If $\fp$ is a principal ideal, then $\ddiv(\fp)$ is contained in a principal fractional ideal of $\mcQ$ by Proposition  \ref{LatticeDivprop}.
Hence we obtain $\ddiv^{-1}(\Prin(Q)) = \{  \sum_{\Ht\fp = 1} n_\fp \fp \in \mathbb{Z}^D ~|~ \fp\text{ is principal} \}$ and 
\begin{equation}
\overline{\ddiv} : \mathbb{Z}^D/\ddiv^{-1}(\Prin(Q)) \xrightarrow{\cong} \Cl(\mcQ).
\end{equation}
By this isomorphism, we obtain the following result.

\begin{corollary}
    Let $\mcQ$ be a Krull monoid.
    Then the following assertions are equivalent.
    \begin{enumerate}
        \item $\Cl(\mcQ) = 0$.
        \item Any height one prime ideal of $\mcQ$ is principal.
    \end{enumerate}
\end{corollary}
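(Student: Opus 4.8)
The plan is to read off both implications from the isomorphism
\[
\overline{\ddiv} : \mathbb{Z}^D / \ddiv^{-1}(\Prin(\mcQ)) \xrightarrow{\cong} \Cl(\mcQ)
\]
recorded just before the statement, after isolating one bookkeeping fact: under the isomorphism $\ddiv : \mathbb{Z}^{\oplus D} \xrightarrow{\cong} \Div(\mcQ)$ of Proposition \ref{LatticeDivprop}, the standard basis vector indexed by a height one prime $\fp \in D$ is carried to the divisorial ideal $\fp$ itself (every height one prime of a Krull monoid being divisorial). Consequently, for $\fp \in D$ the basis vector $\fp \in \mathbb{Z}^{\oplus D}$ lies in $\ddiv^{-1}(\Prin(\mcQ))$ if and only if $\fp$ is a principal fractional ideal; and since $\fp \subseteq \mcQ$, such a principal fractional ideal is automatically generated by an element of $\mcQ$, so ``principal'' can be read in either sense.

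For $(2) \Rightarrow (1)$ I would argue as follows. Assume every height one prime of $\mcQ$ is principal. Then each standard basis vector of $\mathbb{Z}^{\oplus D}$ lies in $\ddiv^{-1}(\Prin(\mcQ))$ by the remark above; as these vectors generate $\mathbb{Z}^{\oplus D}$, we get $\ddiv^{-1}(\Prin(\mcQ)) = \mathbb{Z}^{\oplus D}$, so $\mathbb{Z}^D/\ddiv^{-1}(\Prin(\mcQ)) = 0$ and hence $\Cl(\mcQ) = 0$ by the displayed isomorphism.

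For $(1) \Rightarrow (2)$ I would run the same computation in reverse. If $\Cl(\mcQ) = 0$, the isomorphism forces $\ddiv^{-1}(\Prin(\mcQ)) = \mathbb{Z}^{\oplus D}$; in particular, for an arbitrary $\fp \in D$ the basis vector indexed by $\fp$ lies in $\ddiv^{-1}(\Prin(\mcQ))$, which by the remark above says precisely that the height one prime $\fp$ is principal. Since $\fp$ was arbitrary, (2) follows.

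I expect no genuine obstacle here: the statement is essentially a repackaging of Proposition \ref{LatticeDivprop} and the subsequent discussion of $\overline{\ddiv}$. The only point deserving a little care is the identification of the basis vector $\fp$ with the divisorial ideal $\fp$, together with the matching of the two readings of ``principal,'' both of which belong to the standard structure theory of Krull monoids recalled above (compare \cite[Theorem 2.3.11]{GFBook}).
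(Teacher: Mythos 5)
Your proposal is correct and takes essentially the same route as the paper: the paper gives no explicit proof, simply deriving the corollary from the displayed isomorphism $\overline{\ddiv} : \mathbb{Z}^D/\ddiv^{-1}(\Prin(\mcQ)) \xrightarrow{\cong} \Cl(\mcQ)$ and the identification of $\ddiv^{-1}(\Prin(\mcQ))$ with the subgroup generated by the principal height one primes, which is exactly what you spell out. Your added care about identifying basis vectors with the divisorial primes and reconciling the two senses of ``principal'' (via $\fp \subseteq \mcQ$) is a worthwhile clarification of what the paper leaves implicit.
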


\section{Local log-regular rings}\label{Seclogreg}

In this section, we provide an overview of the definition and fundamental properties of log-regularity of commutative rings.
First, we provide the log structure of commutative rings.
\begin{definition}
Let $R$ be a ring, let $\mcQ$ be a monoid, and let $\alpha : \mcQ \to R$ be a monoid homomorphism.
\begin{enumerate}
\item
The triple $(R, \mathcal{Q}, \alpha)$ is called a \textit{log ring}.
\item
A log ring $(R, \mathcal{Q}, \alpha)$ is called a \textit{local log ring} if $R$ is local and $\alpha^{-1}(R^\times) = \mathcal{Q}^\times$, where $R^\times$ is the group of units of $R$.
\end{enumerate}
\end{definition}
Here, we define log-regularity of commutative rings.
\begin{definition}[{cf. \cite[Chapter {\bf III}, Section 1.11]{OgusBook}}]\label{log-regular}
Let $(R, \mathcal{Q}, \alpha)$ be a local log ring, where $R$ is Noetherian, $\mathcal{Q}$ is cancellative, and $\sharQ$ is finitely generated and root closed.
Let $I_{\alpha}$ be the ideal of $R$ generated by $\alpha(\mcQ^+)$.
Then $(R, \mcQ, \alpha)$ is called a \textit{local log-regular ring} if it satisfies the following conditions.
\begin{enumerate}
\item
$R/I_{\alpha}$ is a regular local ring.
\item
The equality $\dim (R) = \dim (R/I_{\alpha}) + \dim (\mcQ)$ holds.
\end{enumerate}
\end{definition}

\begin{remark}\label{rmk2125}
    We note that a monoid $\mcQ$ appearing in Definition \ref{log-regular} has a decomposition $\mcQ \cong \sharQ \times \mcQ^\times$ by Lemma \ref{decompmon}. 
    This implies that the natural projection $\pi: \mcQ \twoheadrightarrow \sharQ$ splits as a monoid homomorphism, that is, $\alpha$ factors through $\pi$.
    Hence we obtain another log structure $(R, \sharQ, \alpha_{\redu})$ where $\alpha_{\redu} : \sharQ \to R$ is the monoid homomorphism such that $\alpha = \alpha_{\redu} \circ \pi$.
\end{remark} 



\begin{lemma}\label{2241637}
Let $(R,\mcQ, \alpha)$ be a log ring, where $\mcQ$ is a cancellative monoid.
Assume that $\alpha$ is injective.
Then the image of $\alpha$ is contained in $R^\bullet = R\backslash \{ 0\}$.
\end{lemma}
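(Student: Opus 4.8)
I want to show that if $\alpha\colon \mcQ \to R$ is injective and $\mcQ$ is cancellative, then $\alpha(\mcQ) \subseteq R \setminus \{0\}$, i.e.\ no nonzero element of $\mcQ$ is sent to $0$. The key observation is that $0 \in R$ is absorbing for multiplication, so the preimage $\alpha^{-1}(0)$, if nonempty, behaves like an $s$-ideal of $\mcQ$ that also ``absorbs'' under addition in a very rigid way. First I would suppose for contradiction that there is some $a \in \mcQ$ with $\alpha(a) = 0$.

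First step: since $\alpha$ is a monoid homomorphism, $\alpha(0_\mcQ) = 1_R \neq 0$ (assuming $R$ is a ring with $1 \ne 0$; if $R$ is the zero ring the statement is vacuous or trivial, and in the intended application $R$ is local hence nonzero), so $a \neq 0_\mcQ$. Second step: for every $q \in \mcQ$ we compute $\alpha(a + q) = \alpha(a)\alpha(q) = 0 \cdot \alpha(q) = 0$. In particular $\alpha(a + q) = \alpha(a)$ for all $q$. Third step: apply this with two different choices and use injectivity. Taking $q = a$ gives $\alpha(2a) = 0 = \alpha(a)$, so by injectivity $2a = a$ in $\mcQ$; since $\mcQ$ is cancellative, cancelling $a$ yields $a = 0_\mcQ$, contradicting the second step. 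Hence $\alpha^{-1}(0) \cap \mcQ = \emptyset$, which is exactly the claim.

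**Main obstacle.** There is essentially no analytic or structural difficulty here; the only thing to be careful about is the degenerate case and the role of each hypothesis. The hypothesis that $\alpha$ is injective is used precisely once (to pass from $\alpha(2a) = \alpha(a)$ to $2a = a$), and cancellativity is used precisely once (to pass from $2a = a$ to $a = 0$). One should note that injectivity alone is not quite enough without cancellativity if one only knows $2a = a$ abstractly — but in a cancellative monoid this forces $a = 0$. Alternatively, one can avoid cancellativity by arguing directly: from $\alpha(a+q) = 0$ for all $q$ and $\alpha$ injective, the fibre over $0$ is a single point if it is nonempty, yet $a, a+a, a+a+a, \dots$ all lie in it, forcing $a = 2a = 3a = \cdots$; injectivity then collapses these but one still needs cancellativity to conclude $a = 0$. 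So the cleanest writeup keeps both hypotheses exactly as stated. I expect the proof to be three or four lines.
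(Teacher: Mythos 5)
Your proof is correct and follows essentially the same route as the paper: assume $\alpha(a)=0$, observe $\alpha(a+y)=\alpha(a)$, and use injectivity plus cancellativity to force a contradiction (the paper takes $y$ an arbitrary nonzero element of $\mcQ$, you take $y=a$ and rule out $a=0_\mcQ$ via $\alpha(0_\mcQ)=1_R$ — a negligible difference). Your explicit handling of the degenerate cases is a small bonus over the paper's version.
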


\begin{proof}
If $\mathcal{Q}$ is the zero monoid, the claim holds obviously.
Thus we may assume that $\mathcal{Q}$ is a non-zero monoid.
Suppose that there exists $x \in \mathcal{Q}$ such that $\alpha(x) = 0$.
Then, for a non-zero element $y \in \mcQ$, we have the equality $\alpha(x+y) = \alpha(x)$.
Since $\alpha$ is injective and $\mcQ$ is cancellative, we obtain $y=0$.
This is a contradiction.
Thus $\im \alpha \subseteq R^\bullet$ holds.
\end{proof}

In the situation of Lemma \ref{2241637}, the homomorphism $\alpha : \mathcal{Q} \to R$ factors through $\alpha^\bullet : \mathcal{Q} \to R^\bullet$.

\begin{definition}
A monoid homomorphism $\theta : \mathcal{P} \to \mathcal{Q}$ is \textit{exact} if the following diagram is cartesian:
\[
\xymatrix{
\mathcal{P}^{\gp} \ar[r] & \mathcal{Q}^{\gp} \\
\mathcal{P} \ar[u] \ar[r]& \mathcal{Q}, \ar[u]
}
\]
that is, $\mathcal{Q} \times_{\mathcal{Q}^{\gp}} \mathcal{P}^{\gp} = \mathcal{P}$.
\end{definition}

By Theorem \ref{Katostructure} which we will introduce later, we obtain the injectivity of the monoid homomorphism $\alpha$ of a local log-regular ring $(R, \mcQ, \alpha)$.
To provide a criterion of Gorensteinness of a local log-regular ring (Corollary \ref{logGorenstein}), we need the exactness of $\alpha^\bullet$.

\begin{lemma}
Let $(R, \mathcal{Q}, \alpha)$ be a local log-regular ring.
Assume that $\mathcal{Q}$ is finitely generated, cancellative, reduced, and root closed.
Then $\alpha^\bullet$ is exact.
\end{lemma}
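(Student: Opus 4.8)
The plan is to show that the natural diagram
\[
\xymatrix{
\mathcal{Q}^{\gp} \ar[r]^{\alpha^{\gp}} & (R^\bullet)^{\gp} \\
\mathcal{Q} \ar[u] \ar[r]_{\alpha^\bullet}& R^\bullet \ar[u]
}
\]
is cartesian, i.e.\ that if $a \in \mathcal{Q}^{\gp}$ maps into $R^\bullet$ under $\alpha^{\gp}$, then $a \in \mathcal{Q}$. Write $a = p - q$ with $p, q \in \mathcal{Q}$, so that $\alpha(p) = \alpha^{\gp}(a)\,\alpha(q)$ in $R^\bullet$, and hence $\alpha(p) = u\,\alpha(q)$ for some unit $u \in R^\times$; in particular $\alpha(q)$ divides $\alpha(p)$ in $R$. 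The goal is to conclude $p - q \in \mathcal{Q}$ using root closedness of $\mathcal{Q}$, so it suffices to find $n > 0$ with $n(p-q) \in \mathcal{Q}$.

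First I would set up the order-theoretic input. For each height-one prime $\mathfrak{p}$ of $\mathcal{Q}$, Theorem \ref{logregequiv} gives that $\mathfrak{p}R$ is a prime ideal of $R$ with $\alpha^{-1}(\mathfrak{p}R) = \mathfrak{p}$; since $\mathcal{Q}$ is finitely generated, cancellative and root closed (hence Krull), the valuations $v_{\mathfrak{p}}$ on $\mathcal{Q}^{\gp}$ attached to these finitely many height-one primes (via Proposition \ref{LatticeDivprop} and the isomorphism $\ddiv$) detect membership in $\mathcal{Q}$: an element $a \in \mathcal{Q}^{\gp}$ lies in $\mathcal{Q}$ if and only if $v_{\mathfrak{p}}(a) \ge 0$ for all such $\mathfrak{p}$. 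So the plan reduces to checking $v_{\mathfrak{p}}(p) \ge v_{\mathfrak{p}}(q)$ for every height-one prime $\mathfrak{p}$ of $\mathcal{Q}$. The key step is to relate $v_{\mathfrak{p}}$ on the monoid side to a discrete valuation on $R$: I would argue that $R_{\mathfrak{p}R}$ is a DVR (using that $R$ is normal — it is Cohen--Macaulay and, by the structure theory, normal — so localizing at the height-one prime $\mathfrak{p}R$ gives a DVR), and that the induced valuation $w_{\mathfrak{p}}$ on $R$ restricts along $\alpha^\bullet$ to a nonnegative multiple of $v_{\mathfrak{p}}$ on $\mathcal{Q}$, with $w_{\mathfrak{p}}(\alpha(x)) = c_{\mathfrak{p}}\, v_{\mathfrak{p}}(x)$ for $x \in \mathcal{Q}$, where $c_{\mathfrak{p}} > 0$ is the ramification index; the point is that the prime $\mathfrak{p}R$ pulls back exactly to $\mathfrak{p}$ so the valuation is faithful on the monoid.

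Granting this, the relation $\alpha(p) = u\,\alpha(q)$ with $u \in R^\times$ gives $w_{\mathfrak{p}}(\alpha(p)) = w_{\mathfrak{p}}(\alpha(q))$, hence $c_{\mathfrak{p}} v_{\mathfrak{p}}(p) = c_{\mathfrak{p}} v_{\mathfrak{p}}(q)$ and so $v_{\mathfrak{p}}(p) = v_{\mathfrak{p}}(q)$ for every height-one prime $\mathfrak{p}$. Therefore $v_{\mathfrak{p}}(p - q) = 0 \ge 0$ for all such $\mathfrak{p}$, and by the Krull-monoid criterion $a = p - q \in \mathcal{Q}$. This shows the diagram is cartesian, i.e.\ $\alpha^\bullet$ is exact. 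An alternative, more elementary route avoiding valuation theory: reduce modulo $I_\alpha$ is not available since $\alpha(\mathcal{Q}^+) \subseteq I_\alpha$, so instead one can pass to the completion $\widehat{R} \cong A[[\mathcal{Q} \oplus \mathbb{N}^s]]$-type monoid algebra via Kato's structure theorem (Theorem \ref{Katostructure}) and check exactness directly there, where $\alpha$ becomes the inclusion of monoid monomials into the monoid algebra and exactness is the statement that a Laurent monomial lying in the algebra already lies in the monoid part — which is immediate from root closedness of $\mathcal{Q}$.

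I expect the main obstacle to be the faithfulness/comparison step: verifying cleanly that the height-one primes of $R$ lying over height-one primes of $\mathcal{Q}$ induce valuations that do not kill or distort the monoid valuation — equivalently, that $\alpha^{-1}(\mathfrak{p}R) = \mathfrak{p}$ upgrades from a set-theoretic statement to the valuation-theoretic statement $w_{\mathfrak{p}}|_{\mathcal{Q}} = c_{\mathfrak{p}} v_{\mathfrak{p}}$. Handling units correctly (the unit $u$ contributes $0$ to every $w_{\mathfrak{p}}$, which is exactly why the argument closes) and making sure we only invoke the finitely many relevant primes are the remaining bookkeeping points; everything else is formal once the Krull-monoid dictionary from Section \ref{SecPreMon} and Theorem \ref{logregequiv} are in hand.
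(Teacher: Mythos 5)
Your route is genuinely different from the paper's, and as written it has two problems. The first is a setup error: from the hypothesis that $\alpha^{\gp}(a)$ lies in $R^\bullet$ you may only conclude that $\alpha(q)$ divides $\alpha(p)$ in $R$, not that $\alpha(p)=u\,\alpha(q)$ with $u\in R^\times$ --- the element $\alpha^{\gp}(a)$ is merely a nonzero element of $R$. Taken literally, your argument would yield $v_{\mathfrak{p}}(p)=v_{\mathfrak{p}}(q)$ for every height-one $\mathfrak{p}$, hence $p-q\in\mathcal{Q}^{\times}=0$, which is absurd since every $a\in\mathcal{Q}$ maps into $R^\bullet$. The fix is harmless (divisibility gives $w_{\mathfrak{p}}(\alpha(p))\ge w_{\mathfrak{p}}(\alpha(q))$, and $v_{\mathfrak{p}}(p-q)\ge 0$ is all you need), but it must be stated correctly. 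The second, more serious issue is that the entire weight of the proof rests on the comparison $w_{\mathfrak{p}}\circ\alpha^{\gp}=c_{\mathfrak{p}}\,v_{\mathfrak{p}}$ with $c_{\mathfrak{p}}>0$, which you explicitly leave unproved. This is not mere bookkeeping: you need (i) that $\mathfrak{p}R$ has height one in $R$ (Theorem \ref{logregequiv} only gives that it is prime with $\alpha^{-1}(\mathfrak{p}R)=\mathfrak{p}$), and (ii) that a group homomorphism $\mathcal{Q}^{\gp}\to\mathbb{Z}$ which is nonnegative on $\mathcal{Q}$ and vanishes exactly on the face $\mathcal{Q}\setminus\mathfrak{p}$ must be a positive rational multiple of $v_{\mathfrak{p}}$; the latter requires the rank count $\rank\bigl((\mathcal{Q}\setminus\mathfrak{p})^{\gp}\bigr)=\dim\mathcal{Q}-1$. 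Your alternative route via the completion also has a gap in mixed characteristic: in $C(k)\llbracket\mathcal{Q}\oplus\mathbb{N}^r\rrbracket/(\theta)$ the monomial grading is destroyed by the quotient, so the claim that a ``Laurent monomial lying in the algebra already lies in the monoid'' is not immediate there, and it certainly does not follow from root closedness alone.

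For comparison, the paper sidesteps all of this by quoting a general criterion (Ogus, Ch.\ I, Prop.\ 4.2.2): a homomorphism from a finitely generated, cancellative, root closed monoid to a cancellative monoid is exact as soon as the induced map on spectra is surjective. Surjectivity of $\Spec(\alpha^\bullet)$ is then immediate from very-solidness (Theorem \ref{logregequiv}): each prime $\mathfrak{q}$ of $\mathcal{Q}$ is the preimage of the prime $\mathfrak{q}R\setminus\{0\}$ of the multiplicative monoid $R^\bullet$. Your valuation-theoretic plan can in principle be completed, and it would give a self-contained argument, but points (i) and (ii) above are exactly where the content lies; as it stands the key step is asserted rather than proved.
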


\begin{proof}
Since $\mathcal{Q}$ is finitely generated, cancellative, and root closed and $R^\bullet$ is cancellative, it suffices to show that $\Spec (\alpha^\bullet)$ is surjective by \cite[Chapter {\bf I}, Proposition 4.2.2]{OgusBook}.
For any $\fq \in \Spec(\mcQ)$, $\fq R$ is prime of $R$ and $\alpha^{-1}(\fq R) = \fq$ by Proposition \ref{characterizationlogreg}.
Set $\fq^\bullet := \fq \backslash \{ 0 \} \subseteq R^\bullet$.
Since $\fq^\bullet$ is a prime ideal of $R^\bullet$, $\Spec (\alpha^\bullet) (\fq^\bullet) =\fq$ holds.
Hence $\Spec(\alpha^\bullet)$ is surjective.
\end{proof}


Let $\mcQ$ be a finitely generated, cancellative, reduced monoid, and let $R$ be a commutative ring.
Then we denote by $R\llbracket \mcQ \rrbracket$ the set of functions $\mcQ \to R$, viewed as an $R$-module using the usual point-wise structure and endowed with the product topology induced by the discrete topology on $R$, that is, we have the explicit description
 $$ R\llbracket \mcQ \rrbracket = \Bigl\{ \sum_{q \in \mcQ} a_qe^q~|~ a_q \in R\Bigr\}.$$
By using this description, the $R$-module $R\llbracket \mcQ \rrbracket$ admits the unique multiplication (see \cite[Chapter \bfi, Proposition 3.6.1 (2)]{OgusBook}).
Also, $R\llbracket \mcQ \rrbracket$ of $\mcQ$ can be view as the completion of $R[\mcQ]$ with respect to an ideal $R[\mcQ^+]$ (see \cite[Chapter \bfi, Proposition 3.6.1 (3)]{OgusBook}).


\begin{proposition}
Keep the notation as above.
Then the following assertions hold.
\begin{enumerate}
    \item If $\mcQ^{\gp}$ is torsion free and $R$ is also an integral domain, then $R\llbracket \mcQ\rrbracket$ is an integral domain.
    \item 
    If $R$ is a local ring with the maximal ideal $\fm$, then $R\llbracket \mcQ \rrbracket$ is a local ring with the maximal ideal consisting of elements of $R\llbracket\mcQ\rrbracket$ such that their constant term belongs to $\fm$.
\end{enumerate}
\end{proposition}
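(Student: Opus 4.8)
The plan is to realize $R\llbracket\mcQ\rrbracket$ as a subring of a formal power series ring over $R$. Since $\mcQ$ is finitely generated, $\mcQ^{\gp}$ is a finitely generated abelian group, and by hypothesis it is torsion free, of rank $r$ say; so Lemma \ref{transformation}(\ref{8159272}) provides an injective monoid homomorphism $\iota : \mcQ \hookrightarrow \mathbb{N}^r$. I would then extend functions by zero along $\iota$ to get the $R$-linear map $\Phi : R\llbracket\mcQ\rrbracket \to R\llbracket\mathbb{N}^r\rrbracket$ sending $\sum_{q} a_q e^q$ to $\sum_{q} a_q e^{\iota(q)}$, and identify $R\llbracket\mathbb{N}^r\rrbracket$ with $R[[x_1,\dots,x_r]]$. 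Since $\Phi$ is visibly injective and $R[[x_1,\dots,x_r]]$ is an integral domain whenever $R$ is, it remains only to check that $\Phi$ is a ring homomorphism; comparing the convolution formulas for the coefficient of $e^v$ on the two sides, this reduces to the elementary observation that, as $\iota$ is an injective monoid homomorphism, an element $v \in \mathbb{N}^r$ can be written as $v = v_1 + v_2$ with $v_1, v_2 \in \iota(\mcQ)$ if and only if $v \in \iota(\mcQ)$, in which case such expressions correspond bijectively, via $\iota$, to the ways of writing $\iota^{-1}(v)$ as a sum of two elements of $\mcQ$.

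\textbf{Part (2).} Let $\epsilon : R\llbracket\mcQ\rrbracket \to R$ be the constant-term map $\sum_q a_q e^q \mapsto a_0$, where $0$ denotes the identity of $\mcQ$, so that the subset described in the statement is $\mathfrak{M} := \epsilon^{-1}(\fm)$. First I would verify that $\epsilon$ is a ring homomorphism: additivity is clear, and multiplicativity holds because $\mcQ$ is reduced, so that $q_1 + q_2 = 0$ in $\mcQ$ forces $q_1 = q_2 = 0$, whence $\epsilon(fg) = \epsilon(f)\epsilon(g)$. Then $\mathfrak{M} = \epsilon^{-1}(\fm)$ is a proper ideal, and since a ring whose non-units form an ideal is local, it suffices to show that every $f \notin \mathfrak{M}$ is a unit. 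For such an $f$ we have $\epsilon(f) \in R \setminus \fm = R^\times$, so writing $f = \epsilon(f)(1 - h)$ with $h := 1 - \epsilon(f)^{-1} f \in \ker\epsilon$, it is enough to invert $1 - h$. I would produce its inverse as the geometric series $\sum_{n \ge 0} h^n$: granting that this series defines an element of $R\llbracket\mcQ\rrbracket$ --- equivalently, that for each fixed $q \in \mcQ$ the coefficient of $e^q$ in $h^n$ vanishes once $n$ is large enough --- the identity $(1-h)\sum_{n \ge 0} h^n = 1$ follows by the usual telescoping together with $h^n \to 0$ in the product topology and the continuity of multiplication (which is immediate, since each coefficient of a product depends on only finitely many coefficients of the factors).

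The heart of Part (2), and the step I expect to be the main obstacle, is the convergence of $\sum_{n \ge 0} h^n$: this amounts to the combinatorial claim that every $q \in \mcQ$ admits a finite bound $N(q)$ on the number of summands in any expression $q = q_1 + \dots + q_n$ with all $q_i \in \mcQ^+$ --- for the coefficient of $e^q$ in $h^n$ is a sum over such expressions (as $h \in \ker\epsilon$ is supported on $\mcQ^+$) and hence vanishes as soon as $n > N(q)$. To prove the claim, fix a finite generating set $g_1, \dots, g_m$ of $\mcQ$, which may be taken to consist of nonzero elements; expanding each $q_i$ in terms of the $g_j$ gives $N(q) \le \sup\{ d_1 + \dots + d_m : (d_1, \dots, d_m) \in \mathbb{N}^m,\ \sum_{j} d_j g_j = q \}$, so it suffices to bound this set. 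If it were unbounded, Dickson's lemma would produce two distinct comparable tuples $d \le d'$ in it, and then $d' - d \in \mathbb{N}^m \setminus \{0\}$ would satisfy $\sum_{j} (d'_j - d_j) g_j = 0$ in $\mcQ$; but in a reduced monoid a vanishing sum has all its summands equal to $0$, which forces $g_j = 0$ for every $j$ with $d'_j - d_j > 0$, a contradiction. (Alternatively, one may invoke the standard fact that a finitely generated, cancellative, reduced monoid admits a monoid homomorphism $\lambda : \mcQ \to \mathbb{N}$ with $\lambda(\mcQ^+) \subseteq \mathbb{Z}_{> 0}$, from which $N(q) \le \lambda(q)$.)
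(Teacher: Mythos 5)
Your argument is correct, and it is worth noting that the paper itself offers no proof of this proposition at all --- it simply cites \cite[Chapter {\bf I}, Proposition 3.6.1 (4) and (5)]{OgusBook} --- so what you have written is a genuinely self-contained alternative. For (1), realizing $R\llbracket \mcQ\rrbracket$ as a subring of $R[[x_1,\dots,x_r]]$ via the embedding $\mcQ\hookrightarrow\mathbb{N}^r$ of Lemma \ref{transformation}(\ref{8159272}) is a clean way to import integrality from the power series ring; your check that extension-by-zero is multiplicative is exactly the right point to verify, and it works because $\iota(\mcQ)$ is closed under addition and $\iota$ is injective. For (2), the reduction to inverting $1-h$ with $h\in\ker\epsilon$ and the convergence of the geometric series is the standard mechanism, and you correctly isolate the one nontrivial combinatorial input: a uniform bound $N(q)$ on the length of decompositions of $q$ into elements of $\mcQ^+$. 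Both of your justifications for this bound are valid --- the Dickson's lemma argument uses cancellativity (to pass from $\sum d'_jg_j=\sum d_jg_j$ to $\sum(d'_j-d_j)g_j=0$) and reducedness (to force the offending generators to vanish), both of which are standing hypotheses in this section; the alternative via a local homomorphism $\lambda:\mcQ\to\mathbb{N}$ is essentially the device Ogus uses. Two points are left implicit but are harmless: elements of $\mathfrak{M}$ are non-units (immediate since $\epsilon$ is a ring map), and the finiteness of two-fold decompositions needed for the convolution product to be defined is a special case of your bound on $N(q)$, so your proof in fact re-derives the well-definedness of the multiplication rather than merely assuming it.
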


\begin{proof}
    These are \cite[Chapter \bfi, Proposition 3.6.1 (4)]{OgusBook} and \cite[Chapter \bfi, Proposition 3.6.1 (5)]{OgusBook}.
\end{proof}

The following theorem is an analogue of Cohen's structure theorem and it is one of the main tools to find out properties of local log-regular rings.

\begin{theorem}[{\cite[Chapter {\bf III}, Theorem 1.11.2]{OgusBook}} or {\cite[Theorem 2.20]{INS22}}]\label{Katostructure}
Let $(R,\mcQ, \alpha)$ be a local log ring, where $R$ is Noetherian and $\mathcal{Q}$ is finitely generated, cancellative, reduced, and root closed.
Let $k$ be the residue field of $R$.
Then the following assertions hold.
\begin{enumerate}
\item\label{Katostructure1}
Suppose that $R$ is of equal characteristic.
Then $(R, \mcQ, \alpha)$ is log-regular if and only if there exists a commutative diagram of the form
\begin{equation}\label{Cohendiag1}
\vcenter{
\xymatrix{
\mcQ \ar@{^(->}[r] \ar[d]_{\alpha} & k\llbracket \mcQ \oplus \mbN^r \rrbracket \ar[d]^{\phi}_{\cong} \\
R \ar[r] & \widehat{R},
}}
\end{equation}
where the top arrow is the natural injection and $\widehat{R}$ is the $\fm$-adic completion of $R$.

\item\label{Katostructure2}
Suppose that $R$ is of mixed characteristic. 
Let $C(k)$ be a Cohen ring of $k$ and let $p>0$ be the characteristic of $k$. 
Then $(R,\mcQ, \alpha)$ is log-regular if and only if there exists a commutative diagram
\begin{equation}\label{Cohendiag2}
\vcenter{
\xymatrix{
\mcQ \ar@{^(->}[r] \ar[d]_{\alpha} & C(k)\llbracket \mathcal{Q} \oplus \mathbb{N}^r\rrbracket \ar@{->>}[d]^{\phi} \\
R \ar[r] & \widehat{R},
}}
\end{equation}
where $\phi$ is a surjection and $\ker\phi$ is a principal ideal generated by an element $\theta \in C(k)\llbracket \mathcal{Q} \oplus \mathbb{N}^r\rrbracket$ whose constant term is $p$.
\end{enumerate}
Moreover, let ${\bf e}_1, \ldots {\bf e}_r$ be the canonical basis on $\mbN^r$ and let $x_1,\ldots,x_r$ be a sequence of elements of $R$ such that $\overline{x_1}, \ldots , \overline{x_r}$ is a regular system of parameters for $R/I_\alpha$. 
If $(R, \mathcal{Q}, \alpha)$ is a local log-regular ring, then one may assume that $\phi$ sends ${\bf e}_i$ to $\widehat{x_i}$ where $\widehat{x_i}$ is the image of $x_i$ in $\widehat{R}$.
\end{theorem}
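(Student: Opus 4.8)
The plan is to reduce the statement to Cohen's structure theorem for complete Noetherian local rings, via the identification recalled above of $R\llbracket\mcQ\rrbracket$ with the completion of the monoid algebra $R[\mcQ]$ along the ideal generated by $\{e^{q}:q\in\mcQ^{+}\}$, applied to the auxiliary monoid $\mcQ\oplus\mbN^{r}$ --- the summand $\mbN^{r}$ absorbing the regular local ring $R/I_{\alpha}$. First I note that $\mcQ^{\gp}$ is torsion free: any torsion element lies in $\mcQ\cap(-\mcQ)=\mcQ^{\times}=0$ because $\mcQ$ is root closed and reduced. Hence $\mcQ\oplus\mbN^{r}$ has torsion-free groupification, so, writing $A$ for $k\llbracket\mcQ\oplus\mbN^{r}\rrbracket$ in the equal characteristic case and for $C(k)\llbracket\mcQ\oplus\mbN^{r}\rrbracket$ in the mixed characteristic case, $A$ is a Noetherian complete local domain by the proposition on $R\llbracket\mcQ\rrbracket$, of dimension $\dim\mcQ+r$, resp.\ $\dim\mcQ+r+1$, by the usual dimension formula for monoid algebras together with Lemma \ref{transformation}. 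Let $J\subseteq A$ be the ideal generated by $\{e^{q}:q\in\mcQ^{+}\}$ and $t_{1},\dots,t_{r}$; the ring $A$ is complete along $J$, and $A/J\cong k$, resp.\ $A/J\cong C(k)$.

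For the ``if'' direction, suppose a diagram as in the statement is given, so $\widehat{R}\cong A$, resp.\ $\widehat{R}\cong A/(\theta)$. Reducing modulo the ideal generated by $\alpha(\mcQ^{+})$, which under $\phi$ corresponds to $(e^{q}:q\in\mcQ^{+})$, identifies $\widehat{R}/I_{\alpha}\widehat{R}$ with $k\llbracket t_{1},\dots,t_{r}\rrbracket$, resp.\ with $C(k)\llbracket t_{1},\dots,t_{r}\rrbracket/(\overline{\theta})$, where $\overline{\theta}$ still has constant term $p$ and therefore lies outside the square of the maximal ideal; in either case the resulting ring is regular local of dimension $r$, so $R/I_{\alpha}$ is regular of dimension $r$. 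Since moreover $\dim R=\dim\widehat{R}=\dim\mcQ+r$ (in mixed characteristic one divides the complete local domain $A$ by the nonzerodivisor $\theta$, dropping the dimension by one), the equality $\dim R=\dim R/I_{\alpha}+\dim\mcQ$ holds, and $(R,\mcQ,\alpha)$ is log-regular.

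For the converse I would assume $(R,\mcQ,\alpha)$ is log-regular and build $\phi$ as follows. By Cohen's structure theorem choose a coefficient field $k\hookrightarrow\widehat{R}$ (equal characteristic), resp.\ a local homomorphism $C(k)\to\widehat{R}$ with $p\mapsto p$ (mixed characteristic). Composing $\alpha$ with $R\to\widehat{R}$ and using this coefficient ring produces a homomorphism $k[\mcQ]\to\widehat{R}$, resp.\ $C(k)[\mcQ]\to\widehat{R}$, that carries $(e^{q}:q\in\mcQ^{+})$ into $I_{\alpha}\subseteq\fm\widehat{R}$ and hence, by completeness of $\widehat{R}$, extends along the completion map. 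Fixing $x_{1},\dots,x_{r}$ lifting a regular system of parameters of $R/I_{\alpha}$, so that $\fm=I_{\alpha}+(x_{1},\dots,x_{r})$, and declaring $\mathbf{e}_{i}\mapsto\widehat{x_{i}}$ extends likewise along $k\llbracket\mbN^{r}\rrbracket$. Amalgamating the two extensions gives the map $\phi$ out of $A$ with $\phi(\mathbf{e}_{i})=\widehat{x_{i}}$, which is the ``Moreover'' clause. Surjectivity of $\phi$ is the standard fact that a continuous homomorphism out of a ring complete along an ideal $J$ onto a ring complete and separated along $\fm\widehat{R}$ is onto once $\phi(J)\widehat{R}=\fm\widehat{R}$ and the induced map of residue rings is onto: here $\phi(J)\widehat{R}=I_{\alpha}\widehat{R}+(x_{1},\dots,x_{r})\widehat{R}=\fm\widehat{R}$, and the residue map is $k\to k$, resp.\ $C(k)\to k$, both surjective.

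It remains to compute $\ker\phi$. In equal characteristic $A$ is a complete local domain of dimension $\dim\mcQ+r=\dim R/I_{\alpha}+\dim\mcQ=\dim R=\dim\widehat{R}$, and a surjection from a complete local domain onto a ring of equal dimension is an isomorphism (a minimal prime $\fp$ over the kernel satisfies $\dim A/\fp=\dim A$, hence $\Ht\fp=0$ by catenarity, hence $\fp=0$), so $\phi$ is an isomorphism. In mixed characteristic, $R$ is a Cohen--Macaulay domain, so $p$ is a nonzerodivisor on $\widehat{R}$, whence $\dim\widehat{R}/p\widehat{R}=\dim\widehat{R}-1$, $\widehat{R}$ is flat over $C(k)$, and $\dim\widehat{R}=\dim\mcQ+r=\dim A-1$. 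As $A$ and $\widehat{R}$ are both Cohen--Macaulay with this dimension drop, the Auslander--Buchsbaum formula gives $\pd_{A}\widehat{R}=1$, so $\ker\phi$ is a free $A$-module of rank one, i.e.\ a principal ideal $(\theta)$ generated by a nonzerodivisor. Reducing modulo $p$, the image of $\theta$ in $A/pA=k\llbracket\mcQ\oplus\mbN^{r}\rrbracket$ cannot be a unit (else $\widehat{R}/p\widehat{R}=0$), so it lies in $J\bmod p$, whence the image of $\theta$ in $A/J\cong C(k)$ lies in $pC(k)$; up to a unit of $A$ it then remains only to check that this image is \emph{exactly} $p$ rather than a higher power. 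I expect this last point to be the genuine obstacle --- pinning down the $p$-adic valuation of the constant term of $\theta$ to be one. Everything else (the construction and surjectivity of $\phi$, the ``if'' direction, and the principality of $\ker\phi$ via Cohen--Macaulayness and Auslander--Buchsbaum) is routine once the monoid-algebra completion picture is in place; the precise shape of $\theta$, however, requires a careful analysis of the flatness of $\widehat{R}$ over the Cohen ring $C(k)$ and of the special fibre $\widehat{R}/p\widehat{R}$, which is where the real content of this structure theorem lies.
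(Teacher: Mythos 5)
The paper does not actually prove this statement---it imports it verbatim from Ogus (Chapter III, Theorem 1.11.2), extracting the ``Moreover'' clause from the proof given there---so you are supplying an argument where the paper supplies a citation. Judged on its own terms, your ``if'' direction, the construction of $\phi$ from a coefficient field (resp.\ Cohen ring), the surjectivity argument via $J$-adic completeness, and the equal-characteristic kernel computation (a surjection from a Noetherian local domain onto a ring of the same dimension is injective) are all sound.

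The mixed-characteristic kernel computation, however, contains a genuine gap, and it is not the one you flagged. The Auslander--Buchsbaum formula presupposes $\pd_A\widehat{R}<\infty$, but $A=C(k)\llbracket \mcQ\oplus\mbN^r\rrbracket$ is Cohen--Macaulay and \emph{not} regular unless $\mcQ$ is free, so finite projective dimension is not automatic. Without it, Cohen--Macaulayness of $\widehat{R}$ in codimension one only tells you that $\ker\phi$ is an unmixed height-one (hence divisorial) ideal, and such an ideal is principal exactly when its class in $\Cl(A)\cong\Cl(\mcQ)$ vanishes---the nonvanishing of this group being precisely the subject of Section~\ref{SecDiv}. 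So ``CM $+$ codimension one $\Rightarrow$ kernel principal'' fails in this generality. (There is also a circularity risk: the Cohen--Macaulayness, normality, and domain properties of $R$ and $\widehat{R}$ that you invoke are, in Kato and Ogus, deduced \emph{from} this structure theorem.) The standard repair is to produce $\theta$ first: since $p\in\fm\widehat{R}=\phi(J)\widehat{R}$, one can write $p=\phi(j)$ with $j\in JA$ and set $\theta:=p-j\in\ker\phi$, which has constant term $p$ by construction; one then proves $\ker\phi=(\theta)$ by comparing associated graded rings via the isomorphism $\gr_{I_\alpha}(R)\cong\gr_{\mathbb{Z}[\mcQ^+]}(\mathbb{Z}[\mcQ])\otimes_{\mathbb{Z}}R/I_\alpha$ of Proposition~\ref{characterizationlogreg}~(\ref{chargr}), which is where the real content lies. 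By contrast, the step you singled out as ``the genuine obstacle'' is easy once principality is known: reducing modulo $J$ gives $C(k)/(\theta_0)\cong A/(J+\ker\phi)\cong\widehat{R}/\fm\widehat{R}=k$ for the constant term $\theta_0$, and since $C(k)$ is a discrete valuation ring with maximal ideal $(p)$ this forces $(\theta_0)=(p)$, after which a unit adjustment makes $\theta_0=p$.
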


\begin{proof}
The former assertions (1) and (2) are exactly \cite[Chapter {\bf III}, Theorem 1.11.2]{OgusBook}.
The latter assertion is obtained in the proof of \cite[Chapter {\bf III}, Theorem 1.11.2]{OgusBook}.
\end{proof}

\begin{definition}
Let $(R, \mathcal{Q}, \alpha)$ be a log ring.
\begin{enumerate}
    \item $R$ is \textit{$\alpha$-flat} if $\Tor^{\mathbb{Z}[\mathcal{Q}]}_1 (\mathbb{Z}[\mcQ]/\mathbb{Z}[I], R)=0$ 
 for any ideal $I \subseteq \mcQ$.
    \item $R$ is \textit{faithfully $\alpha$-flat} if $R$ is $\alpha$-flat and it satisfies the following condition:
    For a $\mathbb{Z}[\mathcal{Q}]$-module $M$, $R\otimes_{\mathbb{Z}[\mcQ]} M =0$ if and only if $M=0$.
\end{enumerate}
\end{definition}




Under the first condition in Definition \ref{log-regular}, the second condition is equivalent to several conditions.

\begin{proposition}\label{characterizationlogreg}
Keep the notation and the assumption as in Definition \ref{log-regular}.
Assume that $R/I_\alpha$ is regular.
Then the following conditions are equivalent:
\begin{enumerate}
    \item\label{charlogreg} $(R, \mathcal{Q},\alpha)$ is a local log-regular ring.
    \item\label{charverysolid} For every prime ideal $\fq$ of $\mcQ$, the ideal $\fq R$ generated by $\alpha(\fq)$ is a prime ideal of $R$ such that $\alpha^{-1}(\fq R) = \fq$.\footnote{In \cite{OgusBook}, the monoid homomorphism $\alpha$ of a log ring $(R,\mathcal{Q},\alpha)$ which satisfies the latter condition is called \textit{very solid}.}
    \item\label{charflat} $R$ is $\alpha$-flat.
    \item\label{chartor} $\Tor_1^{\mathbb{Z}[\mcQ]} (\mathbb{Z}[\mcQ]/\mathbb{Z}[\mcQ^+], R) =0$.
    \item\label{chargr} $\gr_{\mathbb{Z}[\mcQ^+]}(\mathbb{Z}[\mcQ]) \otimes_{\mathbb{Z}} R/I_{\alpha} \cong \gr_{I_{\alpha}} R$ is an isomorphism.
\end{enumerate}
\end{proposition}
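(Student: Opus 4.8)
The plan is to prove the equivalences through the chain $(1)\Leftrightarrow(2)$, $(3)\Leftrightarrow(4)$, $(2)\Rightarrow(3)$, $(3)\Rightarrow(5)$, and $(5)\Rightarrow(1)$. Throughout, using Remark~\ref{rmk2125} we may and do assume $\mcQ$ is reduced; this changes neither $I_\alpha$ nor the validity of any of (1)--(5).

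$(1)\Leftrightarrow(2)$ is precisely Theorem~\ref{logregequiv}: the standing hypothesis is condition~(1) of Definition~\ref{log-regular}, and Theorem~\ref{logregequiv} equates the remaining dimension equality with the very-solidity of $\alpha$. The implication $(3)\Rightarrow(4)$ is the instance $I=\mcQ^+$ of the definition of $\alpha$-flatness. For $(4)\Rightarrow(3)$ one d\'evisses: a general module $\mathbb{Z}[\mcQ]/\mathbb{Z}[I]$, $I$ a monoid ideal, carries a finite filtration by $\mcQ$-graded $\mathbb{Z}[\mcQ]$-submodules whose subquotients are, up to a monomial shift, of the form $\mathbb{Z}[\mcQ]/\mathbb{Z}[\mathfrak p]$ for monoid primes $\mathfrak p$; the monomial shifts are harmless for $\Tor$ since $\mathbb{Z}[\mcQ]$ is a domain (as $\mcQ$ is cancellative with torsion-free groupification, by Lemma~\ref{transformation}), and localizing $\mathbb{Z}[\mcQ]$ at $\mathfrak p$ reduces the vanishing of $\Tor_1^{\mathbb{Z}[\mcQ]}(\mathbb{Z}[\mcQ]/\mathbb{Z}[\mathfrak p],R)$ to an instance of (4) for a localized log ring, after which long exact sequences finish the job.

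The substantive step is $(2)\Rightarrow(3)$. Since $\mathbb{Z}[\mcQ]/\mathbb{Z}[I]$ is finitely presented over $\mathbb{Z}[\mcQ]$ and $R\to\widehat R$ is faithfully flat, $R$ is $\alpha$-flat iff $\widehat R$ is. By Theorem~\ref{Katostructure}, $\widehat R$ is a complete monoid algebra $A\llbracket\mcQ\oplus\mathbb{N}^r\rrbracket$ over $A=k$ (equal characteristic), or the quotient of such a ring by a principal ideal $(\theta)$ with $\theta$ a nonzerodivisor of constant term $p$ over $A=C(k)$ (mixed characteristic; the ambient ring is a domain because $\mcQ^{\gp}$ is torsion-free). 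A flat base change along the polynomial extension $\mathbb{Z}[\mcQ]\hookrightarrow\mathbb{Z}[\mathcal P]$, $\mathcal P:=\mcQ\oplus\mathbb{N}^r$, lets us absorb the free summand, so it suffices to treat $A\llbracket\mathcal P\rrbracket$ and its hypersurface quotient. Now $A\llbracket\mathcal P\rrbracket$ is flat over the monoid algebra $A[\mathcal P]$ (a completion of a Noetherian ring), while $A[\mathcal P]=\mathbb{Z}[\mathcal P]\otimes_\mathbb{Z}A$; since $\mathbb{Z}[\mathcal P]$ and $\mathbb{Z}[\mathcal P]/\mathbb{Z}[I]$ are free $\mathbb{Z}$-modules, associativity of the derived tensor product gives $\Tor^{\mathbb{Z}[\mathcal P]}_i(\mathbb{Z}[\mathcal P]/\mathbb{Z}[I],A[\mathcal P])\cong\Tor^{\mathbb{Z}}_i(\mathbb{Z}[\mathcal P]/\mathbb{Z}[I],A)=0$ for $i>0$, so $A\llbracket\mathcal P\rrbracket$ is flat (in particular $\alpha$-flat) over $\mathbb{Z}[\mathcal P]$. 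In mixed characteristic, applying $(\mathbb{Z}[\mathcal P]/\mathbb{Z}[I])\otimes_{\mathbb{Z}[\mathcal P]}-$ to $0\to A\llbracket\mathcal P\rrbracket\xrightarrow{\theta}A\llbracket\mathcal P\rrbracket\to\widehat R\to0$ together with the preceding vanishing identifies $\Tor_1^{\mathbb{Z}[\mathcal P]}(\mathbb{Z}[\mathcal P]/\mathbb{Z}[I],\widehat R)$ with the $\theta$-torsion submodule of $A\llbracket\mathcal P\rrbracket/\mathbb{Z}[I]\cdot A\llbracket\mathcal P\rrbracket$; since the associated primes of this monomial ideal are themselves monomial, hence consist of elements with vanishing constant term, whereas $\theta$ has constant term $p$, multiplication by $\theta$ is injective on that quotient and the torsion vanishes. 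I expect this last point --- controlling the associated primes of a monomial ideal in the complete monoid algebra and thereby certifying $\theta$ as a nonzerodivisor --- to be the main obstacle; the remainder of $(2)\Rightarrow(3)$ is flat base change, as is $(4)\Rightarrow(3)$ above.

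Finally, $(3)\Rightarrow(5)$ and $(5)\Rightarrow(1)$ are formal. The ring map $\mathbb{Z}[\mcQ]\to R$ carries $\mathbb{Z}[\mcQ^+]$ into $I_\alpha$ with $\mathbb{Z}[\mcQ^+]^nR=I_\alpha^n$; as $\mathbb{Z}[\mcQ^+]^n=\mathbb{Z}[n\mcQ^+]$ and $n\mcQ^+$ is a monoid ideal, (3) yields isomorphisms $\mathbb{Z}[\mcQ^+]^n\otimes_{\mathbb{Z}[\mcQ]}R\xrightarrow{\cong}I_\alpha^n$ for all $n$, and passing to the evident short exact sequences gives $(\mathbb{Z}[\mcQ^+]^n/\mathbb{Z}[\mcQ^+]^{n+1})\otimes_{\mathbb{Z}}R/I_\alpha\xrightarrow{\cong}I_\alpha^n/I_\alpha^{n+1}$, i.e.\ (5); the same computation yields $(4)\Rightarrow(5)$ once $(4)\Rightarrow(3)$ is available. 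For $(5)\Rightarrow(1)$ one uses $\dim R=\dim\gr_{I_\alpha}(R)$ (valid for a proper ideal of a Noetherian local ring), which by (5) equals the Krull dimension of $\gr_{\mathbb{Z}[\mcQ^+]}(\mathbb{Z}[\mcQ])\otimes_{\mathbb{Z}}R/I_\alpha$; here $\gr_{\mathbb{Z}[\mcQ^+]}(\mathbb{Z}[\mcQ])$ is a finitely generated, $\mathbb{Z}$-free graded $\mathbb{Z}$-algebra every fibre of which over $\Spec\mathbb{Z}$ is the associated graded of a monoid algebra of $\mcQ$ over a field, of dimension $\dim\mcQ$ by Lemma~\ref{transformation}, so its flat base change to the regular local ring $R/I_\alpha$ has dimension $\dim(R/I_\alpha)+\dim\mcQ$. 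Hence $\dim R=\dim(R/I_\alpha)+\dim\mcQ$, which is condition~(2) of Definition~\ref{log-regular}, and $(1)$ follows.
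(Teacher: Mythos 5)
The paper itself disposes of this proposition by citation (Ogus, Ch.~III, Theorem~1.11.1 and Proposition~1.11.5, plus \cite{Tho06} for $(1)\Leftrightarrow(3)$), so your self-contained argument is necessarily a different route; most of it is sound, and your $(2)\Rightarrow(3)$ via the structure theorem is essentially how one would do this by hand. However, one step does not work as described: $(4)\Rightarrow(3)$. Even granting the prime filtration of $\mathbb{Z}[\mcQ]/\mathbb{Z}[I]$ with subquotients of the form $\mathbb{Z}[\mcQ]/\mathbb{Z}[\fp]=\mathbb{Z}[F]$ (where $F=\mcQ\setminus\fp$ is a face), you still must prove $\Tor_1^{\mathbb{Z}[\mcQ]}(\mathbb{Z}[F],R)=0$ for every monoid prime $\fp$, and ``localizing at $\fp$'' cannot establish this: $\Tor_1^{\mathbb{Z}[\mcQ]}(\mathbb{Z}[F],R)$ is a finitely generated module over the local ring $R$, typically supported at the maximal ideal (note $\fp R\subseteq I_\alpha\subseteq\fm$), and inverting $\alpha(F)$ --- which contains non-units of $R$ whenever $\fp\neq\mcQ^+$ --- annihilates precisely the part you need to control, so vanishing after localization proves nothing. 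Moreover, hypothesis (4) concerns only $\mcQ^+$; the analogous statement for the localized log ring is not among your assumptions and is itself downstream of this proposition. Since your $(4)\Rightarrow(5)$ is declared conditional on $(4)\Rightarrow(3)$, as written condition (4) is never shown to imply the others and the cycle of implications does not close.

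The repair is cheap and stays inside your own framework: prove $(4)\Rightarrow(5)$ directly. The subquotients $\mathbb{Z}[n\mcQ^+]/\mathbb{Z}[(n+1)\mcQ^+]$ of the $\mcQ^+$-adic filtration are annihilated by $\mathbb{Z}[\mcQ^+]$ (since $q+q'\in(n+1)\mcQ^+$ for $q\in n\mcQ^+$ and $q'\in\mcQ^+$), hence are direct sums of monomial-shifted copies of $\mathbb{Z}[\mcQ]/\mathbb{Z}[\mcQ^+]$; so (4) together with the long exact sequence gives $\Tor_1^{\mathbb{Z}[\mcQ]}(\mathbb{Z}[\mcQ]/\mathbb{Z}[n\mcQ^+],R)=0$ for all $n$ by induction on the (finite) length of this filtration, which is all your $(3)\Rightarrow(5)$ computation actually uses. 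Then $(1)\Rightarrow(2)\Rightarrow(3)\Rightarrow(4)\Rightarrow(5)\Rightarrow(1)$ closes the loop and the problematic d\'evissage can be deleted. Separately, the point you flag as the ``main obstacle'' --- that $\theta$ is a nonzerodivisor on $N=A\llbracket\mathcal P\rrbracket/IA\llbracket\mathcal P\rrbracket$ --- is indeed unproved as written (that associated primes of monomial ideals remain monomial over the non-field base $A=C(k)$ and after completion needs an argument), but it admits a direct proof avoiding associated primes: each graded piece $J^kN/J^{k+1}N$, for $J=\mathcal P^+A\llbracket\mathcal P\rrbracket$, is a free $A$-module on monomials, so $p$ acts injectively on it; from $\theta n=0$ one deduces $pn=-(\theta-p)n\in J^{k+1}N$ whenever $n\in J^kN$, hence $n\in J^{k+1}N$, and so $n\in\bigcap_kJ^kN=0$ by $J$-adic separatedness. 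With these two repairs the proof goes through.
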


\begin{proof}
    The equivalences $(1) \Leftrightarrow (2) \Leftrightarrow (4) \Leftrightarrow (5)$ are a combination of \cite[Chapter \bfiii, Theorem 1.11.1]{OgusBook} and \cite[Chapter \bfiii, Proposition 1.11.5]{OgusBook}.
    The equivalence $(1) \Leftrightarrow (3)$ is \cite[Proposition 52]{Tho06}.
\end{proof}

Conde-Lago and Majadas characterize local log-regular rings based on the vanishing of the homology of the logarithmic cotangent complex.
We refer the reader to \cite{CM22}.

We provide an example of non-complete local log-regular rings which is called a \textit{Jungian domain}.
This is defined by Abhyankar \cite{Abh65} (see also \cite[\S 12]{Kat94}).
He introduced it and explored how to construct it.
For example, see \cite[Theorem 10]{Abh65} or \cite[Theorem 14]{Abh65}.
Here we recall the definition of Jungian domains and provide an induced log structure.

\begin{definition}[{\cite[P23, Definition 2]{Abh65}}]\label{DefJungiandom}
    Let $(R,\fm)$ be a Noetherian local domain.
    We say that $(R, \fm)$ is a \textit{Jungian domain} if it is a two-dimensional normal domain such that the following condition is satisfied:
    There exist integers $m,n \in \mathbb{Z}$ with $0 \le m \le n$ and $\textnormal{GCD}(m,n)=1$, and generators $x, y, z_1, \ldots, z_{n-1}$ of $\fm$ such that $z_i^n = x^iy^{m_i}$ for any $i=1,\ldots, n-1$, where $m_i$ is the unique integer such that $0 \le m_i \le n$ and $m_i = mi ~(\textnormal{mod}~n)$.
\end{definition}

\begin{lemma}\label{Junglogreg}
    Let $(R,\fm)$ be a Jungian domain, let $\mathcal{M}$ be the multiplicative submonoid 
    $$\langle x^{l_1} y^{l_2} z_1^{l_{3}} \cdots z_{n-1}^{l_{n+1}} \in R~|~l_1,\ldots, l_{n+1} \geq 0  \rangle,$$
    and let $\alpha : \mathcal{M} \hookrightarrow R$ be the inclusion map.
    Then $\mathcal{M}$ is finitely generated, cancellative, reduced, and root closed.
    Moreover, $(R, \mathcal{M}, \alpha)$ is a local log-regular ring.
\end{lemma}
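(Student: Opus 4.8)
The plan is to verify the defining conditions of a local log-regular ring (Definition \ref{log-regular}) directly for the triple $(R, \mathcal{M}, \alpha)$, after first establishing the stated properties of $\mathcal{M}$ as a monoid. First I would identify $\mathcal{M}$ explicitly as an abstract monoid. Since $R$ is a domain, multiplication is cancellative, so $\mathcal{M}$ is a cancellative submonoid of $(R^\bullet, \cdot)$; it is reduced because the only unit among the listed monomials is $1 = x^0 y^0 z_1^0 \cdots$, as $x, y, z_i \in \mathfrak{m}$ forces any nontrivial monomial into $\mathfrak{m}$. For finite generation, I would use the relations $z_i^n = x^i y^{m_i}$ to show $\mathcal{M}$ is generated by $x, y, z_1, \ldots, z_{n-1}$ together with a finite set of monomials of bounded degree; in fact $\mathcal{M}^{\gp}$ embeds in the free abelian group on $x, y$ (since each $z_i$ is $n$-divisible into the subgroup generated by $x,y$, one gets $\mathcal{M}^{\gp} \subseteq \frac{1}{n}\mathbb{Z}\langle x,y\rangle$, which is torsion-free of rank $2$), and root-closedness should follow by checking that the $n$-th roots forced to exist are exactly the $z_i$-monomials already present. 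Concretely, I expect $\overline{\mathcal{M}} = \mathcal{M}$ to be isomorphic to the saturation of the submonoid of $\mathbb{Z}^2$ generated by $(1,0), (0,1)$, and $(i, m_i)/n$ — a two-dimensional affine toric monoid.

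Next I would verify log-regularity via Definition \ref{log-regular}. The ideal $I_\alpha$ is generated by $\alpha(\mathcal{M}^+)$, which by the description above is generated by $x$ and $y$ (every nontrivial generator of $\mathcal{M}$ lies in $(x,y)$, since $z_i^n \in (x,y)$ and $R$ is normal, hence $z_i \in (x,y)$ after checking — actually one must be slightly careful, but $z_i \in \mathfrak{m}$ and the structure of Jungian domains gives $z_i \in \sqrt{(x,y)}$, and since $R/(x,y)$ will turn out to be a field this is automatic). So $R/I_\alpha = R/(x,y)$. The quotient $R/(x,y)$ is generated as a $k$-algebra by the images of $z_1, \ldots, z_{n-1}$, each of which satisfies $\overline{z_i}^{\,n} = \overline{x^i y^{m_i}} = 0$; combined with $\dim \mathcal{M} = \rank \mathcal{M}^{\gp} = 2$ (Lemma \ref{transformation}\eqref{8159271}) and $\dim R = 2$, condition (2) of Definition \ref{log-regular} forces $\dim(R/I_\alpha) = 0$, so $R/I_\alpha$ is Artinian local; it is regular (i.e. a field) precisely when $(x,y) = \mathfrak{m}$, which I would deduce from normality together with the relations — this is the crux.

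The main obstacle I anticipate is showing that $R/I_\alpha$ is a \emph{regular} local ring, equivalently that $\mathfrak{m} = (x,y)$. One knows $x, y, z_1, \ldots, z_{n-1}$ generate $\mathfrak{m}$, and that $z_i^n \in (x,y)$; the point is that in a two-dimensional normal local domain, $(x,y)$ being $\mathfrak{m}$-primary with $R/(x,y)$ of length one is exactly what needs to be extracted from Abhyankar's construction. I would handle this either by invoking the explicit description of Jungian domains (e.g. \cite[Theorem 10]{Abh65} or \cite[Theorem 14]{Abh65}), where such rings are realized inside a suitable extension so that the $z_i$ become expressible, or — more in the spirit of this paper — by checking the equivalent condition \eqref{charverysolid} of Proposition \ref{characterizationlogreg}: one verifies directly that for each of the finitely many primes $\mathfrak{q} \in \Spec(\mathcal{M})$ (finite by Proposition \ref{fundamentalmspc}), the ideal $\mathfrak{q}R$ is prime with $\alpha^{-1}(\mathfrak{q}R) = \mathfrak{q}$. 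Since $\dim \mathcal{M} = 2$, there are only chains of length $\le 2$, so this reduces to analyzing the height-one primes of $\mathcal{M}$ (there will be two or three, corresponding to the rays of the cone) and checking primality of the corresponding monomial ideals in $R$ using normality of $R$. Once either route establishes regularity of $R/I_\alpha$ and the dimension equality, Definition \ref{log-regular} is satisfied and $(R, \mathcal{M}, \alpha)$ is local log-regular.
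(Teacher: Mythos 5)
Your treatment of the monoid itself (cancellative because $R$ is a domain, reduced because the nontrivial monomials lie in $\fm$, rank $2$ via $\mathcal{M}^{\gp}\subseteq \tfrac 1n\mathbb{Z}\langle x,y\rangle$, root closedness from normality of $R$) is fine and matches the paper's intent. But the log-regularity part contains a genuine error that derails the argument. You claim that $I_\alpha$ is generated by $x$ and $y$, on the grounds that each $z_i$ should lie in $(x,y)$, and you then declare that ``the crux'' is to prove $\fm=(x,y)$. Neither claim is correct, and neither is needed. By definition $I_\alpha$ is generated by $\alpha(\mathcal{M}^+)$, and each $z_i$ is itself a non-unit element of $\mathcal{M}$, so $z_i\in I_\alpha$ automatically; hence $I_\alpha\supseteq (x,y,z_1,\ldots,z_{n-1})=\fm$ directly from Definition \ref{DefJungiandom}, which says these elements \emph{generate} $\fm$. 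Thus $R/I_\alpha$ is the residue field, trivially a regular local ring of dimension $0$, and the remaining condition is just $\dim(\mathcal{M})=\dim(R)=2$. The statement $\fm=(x,y)$ that you propose to extract from normality is false in general: for the $A_1$-singularity $R=k\llbracket s^2,st,t^2\rrbracket$ with $x=s^2$, $y=t^2$, $z_1=st$, one has $z_1^2=xy$ but $st\notin (s^2,t^2)$, so $R/(x,y)\cong k[z_1]/(z_1^2)$ is not regular. Had your identification $I_\alpha=(x,y)$ been right, you would therefore have concluded (incorrectly) that such $R$ fails to be log-regular. Your fallback via condition (\ref{charverysolid}) of Proposition \ref{characterizationlogreg} does not repair this, since that criterion is stated under the standing hypothesis that $R/I_\alpha$ is already regular.

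The paper's proof is exactly the short route you missed: $I_\alpha=\fm$ forces $R/I_\alpha$ to be a field (so regularity of the quotient is free, and reducedness of $\mathcal{M}$ also falls out), and the dimension equality reduces to $\dim(\mathcal{M})=\dim(R)$, checked by observing that the primes of $\mathcal{M}$ are the traces $\fp\cap\mathcal{M}$ of primes of $R$.
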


\begin{proof}
    Since $\mathcal{M}$ is generated by $x,y,z_1,\ldots, z_{i-1}$ and $R$ is a domain, $\mathcal{M}$ is finitely generated and cancellative.
    Pick an element $x \in \mathcal{M}^{\gp}$ such that $x^n \in \mathcal{M}$.
    Since $R$ is normal, we obtain $x \in R$.
    This implies that $x \in \mathcal{M}$ because we can show $\mathcal{M}^{\gp} \cap R = \mathcal{M}$.
    Hence $\mathcal{M}$ is root closed.
    Moreover, it follows from $I_\alpha = \fm$ that $\mathcal{M}$ is reduced and $R/I_\alpha$ is regular.
    Finally, we can easily check that any prime ideal of $\mathcal{M}$ forms $\fp \cap \mathcal{M}$ where $\fp$ is a prime ideal of $R$.
    Hence $\dim (\mathcal{M}) = \dim (R)$.
\end{proof}

It is well-known that a normal affine semigroup ring is Cohen--Macaulay and normal, which is proved by Hochster.
The same assertion holds for a local log-regular ring.

\begin{theorem}
Let $(R, \mathcal{Q}, \alpha)$ be a local log-regular ring.
Then 
$R$ is Cohen-Macaulay and normal.
\end{theorem}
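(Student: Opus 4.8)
The plan is to deduce the Cohen--Macaulay and normal properties of $(R,\mathcal{Q},\alpha)$ by descent from its $\fm$-adic completion $\widehat{R}$, using the structure theorem (Theorem \ref{Katostructure}) to identify $\widehat{R}$ with a complete monoid algebra and Hochster's theorem for the affine semigroup case. The first reduction is harmless: by Remark \ref{rmk2125} we may assume $\mathcal{Q}$ is finitely generated, cancellative, reduced, and root closed, since passing from $\mathcal{Q}$ to $\overline{\mathcal{Q}}$ does not change the ring $R$.

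First I would treat the Cohen--Macaulay property. Since $R$ is Noetherian local, $R$ is Cohen--Macaulay if and only if $\widehat{R}$ is, so it suffices to show $\widehat{R}$ is Cohen--Macaulay. In the equal characteristic case, Theorem \ref{Katostructure}(1) gives $\widehat{R} \cong k\llbracket \mathcal{Q} \oplus \mathbb{N}^r \rrbracket$, which is the completion of the affine semigroup ring $k[\mathcal{Q}\oplus\mathbb{N}^r]$ (a normal semigroup ring, since $\mathcal{Q}$ is root closed, hence saturated, and $\mathbb{N}^r$ is saturated) at the graded maximal ideal. Hochster's theorem says a normal affine semigroup ring is Cohen--Macaulay, and Cohen--Macaulayness is preserved under localization and completion, so $\widehat{R}$ is Cohen--Macaulay. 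In the mixed characteristic case, Theorem \ref{Katostructure}(2) presents $\widehat{R}$ as $A/(\theta)$ where $A = C(k)\llbracket\mathcal{Q}\oplus\mathbb{N}^r\rrbracket$ and $\theta$ has constant term $p$; one shows $\theta$ is a nonzerodivisor on $A$ (its image in $A/pA \cong k\llbracket\mathcal{Q}\oplus\mathbb{N}^r\rrbracket$ is a nonzero element of a domain, so $\theta$ lies outside all associated primes of $A$ once one checks $A$ is a domain and $p$ is a nonzerodivisor), whence $\widehat{R}$ is Cohen--Macaulay because $A$ is (again Hochster, now over the regular ring $C(k)$, together with the fact that $\dim A = \dim\widehat{R}+1$) and we have cut down by one nonzerodivisor.

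Next, normality. By Serre's criterion it is enough to check $(R_1)$ and $(S_2)$; $(S_2)$ follows from Cohen--Macaulayness just proved, so the content is $(R_1)$: $R_\fp$ is regular for every height one prime $\fp$. Here I would again pass to $\widehat{R}$, noting that $R$ is normal iff $\widehat{R}$ is (since $R$ is excellent, or more elementarily: $R \to \widehat{R}$ is faithfully flat with regular, hence Gorenstein, fibers, so normality descends and ascends). In equal characteristic $\widehat{R}$ is the completion of a normal semigroup ring, which is normal because normality of $k[\mathcal{Q}\oplus\mathbb{N}^r]$ passes to its completion (the ring is excellent), and completions of excellent normal local rings are normal. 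In mixed characteristic one argues that $A = C(k)\llbracket\mathcal{Q}\oplus\mathbb{N}^r\rrbracket$ is normal and $\theta$ generates a prime ideal with $A/(\theta)$ satisfying $(R_1)$; alternatively, one may invoke that $\widehat{R}$ is a quotient of a regular ring by a nonzerodivisor and is Cohen--Macaulay, then verify regularity in codimension one directly from the explicit presentation.

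The main obstacle I expect is the mixed characteristic case: establishing that $\theta$ is a nonzerodivisor on $A$ and, more delicately, that $A/(\theta)$ is normal (i.e.\ satisfies $(R_1)$) rather than merely Cohen--Macaulay. This requires understanding the singular locus of the hypersurface section cut out by $\theta$ inside the monoid algebra $A$, and it is here that one genuinely uses that the constant term of $\theta$ is exactly $p$ (so that modulo $p$ one recovers the normal ring $k\llbracket\mathcal{Q}\oplus\mathbb{N}^r\rrbracket$) rather than an arbitrary element. An alternative, cleaner route that sidesteps some of this is to invoke Proposition \ref{characterizationlogreg}: $\alpha$-flatness identifies $\gr_{I_\alpha} R$ with $\gr_{\mathbb{Z}[\mathcal{Q}^+]}(\mathbb{Z}[\mathcal{Q}])\otimes_{\mathbb{Z}} R/I_\alpha$, a normal semigroup ring tensored with a regular ring, and then deduce Cohen--Macaulayness of $R$ from that of its associated graded ring, and normality from Serre's criterion applied after this identification; this keeps the argument uniform across characteristics, though one still pays for the $(R_1)$ verification somewhere.
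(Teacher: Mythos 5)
The paper does not give its own proof of this theorem: it defers entirely to Kato's (4.1) Theorem and to Gabber--Ramero, so there is no in-paper argument to compare against, and your sketch must be judged on its own terms. The Cohen--Macaulay half is sound and is essentially the standard route: reduce to $\widehat{R}$, use Theorem \ref{Katostructure} to identify it with $k\llbracket \mathcal{Q}\oplus\mathbb{N}^r\rrbracket$ or $C(k)\llbracket \mathcal{Q}\oplus\mathbb{N}^r\rrbracket/(\theta)$, invoke Hochster for the normal affine semigroup ring over $k$, handle $C(k)\llbracket \mathcal{Q}\oplus\mathbb{N}^r\rrbracket$ by flatness over the discrete valuation ring $C(k)$ with Cohen--Macaulay fibres, and cut by the nonzerodivisor $\theta$ (nonzero since its constant term is $p$, and the ambient ring is a domain because $\mathcal{Q}^{\gp}$ is torsion free). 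The equal-characteristic normality argument (normal excellent local ring has normal completion, and normality descends along the faithfully flat map $R\to\widehat{R}$) is also fine.

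The genuine gap is exactly where you locate it, and flagging it is not the same as closing it: in mixed characteristic you must prove that $A/(\theta)$ with $A=C(k)\llbracket\mathcal{Q}\oplus\mathbb{N}^r\rrbracket$ satisfies $(R_1)$, and nothing in your sketch does this --- a quotient of a normal domain by a nonzerodivisor is not normal in general (e.g.\ $k\llbracket x,y\rrbracket/(x^2-y^3)$), so the step cannot be waved through, and your ``alternative route'' via $\gr_{I_\alpha}R$ by your own admission still owes the same verification. One concrete way to close it using only tools already in this paper: as in the proof of Proposition \ref{logpseudo}, the exact embedding $\mathcal{Q}\oplus\mathbb{N}^r\hookrightarrow\mathbb{N}^d$ yields a split injection of domains $A/(\theta)\hookrightarrow C(k)\llbracket\mathbb{N}^d\rrbracket/(\theta)$; the target is a \emph{regular} local ring, because the constant term $p$ of $\theta$ forces $\theta\notin\fm^2$ in $C(k)\llbracket\mathbb{N}^d\rrbracket$; and a split subring of a normal domain is normal (if $x\in\Frac(R)$ is integral over $R$ then $x$ lies in the normal overring $T$, and applying the $R$-linear retraction $\pi:T\to R$ to the relation $bx=a$ gives $x=\pi(x)\in R$). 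This simultaneously re-proves the Cohen--Macaulay claim in this case and keeps the argument elementary. The textbook alternative is the localization argument of Kato and Ogus: log-regularity is preserved under localization, and at a height one prime the localized log-regular ring is either regular outright or has monoid of dimension one, hence is a discrete valuation ring, giving $(R_1)$ uniformly in the characteristic. As written, your proposal establishes Cohen--Macaulayness but only asserts normality in mixed characteristic.
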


\begin{proof}
See \cite[(4.1) Theorem]{Kat94} or \cite[Corollary 12.5.13]{GRBook}.
\end{proof}

\begin{remark}\label{submono}
If $\mathcal{Q}$ is finitely generated, cancellative, reduced, and root closed, then there is an exact injection $\mathcal{Q} \hookrightarrow \mathbb{N}^l$ for some $l \in \mathbb{N}$ (see \cite[Chapter {\bf I}, Proposition 1.3.5]{OgusBook} and \cite[Chapter {\bf I}, Corollary 2.2.7]{OgusBook}).
Thus, in the following sections, we assume that a finitely generated, cancellative, reduced, and root closed monoid is a submodule of some $\mathbb{N}^l$.
\end{remark}

\section{Canonical modules of local log-regular rings}\label{SectCanonical}
In this section, we provide an explicit structure of the canonical module of a local log-regular ring.

\begin{theorem}\label{canonicalmod}
Let $(R, \mathcal{Q}, \alpha)$ be a local log-regular ring, where $\mathcal{Q}$ is finitely generated, cancellative, reduced, and root closed (by Remark \ref{submono}, we may assume that $\mathcal{Q} \subseteq \mathbb{N}^l$ for some $l>0$ ).
Let $x_1, \ldots, x_r$ be a sequence of elements of $R$ such that $\overline{x_1}, \ldots, \overline{x_r}$ is a regular system of parameters for $R/I_\mathcal{Q}$.
Then $R$ admits a canonical module and its form is
\begin{equation}\label{canonicallogreg}
\langle (x_1\cdots x_r)\alpha(a)~|~ a \in \relint \mcQ \rangle
\end{equation}
where $\relint \mcQ$ is the relative interior of $\mathcal{Q}$.

\end{theorem}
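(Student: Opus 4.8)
The strategy is to reduce the computation of the canonical module of $R$ to the well-understood case of a complete monoid algebra, via the Cohen-type structure theorem (Theorem \ref{Katostructure}), and then to transfer the description back down to $R$ itself. Since the canonical module of $R$ exists if and only if $\widehat{R}$ admits one and $\omega_{\widehat{R}} \cong \omega_R \otimes_R \widehat{R}$ (using that $R$ is Cohen--Macaulay, hence a homomorphic image of a Gorenstein local ring through its completion), it suffices to identify $\omega_{\widehat{R}}$ as an ideal of $\widehat{R}$ and check that the proposed ideal \eqref{canonicallogreg} of $R$ generates it after completion; faithful flatness of $R \to \widehat{R}$ then lets us conclude that \eqref{canonicallogreg} is itself the canonical module.

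\textbf{Step 1: Reduce to the complete case.} Apply Theorem \ref{Katostructure}. In the equal-characteristic case $\widehat{R} \cong k\llbracket \mcQ \oplus \mathbb{N}^r \rrbracket$, and in the mixed-characteristic case $\widehat{R} \cong C(k)\llbracket \mcQ \oplus \mathbb{N}^r \rrbracket / (\theta)$ where $\theta$ has constant term $p$. In both cases set $S := k\llbracket \mcQ \oplus \mathbb{N}^r \rrbracket$ (resp. $C(k)\llbracket \mcQ \oplus \mathbb{N}^r \rrbracket$), which is a complete monoid algebra over a field or a complete DVR. I would first compute $\omega_S$. The point is that $\mcQ \oplus \mathbb{N}^r$ is again finitely generated, cancellative, reduced and root closed, and the relative interior of $\mcQ \oplus \mathbb{N}^r$ is $\relint\mcQ \oplus \mathbb{Z}^r_{>0}$ (thinking of $\mathbb{N}^r$ as the cone it spans). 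By the classical Danilov--Stanley formula for normal semigroup rings (which holds for the completion as well, since completion is flat and commutes with the relevant local cohomology), $\omega_S$ is the ideal generated by the monomials $e^a$ with $a$ in the relative interior of the semigroup. For $S = k\llbracket \mcQ \oplus \mathbb{N}^r\rrbracket$ this gives $\omega_S = \langle e^{(a, c_1, \dots, c_r)} \mid a \in \relint\mcQ,\ c_i \geq 1 \rangle$, i.e.\ $\omega_S = (y_1\cdots y_r)\cdot \langle e^a \mid a \in \relint\mcQ\rangle$ where $y_i := e^{\mathbf{e}_i}$.

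\textbf{Step 2: Descend through $\phi$ and to $R$.} In equal characteristic this already gives the answer on $\widehat{R}$: under the isomorphism $\phi$, the element $y_i$ goes to $\widehat{x_i}$ and $e^a$ to $\alpha(a)$ (more precisely its image in $\widehat R$), by the last sentence of Theorem \ref{Katostructure}, so $\omega_{\widehat{R}} = (\widehat{x_1}\cdots\widehat{x_r})\langle \alpha(a) \mid a\in\relint\mcQ\rangle$, which is exactly the extension of \eqref{canonicallogreg} to $\widehat R$. In mixed characteristic one has $\widehat{R} = S/(\theta)$ with $\theta$ a nonzerodivisor (since $\dim \widehat R = \dim S - 1$ and $S$ is Cohen--Macaulay), so $\omega_{\widehat R} \cong \omega_S / \theta\omega_S = \omega_S \otimes_S S/(\theta)$; the image of $\omega_S$ in $\widehat R$ is again $(\overline{y_1}\cdots\overline{y_r})\langle \overline{e^a}\mid a\in\relint\mcQ\rangle$, and applying $\phi$ identifies this with the extension of \eqref{canonicallogreg}. (Here one must check $\theta$ is a nonzerodivisor on $\omega_S$ too, which follows since $\omega_S$ is a maximal Cohen--Macaulay $S$-module and $\theta$ is part of a system of parameters, or more directly since $\omega_S$ is a torsion-free ideal in the domain $S$ in the relevant case.) Finally, since $J := \langle (x_1\cdots x_r)\alpha(a)\mid a\in\relint\mcQ\rangle$ is a finitely generated ideal of $R$ with $J\otimes_R\widehat R \cong \omega_{\widehat R} \cong \omega_R\otimes_R\widehat R$ as $\widehat R$-modules, and since a module $M$ over a Cohen--Macaulay local ring with canonical module is canonical iff $\widehat M$ is, faithfully flat descent along $R\to\widehat R$ gives $J\cong\omega_R$. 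The finite generation of $J$ uses that $\relint\mcQ$ contains, up to the $\mcQ$-action, only finitely many ``corner'' generators — concretely, $\langle\alpha(a)\mid a\in\relint\mcQ\rangle$ is the conductor-type ideal of the normal semigroup, which is finitely generated.

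\textbf{Main obstacle.} The delicate point is not the monoid-algebra computation (Step 1 is essentially Danilov--Stanley, with the only subtlety being that $k\llbracket\mcQ\rrbracket$ is a completion rather than a localization — Remark \ref{remarkCompletion} — so one must either quote the graded result and pass to completion by flatness, or run the local-cohomology computation directly on $\widehat R$ at its true maximal ideal). The real work is Step 2 in mixed characteristic: one must verify that cutting by $\theta$ behaves well, namely that $\omega_S/\theta\omega_S$ really is $\omega_{S/(\theta)}$ (adjunction for the canonical module under a regular-element quotient), and that the resulting ideal of $\widehat R$ is generated by the images of the monomials $e^{(a,c)}$ with no collapsing — i.e.\ that the extra relation $\theta$ does not force the ideal $(x_1\cdots x_r)\langle\alpha(a)\rangle$ to be strictly larger than expected. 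This is handled by noting $\theta$ lies in the maximal ideal and has constant term $p$, so modulo $\theta$ the monomial basis of $\omega_S$ maps onto a generating set of $\omega_{\widehat R}$ by Nakayama. I would also need the general fact that a local log-regular ring, being Cohen--Macaulay and (via its completion) a quotient of a Gorenstein ring, admits a canonical module — this is where "$R$ admits a canonical module" in the statement comes from, and it should be recorded explicitly before the descent argument.
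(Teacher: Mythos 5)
Your proposal follows essentially the same route as the paper: reduce to the complete case via Theorem \ref{Katostructure}, identify the canonical module of the complete monoid algebra through the Danilov--Stanley description (the paper obtains it as $\omega_{k[\mcQ]}\otimes_k\omega_{k[\mathbb{N}^r]}$, you via $\relint(\mcQ\oplus\mathbb{N}^r)$ directly---the same computation), handle mixed characteristic by cutting down by a regular element, and descend along $R\to\widehat{R}$ using \cite[Theorem 3.3.14 (b)]{BH98}. The only organizational difference is that the paper certifies the candidate ideal of $C(k)\llbracket\mcQ\oplus\mathbb{N}^r\rrbracket$ by reducing modulo $p$ and invoking the criterion that a faithful maximal Cohen--Macaulay module of type $1$ is canonical, whereas you invoke adjunction for the canonical module under a regular quotient---the same underlying idea.
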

\begin{proof}
First, assume that $R$ is $\fm$-adically complete and separated.
If $R$ is of equal characteristic, then $R$ is isomorphic to $k\llbracket \mcQ \oplus \mbN^r\rrbracket$ by Theorem \ref{Katostructure}.
Let us check that
\begin{equation}\label{canonicalsemigroup}
k[\relint\mcQ \oplus ({\bf e}+\mathbb{N}^r)] := 
 \langle (q, {\bf e}) ~|~ q \in \relint\mcQ \rangle \subseteq k[\mathcal{Q} \oplus \mathbb{N}^r]
\end{equation}
is a canonical module of $k[\mathcal{Q} \oplus \mbN^r]$, where $\textbf{e}=  (1,1,\ldots,1) \in \mathbb{N}^r$.
Indeed, note that we have the ring isomorphism
$k[\mcQ] \otimes_k k[\mbN^r] \cong k[ \mathcal{Q} \oplus \mbN^r]$.
Also note that the canonical module $\omega_{k[\mcQ]} = k[\relint\mathcal{Q}]$ and $\omega_{k[\mathbb{N}^r]} = k[{\bf e}+\mbN^r]$ by \cite[Theorem 6.3.5 (b)]{BH98}.
This induces the following isomorphisms
\begin{equation}\label{4241110}
\omega_{k[\mathcal{Q} \oplus \mathbb{N}^r]} \cong 
\omega_{k[\mcQ]}\otimes_k \omega_{k[\mbN^r]} =
k[\relint\mathcal{Q}] \otimes_k k[{\bf e} + \mbN^r]. 
\end{equation}
If you trace (\ref{4241110}) backwards, then it turns out that $\omega_{k[\mcQ \oplus \mbN^r]}$ is of the form of (\ref{canonicalsemigroup}).
Since $R$ is isomorphic to the completion of $k[\mathcal{Q}\oplus \mathbb{N}^r]$ along a maximal ideal $k[({\mathcal{Q}\oplus \mathbb{N}^r})^+]$, the image of $(\ref{canonicalsemigroup})$ in $R$ is the canonical module of $R$.

If $R$ is of mixed characteristic, then $R$ is isomorphic to $C(k)\llbracket \mathcal{Q} \oplus \mathbb{N}^r\rrbracket /(\theta)$ for some $\theta \in W(k)\llbracket \mathcal{Q} \oplus \mathbb{N}^r\rrbracket$.
If $C(k)\llbracket \mathcal{Q} \oplus \mathbb{N}^r\rrbracket$ has a canonical module, then its image in $R$ is the canonical module of $R$.
Thus it suffices to show the case where $R = C(k)\llbracket \mathcal{Q}\oplus \mathbb{N}^r\rrbracket$.

Set $\omega_R :=   \langle p (q, {\bf e}) ~|~ q \in \relint \mathcal{Q}    \rangle  \subseteq C(k)\llbracket \mcQ \oplus \mathbb{N}^r \rrbracket$.
Note that $\omega_R/p\omega_R$ is a canonical module of $R/pR \cong k\llbracket\mathcal{Q}\oplus \mathbb{N}^r\rrbracket$ and $p$ is a regular element on $R$ and $\omega_R$. Then $\omega_R$ is a maximal Cohen--Macaulay module of type $1$.
Finally, since $R$ is a domain, $\omega_R$ is faithful.
Thus $\omega_R$ is a canonical module of $R$.

Next, let us consider the general case.
We define the ideal $\omega_R$ as (\ref{canonicallogreg}).
Then, by considering the diagrams (\ref{Cohendiag1}) or (\ref{Cohendiag2}), the image of $\omega_R$ in the $\fm$-adic completion of $R$ is the canonical module.
Thus, by \cite[Theorem 3.3.14 (b)]{BH98}, $\omega_R$ is a canonical module of $R$.
\end{proof}

\begin{remark}\label{takagirem}
Set $\omega_R := \langle (x_1\cdots x_r)\alpha(a)~|~ a \in \relint \mcQ \rangle$ and $\omega_R' :=\langle\alpha(a)~|~ a \in \relint \mcQ \rangle$.
Then we note that the homomorphism $\omega_R' \xrightarrow{\times x_1\cdots x_r} \omega_R$ is an isomorphism.
Namely, the ideal of $R$ generated by the image of the relative interior of the associated monoid is also the canonical module of $R$.
\end{remark}

\begin{remark}\label{Robinsonrem}
    In Theorem \ref{canonicalmod}, the case when $R=W(k)\llbracket \sigma^{\lor} \cap M \rrbracket$ follows from the following Robinson's result\footnote{For readers who are not familiar with algebraic geometry, see \cite[Appendix B]{ST12}.}: Set $A:=W(k)[\sigma^{\lor} \cap M]$, where $M$ is a lattice and $\sigma$ is the strongly convex polyhedral cone.
    Set $X = \Spec(A)$. 
    Then one can choose codimension one subschemes $D_1, \ldots, D_n$ of $X$ such that $K_X = -\sum D_i$ is a canonical divisor on $X$.
    Indeed, his result implies that the ideal $\omega_A := \bigcap\fp_i$ is a canonical module of $A$, where $\fp_i$ is the corresponding height one prime ideal to $D_i$.
    By taking the localization and the completion at the maximal ideal $W(k)[(\sigma^\lor \cap M)^+]$, we find that $\omega_A$ is the canonical module of $W(k)\llbracket \sigma^{\lor} \cap M \rrbracket$.
    \end{remark}

As an application of Theorem \ref{canonicalmod}, let us provide a Gorenstein criterion of local log-regular rings.
In order to prove it, we need the following proposition.

\begin{proposition}\label{logregseq}
Let $(R,\mathcal{Q},\alpha)$ be a local log-regular ring.
Let $\underline{x} := x_1, \ldots, x_r$ be a sequence of elements of $R$ such that $\overline{x_1}, \ldots, \overline{x_r}$ is a regular system of parameters for $R/I_{\alpha}$.
Set $R_i:=R/(x_1,\ldots x_i )$ and $\alpha_i : \mathcal{Q} \to R \twoheadrightarrow R_i$.
Then $\underline{x}$ is a regular sequence on $R$ and $(R_i, \mathcal{Q}, \alpha_i)$ is also a local log-regular ring for any $1 \leq i \leq r$.
\end{proposition}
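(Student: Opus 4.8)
The plan is to argue by induction on $i$, the case $i=r$ being the full statement and $i=0$ being vacuous. It suffices to treat the case $i=1$: show that $x_1$ is a nonzerodivisor on $R$, that $(R_1,\mathcal{Q},\alpha_1)$ with $R_1=R/x_1R$ is again a local log-regular ring, and that $\overline{x_2},\dots,\overline{x_r}$ is a regular system of parameters for $R_1/I_{\alpha_1}$; one then replaces $(R,\mathcal{Q},\alpha)$ by $(R_1,\mathcal{Q},\alpha_1)$ and the sequence $x_1,\dots,x_r$ by $x_2,\dots,x_r$ and iterates.

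First I would record that $R$ is a normal domain: local log-regular rings are Cohen--Macaulay and normal, and a Noetherian normal local ring is a domain (a normal Noetherian ring is a finite product of normal domains, while $\Spec R$ is connected). Since $\overline{x_1},\dots,\overline{x_r}$ is a regular system of parameters of the regular local ring $R/I_{\alpha}$, each $\overline{x_j}$ lies in a minimal generating set of the maximal ideal of $R/I_{\alpha}$, hence is nonzero whenever $r\ge 1$; in particular $x_1\neq 0$, so, being a nonzero element of the domain $R$, it is a nonzerodivisor. Granting for the moment that $R_1$ is local log-regular (hence, by the same reasoning, a normal domain) and that $\overline{x_2},\dots,\overline{x_r}$ is a regular system of parameters for $R_1/I_{\alpha_1}$, the inductive step makes $x_2$ a nonzerodivisor on $R_1$, and so on, so that $\underline{x}$ is a regular sequence on $R$.

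It remains to check that $(R_1,\mathcal{Q},\alpha_1)$ satisfies Definition \ref{log-regular}. The ring $R_1$ is Noetherian and local with the same residue field $k$ as $R$, and $\overline{\mathcal{Q}}$ is unchanged; moreover $\alpha_1(q)\in R_1^{\times}$ iff $\alpha(q)\notin\fm$ iff $\alpha(q)\in R^{\times}$, so $\alpha_1^{-1}(R_1^{\times})=\alpha^{-1}(R^{\times})=\mathcal{Q}^{\times}$ and $(R_1,\mathcal{Q},\alpha_1)$ is a local log ring. Next, $I_{\alpha_1}$ is the ideal of $R_1$ generated by $\alpha_1(\mathcal{Q}^{+})$, that is, $I_{\alpha_1}=(I_{\alpha}+x_1R)/x_1R$, so $R_1/I_{\alpha_1}\cong (R/I_{\alpha})/(\overline{x_1})$. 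Since $\overline{x_1}$ is part of a regular system of parameters of the regular local ring $R/I_{\alpha}$, this quotient is regular local and $\overline{x_2},\dots,\overline{x_r}$ is a regular system of parameters for it; this gives condition (1) of Definition \ref{log-regular} together with the fact used in the induction. For condition (2): $x_1$ is a nonzerodivisor on the Cohen--Macaulay ring $R$, so $\dim R_1=\dim R-1$, while $\overline{x_1}$ is a nonzerodivisor on $R/I_{\alpha}$, so $\dim R_1/I_{\alpha_1}=\dim R/I_{\alpha}-1$. Subtracting $1$ from both sides of $\dim R=\dim R/I_{\alpha}+\dim\mathcal{Q}$ gives $\dim R_1=\dim R_1/I_{\alpha_1}+\dim\mathcal{Q}$, as required, so $(R_1,\mathcal{Q},\alpha_1)$ is local log-regular and the induction goes through.

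The argument is essentially bookkeeping once the invariants are set up; the only genuinely non-formal inputs are the use of normality to see that $x_1$ is a nonzerodivisor (so that both dimensions drop by exactly one), and carrying along the induction the facts that $\overline{x_{i+1}},\dots,\overline{x_r}$ remains a regular system of parameters for $R_i/I_{\alpha_i}$ and that $I_{\alpha_i}$ is the image of $I_{\alpha}$. An alternative, less elementary route would pass to the completion and invoke Theorem \ref{Katostructure}: there $\widehat{R}$ is $k\llbracket\mathcal{Q}\oplus\mathbb{N}^{r}\rrbracket$ (or its mixed-characteristic analogue) with $x_j\mapsto\mathbf{e}_j$, so killing $x_1,\dots,x_i$ lands on $k\llbracket\mathcal{Q}\oplus\mathbb{N}^{r-i}\rrbracket$, which is manifestly log-regular; log-regularity of $R_i$ itself would then follow from $\alpha_i$-flatness (Proposition \ref{characterizationlogreg}) together with faithful flatness of $R_i\to\widehat{R_i}$. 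I would nonetheless present the direct verification above, as it is shorter.
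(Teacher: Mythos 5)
Your proof is correct and follows essentially the same route as the paper's: reduce by induction to the case $i=1$, use that $R$ is a (normal, hence) domain so that $x_1$ is a nonzerodivisor, identify $R_1/I_{\alpha_1}$ with $(R/I_{\alpha})/(\overline{x_1})$ to get regularity of the quotient, and conclude condition (2) by the dimension count. The only cosmetic difference is that you verify directly that $(R_1,\mathcal{Q},\alpha_1)$ is a local log ring, where the paper cites an external lemma, and you spell out the dimension bookkeeping slightly more explicitly.
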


\begin{proof}
Since a local homomorphism preserves the locality of the log structure (see \cite[Lemma 2.16]{INS22}), $(R_i, \mathcal{Q}, \alpha_i)$ is a local log ring.
By the induction for $i$, it suffices to check the case $i=1$.
Since $R$ is a domain, $x_1$ is a regular element.
Thus we obtain the isomorphism $R_1/I_{\alpha_1} \cong (R/I_{\alpha})/x_1(R/I_{\alpha})$.
Since the image of $x_1$ is a regular element on $R/I_{\alpha}$ by the assumption and $R/I_{\alpha}$ is a regular local ring, $R_1/I_{\alpha_1}$ is regular.
Moreover, the above isomorphism implies that the equality
$\dim (R_1/I_{\alpha_1}) = \dim (R_1) -\dim(\mathcal{Q})$
holds.
Thus $(R_1, \mathcal{Q}, \alpha_1)$ is a local log-regular ring.
\end{proof}

\begin{corollary}\label{logGorenstein}
Keep the notation as in Theorem \ref{canonicalmod}.
The following assertions are equivalent.
\begin{enumerate}
\item
$R$ is Gorenstein.
\item
For a fixed field $k$, $k[\mathcal{Q}]$ is Gorenstein.
\item
There exists an element $c \in \relint Q$ such that $\relint \mathcal{Q} = c+\mathcal{Q}$.
\end{enumerate}
\end{corollary}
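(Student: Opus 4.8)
The plan is to exploit the two tools just developed: the explicit canonical module from Theorem \ref{canonicalmod} and the reduction to monoid algebras furnished by Theorem \ref{Katostructure}. First I would establish $(1) \Leftrightarrow (2)$. A Noetherian local ring is Gorenstein if and only if it admits a canonical module isomorphic to the ring itself; combining this with the fact that $R$ is Cohen--Macaulay, Gorensteinness is a property that ascends and descends along the faithfully flat completion map $R \to \widehat{R}$, so $R$ is Gorenstein iff $\widehat{R}$ is. By Theorem \ref{Katostructure}, $\widehat{R}$ is either $k\llbracket \mcQ \oplus \mbN^r \rrbracket$ (equal characteristic) or a hypersurface quotient $C(k)\llbracket \mcQ \oplus \mbN^r\rrbracket/(\theta)$ with $\theta$ a nonzerodivisor (mixed characteristic). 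In the equal characteristic case, $k\llbracket \mcQ \oplus \mbN^r\rrbracket$ is the completion of $k[\mcQ \oplus \mbN^r] \cong k[\mcQ] \otimes_k k[\mbN^r]$ at a maximal ideal, so it is Gorenstein iff $k[\mcQ \oplus \mbN^r]$ is Gorenstein at that maximal ideal, and since $k[\mbN^r]$ is a polynomial ring, this happens iff $k[\mcQ]$ is Gorenstein; in mixed characteristic, a hypersurface over $C(k)\llbracket \mcQ \oplus \mbN^r\rrbracket$ is Gorenstein iff $C(k)\llbracket \mcQ \oplus \mbN^r\rrbracket$ is, and one reduces modulo $p$ (a nonzerodivisor) to the equal characteristic statement. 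Either way $(1) \Leftrightarrow (2)$.

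For $(2) \Leftrightarrow (3)$ I would invoke the classical criterion for Gorensteinness of normal affine semigroup rings: by \cite[Theorem 6.3.5 (b)]{BH98}, the canonical module of $k[\mcQ]$ is $k[\relint\mcQ]$, and $k[\mcQ]$ is Gorenstein precisely when this canonical module is a principal (rank-one free) ideal, i.e.\ when $\relint\mcQ = c + \mcQ$ for some $c$; necessarily $c \in \relint\mcQ$ since $c = c + 0$ and $0$ lies in the closure but the relative interior of a cone satisfies $\relint\mcQ + \mcQ \subseteq \relint\mcQ$, forcing $c \in \relint\mcQ$. This is essentially \cite[Theorem 6.3.5 (a)]{BH98} or the original result of Stanley; I would cite it rather than reprove it.

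Alternatively—and this is the route I would actually prefer to write out, since it keeps everything intrinsic to $R$—I would use Theorem \ref{canonicalmod} directly. By that theorem $R$ has canonical module $\omega_R = \langle (x_1\cdots x_r)\alpha(a) \mid a \in \relint\mcQ\rangle$, and by Remark \ref{takagirem} this is isomorphic as an $R$-module to $\omega_R' = \langle \alpha(a) \mid a \in \relint\mcQ \rangle$. Now $R$ is Gorenstein iff $\omega_R'$ is a principal ideal (equivalently, free of rank one). If $\relint\mcQ = c + \mcQ$ then $\omega_R' = \alpha(c)\langle \alpha(q) \mid q \in \mcQ\rangle = \alpha(c) R$ is principal, giving $(3) \Rightarrow (1)$. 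Conversely, if $\omega_R'$ is principal, say $\omega_R' = \alpha(c) R$ for some generator which, by a minimal generator argument using that the $\alpha(a)$ are monomials and $\alpha^\bullet$ is exact, may be taken of the form $\alpha(c)$ with $c \in \relint\mcQ$; then comparing the two $\mcQ$-graded ideals forces $\relint\mcQ = c + \mcQ$, giving $(1) \Rightarrow (3)$. The exactness of $\alpha^\bullet$ (established in the lemma above) together with the fact that $\fq R$ is prime with $\alpha^{-1}(\fq R) = \fq$ (Theorem \ref{logregequiv}) is what lets one transfer the combinatorial structure of $\mcQ$ faithfully into the ideal theory of $R$.

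The main obstacle is the $(1) \Rightarrow (3)$ direction in the intrinsic argument: one must show that a principal generator of $\omega_R'$ can be chosen to be a monomial $\alpha(c)$ and that the containment $\alpha(c)R \supseteq \omega_R'$ reads off the monoid equality $c + \mcQ \supseteq \relint\mcQ$. This is where I would lean on the $\mcQ$-grading inherited from $\alpha$ (so that $\omega_R'$ is a $\mcQ$-graded ideal whose graded pieces are indexed exactly by $\relint\mcQ$) and on the fact that $R^\bullet$ is cancellative, so that $\alpha(c) \alpha(q) = \alpha(a)$ in $R$ forces $c + q = a$ in $\mcQ^{\gp}$, hence in $\mcQ$. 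Given the length constraints of the paper, I expect the cleanest writeup simply routes through $(1)\Leftrightarrow(2)$ by completion and $(2)\Leftrightarrow(3)$ by \cite[Theorem 6.3.5]{BH98}, relegating the intrinsic argument to a remark.
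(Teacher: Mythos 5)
Your proposal is correct and your preferred ``intrinsic'' route is essentially the paper's own proof: the paper cites \cite[Theorem 6.3.5 (a)]{BH98} for $(2)\Leftrightarrow(3)$ and proves $(1)\Leftrightarrow(3)$ by reducing via completion and the regular sequence $x_1,\ldots,x_r$ to $\omega_R=\langle\alpha(a)\mid a\in\relint\mcQ\rangle$, then using the injectivity and exactness of $\alpha^\bullet$ to turn principality of $\omega_R$ into $\relint\mcQ=c+\mcQ$. The one step you flag as the main obstacle---choosing the principal generator to be a monomial $\alpha(c)$ with $c\in\relint\mcQ$---is handled exactly as you suggest (Nakayama applied to the monomial generating set), so there is no gap.
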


\begin{proof}
The equivalence of (2) and (3) is well-known (for example, see \cite[Theorem 6.3.5 (a)]{BH98}). 
Thus it suffices to show the equivalence of (1) and (3).
Since the Gorenstein property of $R$ is preserved under the completion and the quotient by a regular sequence, one can assume that $\alpha$ is injective by Theorem \ref{Katostructure} and that $\dim (R) = \dim (\mcQ)$ by Proposition \ref{logregseq}.
Hence $\omega_R = \langle \alpha(x)~|~ x \in \relint\mathcal{Q} \rangle$.
Now, assume that $R$ is Gorenstein.
There exists an element $c \in \relint \mcQ$ such that $\omega_R = \langle \alpha(c) \rangle$.
This implies that for any $a \in \relint \mcQ$, there exists $x \in R$ such that $\alpha(a) = \alpha(c)x$.
Since we have $\alpha(a) = \alpha^\bullet(a)$ and $\alpha(c) = \alpha^\bullet(c)$ by Lemma \ref{2241637}, we obtain 
\begin{equation}\label{2241639}
\alpha^\bullet(a) = \alpha^\bullet(c)x.
\end{equation}
Hence $x = \alpha^\bullet(a-c) \in \im\bigl(({\alpha^\bullet})^{\gp}\bigr)$.
Since $\alpha^\bullet$ is exact, we obtain $x \in \im\alpha^\bullet$.
Now, there exists $y \in \mathcal{Q}$ such that $x=\alpha^\bullet(y)$.
By (\ref{2241639}) and the injectivity of $\alpha^\bullet$, we obtain $a =c+y\in c+\mcQ$.
Hence $\relint \mathcal{Q} \subseteq c+\mcQ$.
Since $\relint \mcQ$ is an ideal of $\mathcal{Q}$, the converse inclusion holds.
Therefore we obtain $\relint \mcQ = c+\mathcal{Q}$.

Conversely, assume that $\relint \mcQ=c+\mathcal{Q}$ for some $c \in \relint \mcQ$.
Then we obtain the equalities $\omega_R = \alpha(c) \langle \alpha(x) \in R~|~ x \in \mcQ \rangle =\langle \alpha(c)\rangle$.
This implies that $R$ is Gorenstein, as desired.
\end{proof}

If a Cohen--Macaulay local ring has a canonical module, it is a homomorphic image of a Gorenstein local ring.
Namely, we obtain the following corollary.

\begin{corollary}\label{homomoim}
Let $(R,\mathcal{Q}, \alpha)$ be a local log-regular ring.
Then $R$ is a homomorphic image of a Gorenstein local ring.
\end{corollary}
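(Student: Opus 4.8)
The plan is to invoke the standard fact from commutative algebra: a Cohen--Macaulay local ring that admits a canonical module is a homomorphic image of a Gorenstein local ring (this is \cite[Theorem 3.3.6]{BH98}). So the entire content of the proof is the verification of the two hypotheses of that theorem for $(R,\mathcal{Q},\alpha)$. First I would recall that $R$ is Cohen--Macaulay: this is precisely the content of the theorem cited just above (\cite[(4.1) Theorem]{Kat94} or \cite[Corollary 12.5.13]{GRBook}). Second, I would appeal to Theorem \ref{canonicalmod} to conclude that $R$ admits a canonical module. Here one small bookkeeping point must be addressed: Theorem \ref{canonicalmod} is stated for a monoid $\mathcal{Q}$ that is finitely generated, cancellative, reduced, and root closed, whereas the corollary allows an arbitrary $\mathcal{Q}$ satisfying only the defining conditions of a local log-regular ring (so $\overline{\mathcal{Q}} = \mathcal{Q}/\mathcal{Q}^\times$ has those properties, but $\mathcal{Q}$ itself need not be reduced). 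By Remark \ref{rmk2125}, the log structure $(R,\mathcal{Q},\alpha)$ gives rise to the log structure $(R,\overline{\mathcal{Q}},\overline{\alpha})$ with $\overline{\mathcal{Q}}$ finitely generated, cancellative, reduced, and root closed, and $(R,\overline{\mathcal{Q}},\overline{\alpha})$ is again a local log-regular ring. Applying Theorem \ref{canonicalmod} to $(R,\overline{\mathcal{Q}},\overline{\alpha})$ shows that $R$ admits a canonical module, which is the intrinsic property of $R$ we need.

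With both hypotheses in hand, the cited theorem yields immediately that $R$ is a homomorphic image of a Gorenstein local ring, which completes the argument. There is essentially no obstacle here of a technical nature; the only thing to be careful about is the reduction from $\mathcal{Q}$ to $\overline{\mathcal{Q}}$ so that Theorem \ref{canonicalmod} applies verbatim, and this is handled cleanly by Remark \ref{rmk2125}. The proof will therefore be short:

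\begin{proof}
By \cite[(4.1) Theorem]{Kat94}, $R$ is Cohen--Macaulay. By Remark \ref{rmk2125}, replacing $\mathcal{Q}$ by $\overline{\mathcal{Q}} = \mathcal{Q}/\mathcal{Q}^\times$ we may assume that $\mathcal{Q}$ is finitely generated, cancellative, reduced, and root closed; then Theorem \ref{canonicalmod} shows that $R$ admits a canonical module. A Cohen--Macaulay local ring possessing a canonical module is a homomorphic image of a Gorenstein local ring by \cite[Theorem 3.3.6]{BH98}, so $R$ is a homomorphic image of a Gorenstein local ring.
\end{proof}
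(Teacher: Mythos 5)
Your proposal is correct and matches the paper's (implicit) argument exactly: the paper derives the corollary from the sentence immediately preceding it, namely that a Cohen--Macaulay local ring with a canonical module is a homomorphic image of a Gorenstein local ring, combined with Theorem \ref{canonicalmod}. Your extra care in reducing to $\overline{\mathcal{Q}}$ via Remark \ref{rmk2125} is a sensible bookkeeping step that the paper leaves implicit.
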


Next, we prove that local log-regular rings are pseudo-rational. 
See \cite{LT81} or \cite{HM18} for the definition of pseudo-rationality.
The following theorem in equal characteristic is called Boutot's theorem.
It is proved in \cite{Bou87} and \cite{HH90}, and the analogue in mixed characteristic is proved in \cite{HM18}.

\begin{theorem}[{\cite{Bou87}, \cite{HH90}, \cite{HM18}}]\label{pseudorat}
 Let $(R, \fm) \to (S, \fn)$ be a pure map of local rings such that $(S,\fn)$ is regular.
Then $R$ is pseudo-rational.
In particular, direct summands of regular rings are pseudo-rational.
\end{theorem}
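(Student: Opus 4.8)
The plan is to reduce the assertion to the known structure of pure subrings of regular rings, treating the residue characteristics separately. Recall that $R$ is pseudo-rational when, first, $R$ is normal, Cohen--Macaulay and analytically unramified and, second, for every proper birational morphism $f\colon W\to\Spec R$ with $W$ normal, writing $d=\dim R$ and $E=f^{-1}(\fm)$, the natural map $H^d_\fm(R)\to H^d_E(\mathcal O_W)$ is injective. First I would dispose of the structural conditions: a pure subring of a normal ring is normal; a pure subring of a regular ring is Cohen--Macaulay (the Hochster--Roberts theorem in equal characteristic zero, Hochster--Huneke's tight closure theory \cite{HH90} in characteristic $p$, and a consequence of the existence of big Cohen--Macaulay algebras of Andr\'e and Bhatt in mixed characteristic); and $R$ is reduced since $S$ is, hence analytically unramified in the cases of interest here, where $\widehat{R}$ is reduced by Theorem~\ref{Katostructure}. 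In particular $R$ is a normal local domain, so ``birational'' makes sense, and it remains to prove the cohomological injectivity.

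For that I would transport the problem to the regular ring $S$ along the pure map, following Boutot's base-change strategy. Given a normal modification $f\colon W\to\Spec R$, choose a normal modification $f'\colon W'\to\Spec S$ dominating $W\times_{\Spec R}\Spec S$ (which maps birationally onto $\Spec S$ since $R\hookrightarrow S$ and both rings are domains), so that there is a dominant morphism $W'\to W$ compatible with $\Spec S\to\Spec R$. Since $S$ is regular it is pseudo-rational \cite{LT81}, so the analogous map for $S$ and $W'$ is injective. On the other hand purity gives that $H^d_\fm(R)$ injects into the local cohomology of $S$: purity is inherited by the base changes $R/(\underline x^{\,t})\to S/(\underline x^{\,t})S$ along a system of parameters $\underline x$ of $R$, and one passes to the colimit in the \v{C}ech description of $H^d_\fm$. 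Chasing the resulting diagram of modifications exactly as in Boutot's argument then forces $H^d_\fm(R)\to H^d_E(\mathcal O_W)$ to be injective. In equal characteristic zero one takes $W'$ to be a resolution and this is literally \cite{Bou87}; in characteristic $p$ the same conclusion is packaged by tight closure (pure subrings of regular rings are $F$-rational, and $F$-rational local rings are pseudo-rational) as in \cite{HH90}; and in mixed characteristic it is \cite{HM18}, where resolutions are replaced by big Cohen--Macaulay algebras produced by perfectoid methods.

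The final sentence follows: if $R$ is an $R$-module direct summand of $S$ then $R\hookrightarrow S$ is pure, so the first part applies whenever $S$ is regular. I expect the main obstacle to be the mixed-characteristic case: it is beyond the reach of resolution of singularities and genuinely needs the big Cohen--Macaulay and perfectoid technology underlying \cite{HM18}; one must also keep track of which ambient hypotheses (excellence, or a dualizing complex) are required both for the definition of pseudo-rationality and for the base-change bookkeeping to be legitimate, although these are all satisfied in the applications to local log-regular rings.
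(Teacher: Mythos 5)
The paper gives no proof of this statement: it is imported verbatim from the literature, with the equal characteristic zero, positive characteristic, and mixed characteristic cases delegated to \cite{Bou87}, \cite{HH90}, and \cite{HM18} respectively, exactly as your sketch apportions them. Your outline of those arguments (purity forcing injectivity on top local cohomology via the \v{C}ech/colimit description, Boutot-style base change to a modification of $\Spec S$, $F$-rationality together with Smith's theorem in characteristic $p$, and big Cohen--Macaulay/perfectoid methods in mixed characteristic) is an accurate roadmap of the cited proofs, so there is nothing in the paper to compare it against beyond the citations themselves.
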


Applying this theorem, we obtain the pseudo-rationality of a local log-regular ring.

\begin{proposition}\label{logpseudo}
Let $(R,\mathcal{Q}, \alpha)$ be a local log-regular ring.
Then $R$ is pseudo-rational.
\end{proposition}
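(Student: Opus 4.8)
The plan is to reduce to the complete case and then invoke Theorem \ref{pseudorat}. Pseudo-rationality is a property that can be checked after completion: a Noetherian local ring $(R,\fm)$ is pseudo-rational if and only if its $\fm$-adic completion $\widehat{R}$ is pseudo-rational (this follows from flat base change for local cohomology together with the fact that the normality and analytic-unramifiedness conditions in the definition ascend and descend along $R \to \widehat R$; see \cite{LT81} or \cite{HM18}). Since a local log-regular ring is excellent-behaved enough for this to apply — indeed $R$ is Cohen--Macaulay and normal — it suffices to prove the proposition when $R$ is $\fm$-adically complete.

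So assume $R$ is complete. By Theorem \ref{Katostructure}, in the equal characteristic case $R \cong k\llbracket \mcQ \oplus \mbN^r \rrbracket$, and in the mixed characteristic case $R \cong C(k)\llbracket \mcQ \oplus \mbN^r \rrbracket/(\theta)$ where $\theta$ has constant term $p$. The key point is that in either case $R$ is a \emph{splinter} — equivalently a direct summand of a regular ring — by the results recalled in the introduction: Gabber--Ramero constructed a perfectoid $R$-algebra exhibiting $R$ as a splinter (\cite[\S 17.2]{GRBook}), and an elementary proof via the (now theorem) Direct Summand Conjecture appears in \cite[Theorem 2.27]{INS22}. Concretely, the complete monoid algebra $k\llbracket \mcQ \oplus \mbN^r\rrbracket$ is a direct summand of $k\llbracket \mbN^{l+r}\rrbracket = k\llbracket z_1,\dots,z_{l+r}\rrbracket$ via the splitting coming from the inclusion $\mcQ \hookrightarrow \mbN^l$ of Remark \ref{submono} (the monoid-algebra inclusion of a saturated submonoid is split by the usual "keep only the exponents in $\mcQ$" projection, which is $k\llbracket \mcQ\oplus\mbN^r\rrbracket$-linear); in mixed characteristic one uses the Direct Summand Theorem to split $R \to R^+$ through a regular ring. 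Thus $R$ is a direct summand — in particular, the inclusion $R \hookrightarrow S$ into the relevant regular ring $S$ is pure.

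Having realized $R$ as a pure subring of a regular local ring $S$, Theorem \ref{pseudorat} applies directly and gives that $R$ is pseudo-rational. Combining this with the completion-descent from the first paragraph finishes the proof for a general (not necessarily complete) local log-regular ring.

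The main obstacle I anticipate is the bookkeeping around completion: one must be careful that $R\llbracket \mcQ\rrbracket$-type rings are not themselves $\fm$-adically complete (Remark \ref{remarkCompletion}), so the phrase "complete local log-regular ring" in Theorem \ref{Katostructure} refers to $\widehat R$, and one needs that pseudo-rationality descends along $R \to \widehat R$ for the Cohen--Macaulay normal ring $R$ — this is where I would cite the behavior of pseudo-rationality under completion in \cite{HM18}. The splinter/direct-summand input, by contrast, is essentially quoted wholesale from \cite{INS22} or \cite{GRBook}, so the only real content is assembling these pieces; the alternative, bypassing splinters and instead verifying the surjectivity-of-the-map-on-local-cohomology condition directly on the explicit monoid algebra, would be more laborious and I would avoid it.
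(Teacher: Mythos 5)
Your overall strategy --- reduce to the complete case, realize the complete ring as a pure subring of a regular local ring, and apply Theorem \ref{pseudorat} --- is exactly the paper's, and your equal-characteristic branch is correct: the exact inclusion $\mcQ\oplus\mbN^r\hookrightarrow\mbN^{l+r}$ of Remark \ref{submono} admits a $\mcQ\oplus\mbN^r$-linear retraction, so $k\llbracket\mcQ\oplus\mbN^r\rrbracket$ is a direct summand of the regular ring $k\llbracket\mbN^{l+r}\rrbracket$. But two steps need repair. First, the descent of pseudo-rationality along $R\to\widehat R$ is not just ``flat base change for local cohomology'': the paper invokes \cite[Proposition 4.20]{Mur22}, whose hypothesis is the existence of a canonical module, which is why Theorem \ref{canonicalmod} is quoted at the outset. ``Cohen--Macaulay and normal'' is not the hypothesis that makes this step work; you should instead note that $R$ admits a canonical module (equivalently, by Corollary \ref{homomoim}, is a homomorphic image of a Gorenstein local ring, hence has a dualizing complex) and then cite that result.

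Second, and more substantively, your mixed-characteristic branch does not produce what Theorem \ref{pseudorat} requires. Asserting that $R$ is a splinter, or that the Direct Summand Theorem splits $R\to R^+$, does not exhibit $R$ as a pure subring of a \emph{regular local} ring: $R^+$ is not regular, the Direct Summand Theorem only splits module-finite extensions, and ``splinter'' is a priori weaker than ``direct summand of a regular ring,'' which is the actual hypothesis of Theorem \ref{pseudorat}. The correct fix --- and what the paper does, following \cite[Theorem 2.27]{INS22} --- is to run the same monoid-inclusion construction as in equal characteristic and then pass to the quotient by $\theta$: the split injection $C(k)\llbracket\mcQ\oplus\mbN^r\rrbracket\hookrightarrow C(k)\llbracket\mbN^d\rrbracket$ remains split after killing $\theta$ on both sides, and $C(k)\llbracket\mbN^d\rrbracket/(\theta)$ is a regular local ring because the constant term of $\theta$ is $p$. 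With these two corrections your argument coincides with the paper's proof.
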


\begin{proof}
Since a local log-regular ring has the canonical module by Theorem \ref{canonicalmod}, by applying \cite[Proposition 4.20]{Mur22}, it suffices to show that a complete local log-regular ring is pseudo-rational.
Thus we may assume that $R$ is $\fm$-adically complete and separated.
Namely, $R$ is isomorphic to either 
$k\llbracket \mathcal{Q} \oplus \mathbb{N}^r \rrbracket$ or $C(k)\llbracket \mathcal{Q} \oplus \mathbb{N}^r\rrbracket/(\theta)$.
Now, we prove that $R$ is the direct summand of a regular local ring.
Our approach is the same as in the proof of \cite[Theorem 2.27]{INS22}, so we give the sketch of the proof here.
We refer the reader to it for details.
Since the same argument is made, we will show the case $R \cong C(k)\llbracket \mathcal{Q} \oplus \mathbb{N}^r\rrbracket/(\theta)$.
An embedding $\mathcal{Q} \hookrightarrow \mathbb{N}^r$ given in Remark \ref{submono} induces a split injection $C(k)[ \mathcal{Q} \oplus \mathbb{N}^r ] \hookrightarrow C(k)[\mathbb{N}^d]$ for some $d >0$.
This induces the split injection $C(k)\llbracket \mathcal{Q} \oplus \mathbb{N}^r \rrbracket \hookrightarrow C(k)\llbracket \mathbb{N}^d \rrbracket$.
By taking the quotient by some element $\theta \in A\llbracket \mathcal{Q} \oplus \mathbb{N}^r \rrbracket$, we also obtain the split injection $C(k)\llbracket \mathcal{Q} \oplus \mathbb{N}^r \rrbracket/(\theta) \hookrightarrow C(k)\llbracket \mathbb{N}^d \rrbracket/(\theta)$.
Finally, applying Theorem \ref{pseudorat}, we obtain the desired claim.
\end{proof}

\begin{remark}
    There is another way to prove Proposition \ref{logpseudo} in  the equal characteristic cases.
    If $R$ is an $F$-finite complete local log-regular ring, then it is a strongly $F$-regular ring.
    Since strong $F$-regularity implies $F$-rationality, $R$ is $F$-rational.
    Hence we obtain $R$ is pseudo-rational because an $F$-rational ring is pseudo-rational by \cite[Theorem 3.1]{Smi97}.
    Also, the equal characteristic $0$ case is due to \cite[Main Theorem A]{Schou08} and the above discussion.
\end{remark}

At the last of this section, we determine the form of Gorenstein local log-regular rings consisting of two-dimensional monoids by using Corollary \ref{logGorenstein}.

\begin{proposition}\label{exampletwodim}
Let $(R, \mcQ, \alpha )$ be a local log-regular ring where $\mcQ$ is finitely generated, cancellative, reduced, and root closed.
Assume that $\mcQ$ is two-dimensional.
Then $R$ is Gorenstein if and only if $\mathcal{Q}$ is isomorphic to the submonoid of $\mathbb{N}^2$ generated by $(n+1,0), (1,1), (0, n+1)$ for some $n \geq 1$.
\end{proposition}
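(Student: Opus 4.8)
The plan is to reduce the assertion, via Corollary~\ref{logGorenstein}, to a purely combinatorial classification of two-dimensional monoids, and then to settle that classification using the normal form of a two-dimensional rational cone.

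First I would invoke Corollary~\ref{logGorenstein}: $R$ is Gorenstein if and only if there is $c\in\relint\mcQ$ with $\relint\mcQ=c+\mcQ$, a condition on $\mcQ$ alone. Since $\mcQ$ is finitely generated, cancellative, reduced and root closed of dimension $2$, Lemma~\ref{transformation} and Remark~\ref{submono} let me take $\mcQ^{\gp}=\mathbb{Z}^2$ and $\mcQ=C\cap\mathbb{Z}^2$, where $C\subseteq\mathbb{R}^2$ is the rational cone generated by $\mcQ$, which is a two-dimensional strongly convex rational polyhedral cone (Gordan's lemma together with root-closedness, and reducedness for strong convexity). Writing $C=\{x\mid\ell_1(x)\ge0,\ \ell_2(x)\ge0\}$ with $\ell_1,\ell_2\in\Hom(\mathbb{Z}^2,\mathbb{Z})$ its two primitive, linearly independent inner facet normals, one reads off $\relint\mcQ=\{x\in\mathbb{Z}^2\mid\ell_1(x)\ge1,\ \ell_2(x)\ge1\}$ and $c+\mcQ=\{x\in\mathbb{Z}^2\mid\ell_1(x)\ge\ell_1(c),\ \ell_2(x)\ge\ell_2(c)\}$ for $c\in\mathbb{Z}^2$. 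The key elementary step is then the equivalence: $\relint\mcQ=c+\mcQ$ for some $c\in\relint\mcQ$ if and only if there is $c\in\mathbb{Z}^2$ with $\ell_1(c)=\ell_2(c)=1$. The ``if'' direction is immediate from the two descriptions; for ``only if'', such a $c$ has $\ell_i(c)\ge1$, and if, say, $\ell_1(c)\ge2$, then I would pick $x_0\in\mathbb{Z}^2$ with $\ell_1(x_0)=1$ and translate $x_0$ along a generator of $\ker\ell_1$ — on which $\ell_2$ is non-constant, as $\ell_1,\ell_2$ are independent — to arrange $\ell_2(x_0)\ge1$ as well; this $x_0$ lies in $\relint\mcQ$ but not in $c+\mcQ$, a contradiction, and symmetrically for $\ell_2$.

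Then I would run the classification. Acting on $\mcQ^{\gp}$ by $\GL_2(\mathbb{Z})$ (which changes neither the isomorphism type of $\mcQ$ nor the last condition), I may arrange $\ell_1=(1,0)$; then $\ell_2=(p,q)$ with $\gcd(p,q)=1$, and after the reflection $x_2\mapsto-x_2$ and shears $x_2\mapsto x_2-kx_1$ also $q\ge1$ and $0\le p<q$. The system $\ell_1(c)=\ell_2(c)=1$ has an integral solution exactly when $q\mid 1-p$, \ie $p\equiv1\pmod q$; given $0\le p<q$ this forces either $q=1,\ p=0$ (so $C$ is the first quadrant and $\mcQ\cong\mathbb{N}^2$), or $q\ge2,\ p=1$. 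In the latter case $C$ is the cone on the rays $(0,1)$ and $(q,-1)$, so a direct computation gives $\mcQ=\langle(1,0),(0,1),(q,-1)\rangle$, and the change of $\mathbb{Z}$-basis of $\mathbb{Z}^2$ sending $(1,0),(0,1)$ to $(1,1),(q,0)$ carries this monoid onto $\langle(q,0),(1,1),(0,q)\rangle$, \ie onto the monoid of the statement with $n=q-1\ge1$. For the converse I would check that when $\mcQ\cong\langle(n+1,0),(1,1),(0,n+1)\rangle$, the point $c=(1,0)$ (in the coordinates above) lies in $\relint\mcQ$ and satisfies $\ell_1(c)=\ell_2(c)=1$, whence $R$ is Gorenstein.

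The step I expect to be the main obstacle is the bookkeeping in the last paragraph: putting the cone in normal form and, in the Gorenstein case, matching the presentation $\langle(1,0),(0,1),(q,-1)\rangle$ with the stated $\langle(n+1,0),(1,1),(0,n+1)\rangle$ — these are submonoids of $\mathbb{Z}^2$ generating different subgroups, so the identification is an abstract monoid isomorphism induced by a basis change, not by an element of $\GL_2(\mathbb{Z})$ fixing the ambient lattice. One should also record the degenerate possibility $\mcQ\cong\mathbb{N}^2$: there $R$ is regular (or a hypersurface), hence Gorenstein, and this is the $n=0$ instance of the displayed family, so the statement is to be understood for the non-free monoids ($n\ge1$).
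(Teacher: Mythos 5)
Your proposal is correct, and for the combinatorial core it takes a genuinely different route from the paper. The paper also starts from Corollary~\ref{logGorenstein}, but it uses equivalence with condition (2): it passes to the toric ring $k[\mathcal{Q}]$ over an algebraically closed field, invokes the known classification of two-dimensional Gorenstein normal toric singularities as the $A_n$-singularities to get $k[\mathcal{Q}]\cong k[\mathcal{P}]$ with $\mathcal{P}=\langle(n+1,0),(1,1),(0,n+1)\rangle$, and then appeals to Gubeladze's isomorphism theorem (\cite[Theorem 2.1 (b)]{Gub98}) to conclude $\mathcal{Q}\cong\mathcal{P}$ from the ring isomorphism. You instead work directly with condition (3), translate it via the two primitive facet normals $\ell_1,\ell_2$ of the cone into the existence of $c\in\mathbb{Z}^2$ with $\ell_1(c)=\ell_2(c)=1$, and classify by an explicit $\GL_2(\mathbb{Z})$ normal form. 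Your argument is entirely self-contained: it never leaves the monoid, so it needs neither the toric classification nor Gubeladze's theorem, at the cost of the normal-form bookkeeping you flag (which you handle correctly, including the point that the identification with $\mathcal{P}$ is an abstract isomorphism of monoids via a basis change of $\mathcal{Q}^{\gp}$ onto $\mathcal{P}^{\gp}$, not a lattice automorphism of $\mathbb{Z}^2$). Your observation about the degenerate case $\mathcal{Q}\cong\mathbb{N}^2$ is also a genuine catch: there $R$ is regular, hence Gorenstein, yet $\mathbb{N}^2$ is not isomorphic to any member of the family with $n\geq 1$ (it has only two irreducible elements), so the forward implication as literally stated fails for the free monoid; the statement should either allow $n=0$ or exclude $\mathcal{Q}\cong\mathbb{N}^2$, a point the paper's proof passes over silently.
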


\begin{proof}
By Corollary \ref{logGorenstein}, one can reduce to the case of a toric ring $k[\mathcal{Q}]$ where $k$ is an algebraically closed field, and in this case, we know that there exists $n \geq 1$ such that $k[\mathcal{Q}]$ is isomorphic to $k[\mathcal{P}]$ where $\mathcal{P}$ is the submonoid of $\mathbb{N}^2$ generated by $(n+1,0), (1,1), (0, n+1)$.
By applying \cite[Theorem 2.1 (b)]{Gub98} (see also \cite{BGBook09}), we can show that $\mathcal{Q}$ is isomorphic to $\mathcal{P}$, as desired.
Conversely, assume that $\mathcal{Q}$ is isomorphic to the submonoid generated by $(n+1,0), (1,1), (0, n+1) \in \mathbb{N}^2$.
Then $k[\mathcal{Q}]$ is Gorenstein for an algebraically closed field $k$ because this is an $A_n$-type singularity.
Thus $R$ is also Gorenstein by Corollary \ref{logGorenstein}, as desired.
\end{proof}

From the above proposition, it follows that a complete Gorenstein local log-regular ring with the two-dimensional monoid has the following form.

\begin{corollary}\label{strtwodim}
Let $(R, \mcQ, \alpha )$ be a local log-regular ring where $\mcQ$ is a two-dimensional finitely generated, cancellative, reduced, and root closed monoid.
Then the following assertions hold.
\begin{enumerate}
\item
Suppose that $R$ is of equal characteristic.
Then $R$ is Gorenstein if and only if $\widehat{R}$ is isomorphic to $k\llbracket s^{n+1}, st, t^{n+1}, x_1,\ldots, x_r \rrbracket$ for some $n\geq 1$.
\item
Suppose that $R$ is of mixed characteristic.
Then $R$ is Gorenstein if and only if $\widehat{R}$ is isomorphic to $C(k)\llbracket s^{n+1}, st, t^{n+1}, x_1,\ldots, x_r\rrbracket/(\theta)$ for some $n \geq 2$ where $C(k)$ is the Cohen ring of the residue field $k$ and $\theta$ is an element of $C(k)\llbracket s^{n+1}, st, t^{n+1}, x_1,\ldots, x_r\rrbracket$ whose constant term is $p$.

\end{enumerate}
\end{corollary}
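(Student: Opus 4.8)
The plan is to deduce Corollary \ref{strtwodim} from Proposition \ref{exampletwodim} by feeding the resulting monoid through the structure theorem, Theorem \ref{Katostructure}. First I would reduce to the complete case: a Noetherian local ring is Gorenstein if and only if its $\fm$-adic completion is, and by Theorem \ref{Katostructure} the completion $\widehat R$ is again a local log-regular ring with the \emph{same} monoid $\mcQ$ (only the coefficient ring changes). Hence $R$ is Gorenstein precisely when $\widehat R$ is, and by Proposition \ref{exampletwodim} this happens exactly when $\mcQ$ is isomorphic to the submonoid $\mathcal P_n := \langle (n+1,0),(1,1),(0,n+1)\rangle\subseteq\mathbb N^2$ for a suitable $n$.

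Next I would substitute $\mcQ\cong\mathcal P_n$ into the two cases of Theorem \ref{Katostructure}. With $r=\dim(R/I_\alpha)$ and $x_1,\dots,x_r$ a lift of a regular system of parameters of $R/I_\alpha$, one gets $\widehat R\cong k\llbracket\mathcal P_n\oplus\mathbb N^r\rrbracket$ in equal characteristic and $\widehat R\cong C(k)\llbracket\mathcal P_n\oplus\mathbb N^r\rrbracket/(\theta)$, with $\theta$ of constant term $p$, in mixed characteristic. It then remains to identify the completed monoid algebra: under the natural embedding the generators $(n+1,0),(1,1),(0,n+1)$ of $\mathcal P_n$ correspond to the monomials $s^{n+1},st,t^{n+1}$ inside $k[s,t]$ (the single defining relation being $s^{n+1}t^{n+1}=(st)^{n+1}$), so $k\llbracket\mathcal P_n\rrbracket\cong k\llbracket s^{n+1},st,t^{n+1}\rrbracket$ and hence $k\llbracket\mathcal P_n\oplus\mathbb N^r\rrbracket\cong k\llbracket s^{n+1},st,t^{n+1},x_1,\dots,x_r\rrbracket$, and the same identification works verbatim over $C(k)$. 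This produces the displayed normal forms. For the converse directions I would run the argument backwards: a ring of the displayed shape carries the log structure induced by $\mathcal P_n\hookrightarrow\mathbb N^2$, its associated monoid is $\mathcal P_n$, and $\mathcal P_n$ satisfies condition (3) of Corollary \ref{logGorenstein} (indeed $\relint\mathcal P_n=(1,1)+\mathcal P_n$), so such a ring is Gorenstein; combined with Theorem \ref{Katostructure} this closes the equivalence in both characteristics.

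The step I expect to be the main obstacle is pinning down the precise range of $n$ in each case, since this is the only place where the equal- and mixed-characteristic statements genuinely diverge: in equal characteristic every $n\ge 1$ occurs and nothing further is needed, whereas in mixed characteristic one must examine more closely which $\mathcal P_n$ can appear together with an element $\theta$ of constant term $p$ and read the admissible range off the explicit presentation $C(k)\llbracket s^{n+1},st,t^{n+1},x_1,\dots,x_r\rrbracket/(\theta)$. I would carry this out by a direct inspection of that defining data, keeping track of the constant-term condition on $\theta$; this is the one point of the argument that is not a purely formal translation of Proposition \ref{exampletwodim} and Theorem \ref{Katostructure}.
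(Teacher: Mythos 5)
Your argument is correct and is essentially the paper's own proof, which consists of the single line ``these follow from Proposition \ref{exampletwodim} and Theorem \ref{Katostructure}''; the content you supply --- reduction to the completion, the identification $k\llbracket\mathcal{P}_n\rrbracket\cong k\llbracket s^{n+1},st,t^{n+1}\rrbracket$ (and likewise over $C(k)$), and running the argument backwards for the converse --- is exactly the intended route. The one point you flag as a possible obstacle, the range $n\ge 2$ in the mixed-characteristic case versus $n\ge 1$ in Proposition \ref{exampletwodim}, is not justified in the paper either and appears to be a typo rather than a genuine asymmetry between the two characteristics, so no further inspection is needed there.
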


\begin{proof}
These follow from Proposition \ref{exampletwodim} and Theorem \ref{Katostructure}.
\end{proof}

We also provide examples of non-Gorenstein local log-regular rings.
\begin{example}
Let $\mathcal{P}$ be the submonoid of $\mathbb{N}^2$ generated by
$
a_1 := (1,0), a_2 := (1,1), a_3 := (1,2), a_4 := (1,3).
$
\begin{enumerate}
\item
Set $R := \mathbb{Z}_p\llbracket \mathcal{P}\rrbracket / (p- e^{a_1}) \cong \mathbb{Z}_p\llbracket s, st, st^2,st^3 \rrbracket/(p-s)$ and set $\alpha : \mathcal{P} \to \mathbb{Z}_p\llbracket \mathcal{P}\rrbracket \to R$
Then $(R, \mathcal{Q}, \alpha)$ is a local log-regular ring.
By Proposition \ref{exampletwodim}, we know that $R$ is not Gorenstein.
Moreover, $R$ is also isomorphic to $\mathbb{Z}_p\llbracket pt, pt^2, pt^3 \rrbracket$.
Since the relative interior of $\mathcal{Q}$ is generated by $(1,1)$ and $(1,2)$, the canonical module of $R$ is generated by $e^{a_2}, e^{a_3}$ and it is isomorphic to the ideal of $\mathbb{Z}_p\llbracket pt, pt^2,pt^3 \rrbracket$ generated by $pt$ and $pt^2$.
\item
Set $S := \mathbb{Z}_p\llbracket \mathcal{P}\rrbracket$.
Then $(S, \mathcal{P}, \mathcal{P} \hookrightarrow S)$ is a local log-regular ring.
Then the $S$ is isomorphic to $\mathbb{Z}_p\llbracket s, st, st^2, st^3,x\rrbracket/(p-x)$. 
Then the canonical module of $S$ is isomorphic to the ideal generated by $pe^{a_2}$ and $pe^{a_3}$.
\end{enumerate}
\end{example}

\section{Divisor class groups of local log-regular rings}\label{SecDiv}

In this section, we prove that the divisor class group of a local log-regular ring is finitely generated.

\begin{lemma}\label{7231801}
Let $(R, \mathcal{Q}, \alpha)$ be a local log-regular ring and let $\fp$ be a height one prime ideal of $R$.
Then the following are equivalent.
\begin{enumerate}
\item
There exists a height one prime ideal $\fq$ of $\mathcal{Q}$ such that $\fp = \fq R$.
\item
 The intersection of $\im \alpha$ and $\alpha^{-1}(\fp)$ is not empty.
\end{enumerate}
\end{lemma}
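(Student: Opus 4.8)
First I would observe that the implication $(1) \Rightarrow (2)$ is nearly immediate. If $\fp = \fq R$ for a height one prime $\fq$ of $\mcQ$, then for any $a \in \fq$ we have $\alpha(a) \in \alpha(\fq) \subseteq \fq R = \fp$, so that $a \in \alpha^{-1}(\fp)$ provided we are careful about $a$ also lying in $\mcQ$ (which it does). Since $\fq$ is a nonempty set (it is a height one, hence nonzero, prime ideal), picking any $a \in \fq$ gives an element of $\mcQ$ with $\alpha(a) \in \fp$; thus $\alpha(a) \in \im\alpha \cap \fp$, and in particular the intersection of $\im\alpha$ with $\alpha^{-1}(\fp)$, viewed appropriately, is nonempty. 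One small subtlety: the statement says "the intersection of $\im\alpha$ and $\alpha^{-1}(\fp)$", where $\im\alpha \subseteq R$ and $\alpha^{-1}(\fp) \subseteq \mcQ$; I would read this as saying that there is $a \in \mcQ$ with $\alpha(a) \in \fp$ and $\alpha(a) \neq 0$ (equivalently $a \notin \ker$, but since $\alpha$ need not be injective in general one works with $\alpha^\bullet$), or more simply that $\fp \cap \im\alpha \neq \{0\}$. This is where I would pin down the precise reading before proceeding.

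For the harder direction $(2) \Rightarrow (1)$, I would argue as follows. Suppose $\alpha(a) \in \fp$ for some $a \in \mcQ$ with $\alpha(a) \neq 0$. Let $\fq := \alpha^{-1}(\fp) \cap \mcQ$; this is a prime ideal of $\mcQ$ (the preimage of a prime ideal under a monoid homomorphism is prime, as recalled in the text), and it is nonempty and proper by hypothesis, so it is a nonzero prime of $\mcQ$. By Theorem \ref{logregequiv} (the "very solid" characterization), $\fq R$ is a prime ideal of $R$ with $\alpha^{-1}(\fq R) = \fq$. Clearly $\fq R \subseteq \fp$ since $\alpha(\fq) \subseteq \fp$ and $\fp$ is an ideal. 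Since $\fp$ has height one, to conclude $\fq R = \fp$ it suffices to show $\fq R \neq 0$, which holds because $\alpha(a) \in \fq R \setminus \{0\}$; then $0 \subsetneq \fq R \subseteq \fp$ with $\Ht \fp = 1$ forces $\fq R = \fp$, and consequently $\fq$ has height one as well (if $\fq$ contained a smaller nonzero prime $\fq'$, then $\fq' R$ would be a nonzero prime strictly contained in $\fq R = \fp$ by the very solid property, contradicting $\Ht\fp = 1$). Finally $\fq = \alpha^{-1}(\fq R) = \alpha^{-1}(\fp) \cap \mcQ$ confirms the description, and we take this $\fq$ in (1).

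**The main obstacle** I anticipate is bookkeeping about when a prime of $R$ "comes from" a prime of $\mcQ$ versus a possibly larger contraction, and ensuring the height-one hypothesis on $\fp$ genuinely forces $\fq$ to be height one rather than merely nonzero — this uses the very solid property in the form that contraction and extension set up an order-preserving correspondence between $\Spec(\mcQ)$ and the primes of $R$ of the form $\fq R$, which is exactly Theorem \ref{logregequiv}. A secondary point requiring care is that $\alpha$ need not be injective, so phrases like "$\im\alpha \cap \fp$" must be interpreted with the reduction $\alpha^\bullet: \mcQ \to R^\bullet$ in mind when $0 \in \im\alpha$; but since $\alpha^{-1}(R^\times) = \mcQ^\times$ for a local log ring and $\fp$ is a proper ideal, no unit of $R$ lies in $\fp$, so the only element of $\im\alpha \cap \fp$ that could cause trouble is $0$ itself, which is precisely what condition (2) is set up to exclude. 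I expect the write-up of $(2)\Rightarrow(1)$ to be about a half page once these conventions are fixed.
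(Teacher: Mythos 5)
Your proposal is correct and follows essentially the same route as the paper: take $\fq = \alpha^{-1}(\fp)$, use the very solid property (Theorem \ref{logregequiv}) together with the fact that $\alpha$ never hits $0$ (Lemma \ref{2241637}) to get a nonzero prime $\fq R \subseteq \fp$, and conclude $\fq R = \fp$ from $\Ht\fp = 1$. Your write-up is in fact more careful than the paper's, which asserts without argument that $\alpha^{-1}(\fp)$ has height one, a point you justify via the extension--contraction correspondence.
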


\begin{proof}
The implication $(1) \Rightarrow (2)$ is obvious, hence let us consider the implication $(2) \Rightarrow (1)$.
Note that $\alpha^{-1}(\fp)$ is a height one prime ideal by assertion (2).
Since $\alpha$ is very solid and any element of $\mathcal{Q}$ does not map to $0$ by Lemma \ref{2241637}, we obtain $\alpha^{-1}(\fp)R = \fp$.
Hence assertion (1) holds.
\end{proof}



\begin{lemma}
Let $(R, \mathcal{Q}, \alpha)$ be a log ring and let $I, J$ be ideals of $\mcQ$.
Assume that $R$ is $\alpha$-flat.
Then
$$
\alpha(I)R \cap \alpha(J)R = \alpha(I \cap J)R
$$
holds.
\end{lemma}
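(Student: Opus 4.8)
The plan is to obtain the nontrivial inclusion $\alpha(I)R \cap \alpha(J)R \subseteq \alpha(I\cap J)R$ (the reverse one being obvious) from a Mayer--Vietoris short exact sequence over the monoid algebra, tensored with $R$, using $\alpha$-flatness to keep it left exact. Write $A := \mathbb{Z}[\mcQ]$ and, for an $s$-ideal $K$ of $\mcQ$, let $\mathbb{Z}[K] \subseteq A$ denote the ideal generated by $\{e^a : a\in K\}$; the monoid homomorphism $\alpha$ extends uniquely to a ring homomorphism $A \to R$ with $e^a\mapsto\alpha(a)$, making $R$ an $A$-algebra.

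First I would record the combinatorial input in $A$. Because $A$ is $\mathbb{Z}$-free on the monomials $\{e^a : a\in\mcQ\}$ and $K$ is an $s$-ideal, $\mathbb{Z}[K] = \bigoplus_{a\in K}\mathbb{Z}\,e^a$; hence, taking $K = I, J, I\cap J, I\cup J$ (all $s$-ideals of $\mcQ$), one gets $\mathbb{Z}[I]\cap\mathbb{Z}[J] = \mathbb{Z}[I\cap J]$ and $\mathbb{Z}[I]+\mathbb{Z}[J] = \mathbb{Z}[I\cup J]$. These two identities give the usual short exact sequence of $A$-modules
$$
0 \to A/\mathbb{Z}[I\cap J] \xrightarrow{\ x \mapsto (x,x)\ } A/\mathbb{Z}[I]\oplus A/\mathbb{Z}[J] \xrightarrow{\ (x,y)\mapsto x-y\ } A/\mathbb{Z}[I\cup J] \to 0 ,
$$
whose exactness is the standard diagram chase using the intersection and sum identities just noted.

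Next I would apply $-\otimes_A R$. The $\alpha$-flatness of $R$ gives $\Tor_1^{A}(A/\mathbb{Z}[I\cup J], R) = 0$, so the tensored sequence is still short exact. It remains to identify the terms: tensoring $A \twoheadrightarrow A/\mathbb{Z}[K]$ with $R$ shows $(A/\mathbb{Z}[K])\otimes_A R \cong R/\alpha(K)R$, since the image in $R$ of the $A$-submodule $\mathbb{Z}[K]$ is the ideal $\sum_{a\in K}R\,\alpha(a) = \alpha(K)R$; in particular $\alpha(I\cup J)R = \alpha(I)R + \alpha(J)R$. So the sequence reads
$$
0 \to R/\alpha(I\cap J)R \to R/\alpha(I)R \oplus R/\alpha(J)R \to R/\bigl(\alpha(I)R + \alpha(J)R\bigr) \to 0 ,
$$
and the kernel of the first map is $\bigl(\alpha(I)R\cap\alpha(J)R\bigr)/\alpha(I\cap J)R$; its vanishing is precisely the inclusion we want.

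The argument is essentially formal once the ingredients are assembled, and I do not expect a genuine obstacle. The only points that deserve to be written carefully are the description $\mathbb{Z}[K] = \bigoplus_{a\in K}\mathbb{Z}\,e^a$ for $s$-ideals (and the resulting intersection and sum identities in $A$) and the identification $(A/\mathbb{Z}[K])\otimes_A R \cong R/\alpha(K)R$; note that for the latter one only needs the image of $\mathbb{Z}[K]\otimes_A R \to R$ in $R$, not injectivity of that map.
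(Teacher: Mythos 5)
Your proof is correct, and it takes a slightly different (and in one respect cleaner) route than the paper. The paper tensors the two-row commutative diagram with exact rows $0 \to \mathbb{Z}[I\cap J] \to \mathbb{Z}[I] \to \mathbb{Z}[I]/\mathbb{Z}[I\cap J] \to 0$ and $0 \to \mathbb{Z}[J] \to \mathbb{Z}[\mcQ] \to \mathbb{Z}[\mcQ]/\mathbb{Z}[J] \to 0$ with $R$ and then applies the snake lemma to identify $\alpha(J)R/\alpha(I\cap J)R$ with $\alpha(J)R/\bigl(\alpha(I)R\cap\alpha(J)R\bigr)$; you instead tensor the single Mayer--Vietoris sequence $0 \to A/\mathbb{Z}[I\cap J] \to A/\mathbb{Z}[I]\oplus A/\mathbb{Z}[J] \to A/\mathbb{Z}[I\cup J] \to 0$ and read off the inclusion $\alpha(I)R\cap\alpha(J)R \subseteq \alpha(I\cap J)R$ from the injectivity of the first tensored map. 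Both arguments rest on the same combinatorial facts ($\mathbb{Z}[K]=\bigoplus_{a\in K}\mathbb{Z}e^a$ for $s$-ideals $K$, hence $\mathbb{Z}[I]\cap\mathbb{Z}[J]=\mathbb{Z}[I\cap J]$ and $\mathbb{Z}[I]+\mathbb{Z}[J]=\mathbb{Z}[I\cup J]$) and on $\alpha$-flatness. A genuine advantage of your version is that the only Tor-vanishing you need, namely $\Tor_1^{A}(A/\mathbb{Z}[I\cup J],R)=0$, is literally an instance of the definition of $\alpha$-flatness as stated in the paper (since $I\cup J$ is again an ideal of $\mcQ$), whereas preserving exactness of the paper's first row requires $\Tor_1^{A}(\mathbb{Z}[I]/\mathbb{Z}[I\cap J],R)=0$, which takes a small extra argument to extract from that definition. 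One marginal case worth a sentence in a final write-up: if $I\cap J=\emptyset$ the statement should be checked directly (for cancellative $\mcQ$ with $I,J\neq\emptyset$ this does not occur, since $I+J\subseteq I\cap J$), but the paper glosses over this as well.
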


\begin{proof}
Let us consider the following diagram:
\[
\begin{CD}
 0 @>>> \mathbb{Z}[I \cap J] @>>>    \mathbb{Z}[I] @>>>  \mathbb{Z}[I]/ \mathbb{Z}[I \cap J] @>>> 0 \\
     @.       @VVV    @VVV  @VVV  @. \\
 0 @>>> \mathbb{Z}[J] @>>> \mathbb{Z}[\mcQ] @>>> \mathbb{Z}[\mcQ]/\mathbb{Z}[J] @>>> 0.
\end{CD} 
\]
By the $\alpha$-flatness of $R$, we obtain the following diagram:
\[
\begin{CD}
 0 @>>> \mathbb{Z}[I \cap J]\otimes_{\mathbb{Z}[\mcQ]}R @>>>    \mathbb{Z}[I]\otimes_{\mathbb{Z}[\mcQ]}R @>>>  \mathbb{Z}[I]/ \mathbb{Z}[I \cap J]\otimes_{\mathbb{Z}[\mcQ]}R @>>> 0 \\
     @.       @VVV    @VVV  @VVV  @. \\
 0 @>>> \mathbb{Z}[J]\otimes_{\mathbb{Z}[\mcQ]}R @>>> \mathbb{Z}[\mcQ]\otimes_{\mathbb{Z}[\mcQ]}R @>>> \mathbb{Z}[\mcQ]/\mathbb{Z}[J]\otimes_{\mathbb{Z}[\mcQ]}R @>>> 0.
\end{CD} 
\]
This diagram is isomorphic to the following one:
\[
\begin{CD}
 0 @>>> \alpha(I \cap J)R @>>>    \alpha(I)R @>>>  \alpha(I)R/\alpha(I \cap J)R @>>> 0 \\
     @.       @VVV    @VVV  @VVV  @. \\
 0 @>>> \alpha(J)R @>>> R @>>> R/\alpha(J)R @>>> 0.
\end{CD}
\] 
Since the vertical arrows are injective, we obtain the following exact sequence by the snake lemma:
$$
0 \to \alpha(J)R/\alpha(I \cap J)R \to R/\alpha(I)R \xrightarrow{p} R/\alpha(I)R +\alpha(J)R \to 0.
$$
Thus since we obtain $\alpha(J)R/\alpha(I \cap J)R \cong \ker p = \alpha(I)R +\alpha(J)R/\alpha(I)R \cong \alpha(J)R/\alpha(I)R \cap \alpha(J)R$, the equality $\alpha(I \cap J)R  = \alpha(I)R \cap \alpha(J)R$ holds.
\end{proof}

\begin{lemma}\label{722913}
Let $(R, \mathcal{Q}, \alpha)$ be a log ring where $R$ is a domain and $\mcQ$ is cancellative.
Let $J, J'$ be a fractional ideal of $\mcQ$.
Then the equality $(J \cap J')R = JR \cap J'R$ holds.
\end{lemma}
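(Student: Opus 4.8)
The plan is to clear denominators, reducing the identity for the fractional ideals $J,J'$ to the same identity for honest $s$-ideals of $\mathcal{Q}$, which is exactly the preceding lemma.

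The inclusion $(J\cap J')R \subseteq JR \cap J'R$ is immediate, so the work is in the reverse inclusion. First I would record the set-up: since $R$ is a domain and $\alpha$ is injective, $\im\alpha \subseteq R^\bullet$ by Lemma \ref{2241637}, so $\alpha^\bullet$ extends to a group homomorphism $\widetilde{\alpha}\colon \mathcal{Q}^{\gp}\to \Frac(R)^\times$, and for a fractional ideal $I$ of $\mathcal{Q}$ the module $IR$ is read as the $R$-submodule of $\Frac(R)$ generated by $\widetilde{\alpha}(I)$. The one elementary fact I need is that for each $x\in\mathcal{Q}^{\gp}$ multiplication by the unit $\widetilde{\alpha}(x)$ is an $R$-linear automorphism of $\Frac(R)$: it carries $IR$ onto $(x+I)R$ for every fractional ideal $I$, and, being injective, it commutes with finite intersections of submodules.

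Now I would choose a common denominator. Because $J$ and $J'$ are fractional, there are $a,a'\in\mathcal{Q}$ with $aJ\subseteq\mathcal{Q}$ and $a'J'\subseteq\mathcal{Q}$, and then $x:=a+a'\in\mathcal{Q}$ gives $xJ=a'+aJ\subseteq\mathcal{Q}$ and $xJ'=a+a'J'\subseteq\mathcal{Q}$; set $I:=xJ$ and $I':=xJ'$, which are $s$-ideals of $\mathcal{Q}$. Since $\mathcal{Q}^{\gp}$ is a group, translation by $x$ is injective, whence $x(J\cap J')=xJ\cap xJ'=I\cap I'$. Applying the automorphism $\widetilde{\alpha}(x)\cdot(-)$ to both sides of the asserted identity turns $JR\cap J'R$ into $IR\cap I'R$ and $(J\cap J')R$ into $(I\cap I')R$, so the claim becomes $(I\cap I')R=IR\cap I'R$ for the $s$-ideals $I,I'$. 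That equality is the preceding lemma (note it is invoked through its $\alpha$-flatness hypothesis, which is available in our applications since there the ambient log ring is local log-regular, hence $\alpha$-flat by Proposition \ref{characterizationlogreg}), and the proof is complete. The only step deserving care is verifying that $\widetilde{\alpha}(x)$ genuinely intertwines the two sides as submodules of $\Frac(R)$; everything else is formal, so this is the main — and rather mild — obstacle.
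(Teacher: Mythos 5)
Your proof is correct and follows essentially the same route as the paper's: choose a common denominator $x\in\mathcal{Q}$ with $xJ,xJ'\subseteq\mathcal{Q}$, use that multiplication by the nonzero element $\alpha(x)$ of the domain $R$ commutes with intersections and that $x(J\cap J')=xJ\cap xJ'$ by cancellativity, and thereby reduce to the case of $s$-ideals contained in $\mathcal{Q}$. Your observation that this reduction ultimately rests on the $\alpha$-flatness hypothesis of the preceding lemma (which is not literally among the stated hypotheses here) is a fair point that the paper's own one-line proof leaves implicit.
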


\begin{proof}
Choose $x \in \mcQ$ such that $xJ, xJ' \subseteq \mcQ$.
Then it suffices to show that $x(JR \cap J'R) = xJR \cap xJ'R$, but this follows from $x(J \cap J') = xJ \cap xJ'$.
\end{proof}

\begin{lemma}\label{regfalphaflat}
    Let $(R,\mathcal{Q}, \alpha)$ be a local log-regular ring.
    Then $R$ is faithfully $\alpha$-flat.
\end{lemma}

\begin{proof}
    Let $M$ be a $\mathbb{Z}[\mathcal{Q}]$-module such that $M\otimes_{\mathbb{Z}[\mcQ]} R =0$.
    Then it suffices to show that $M$ is cyclic.
    In this case, $M$ is isomorphic to $\mathbb{Z}[\mcQ]/I$ where $I$ is the annihilator of some $x \in M$.
    Also, we have the equalities
    $$
    I = IR \cap \mathbb{Z}[\mcQ] = R \cap \mathbb{Z}[\mcQ] = \mathbb{Z}[\mcQ],
    $$
    where the first equality follows from the injectivity of $\mathbb{Z}[\mcQ] \to R$ and the second equality follows from the assumption $M \otimes_{\mathbb{Z}[\mcQ]} R=0$.
    This implies $M=0$, as desired.
\end{proof}

\begin{lemma}
Let $(R,\mathcal{Q},\alpha)$ be a local log-regular ring and let $I, J, J' \subseteq \mathcal{Q}^{\gp}$ be fractional ideals of $\mcQ$.
Assume that $I$ is finitely generated.
Then the following assertions hold.
\begin{enumerate}
\item
The equality $(J : I)R = (JR : IR)$ holds.
\item
$JR$ is equal to $J'R$ if and only if $J$ is equal to $J'$.
\item
$\Div(\alpha) : \Div(\mcQ) \to \Div(R)$ is well-defined and it is injective.
\end{enumerate}
\end{lemma}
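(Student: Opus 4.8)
The plan is to treat the three assertions in order: (1) will be formal, (2) is the technical core, and (3) will follow quickly from (1) and (2). First I would reduce to the case where $\mcQ$ is reduced. Using the splitting $\mcQ\cong\sharQ\times\mcQ^{\times}$ of Lemma~\ref{decompmon} and the induced local log-regular ring $(R,\sharQ,\overline{\alpha})$ of Remark~\ref{rmk2125}, every fractional ideal $J$ of $\mcQ$ is stable under translation by $\mcQ^{\times}$, hence of the shape $S_{J}\times\mcQ^{\times}$ for a unique fractional ideal $S_{J}$ of $\sharQ$; the map $J\mapsto S_{J}$ is a bijection preserving inclusion, intersection, colon ideals, finite generation and divisoriality, and one checks $JR=S_{J}R$ since $\alpha$ carries $\mcQ^{\times}$ into $R^{\times}$. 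So it suffices to prove the lemma for $(R,\sharQ,\overline{\alpha})$, i.e.\ we may assume $\mcQ$ is finitely generated, cancellative, reduced and root closed. Then $\alpha$ is injective (Theorem~\ref{Katostructure}), and since $R$ is a normal local domain it extends to an injection $\alpha^{\gp}\colon\mcQ^{\gp}\hookrightarrow\Frac(R)^{\times}$; for a fractional ideal $J$ of $\mcQ$ we let $JR$ denote the $R$-submodule of $\Frac(R)$ generated by $\alpha^{\gp}(J)$.

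\emph{For (1).} Write $I=\langle a_{1},\dots,a_{n}\rangle$ with $a_{i}\in\mcQ^{\gp}$, so $IR=\sum_{i=1}^{n}\alpha^{\gp}(a_{i})R$ and $(J:I)=\bigcap_{i=1}^{n}\bigl((-a_{i})+J\bigr)$, a finite intersection of fractional ideals of $\mcQ$. Since $\alpha^{\gp}(a_{i})$ is a unit of $\Frac(R)$ we get $\bigl(JR:\alpha^{\gp}(a_{i})R\bigr)=\alpha^{\gp}(-a_{i})\,JR=\bigl((-a_{i})+J\bigr)R$, hence $(JR:IR)=\bigcap_{i}\bigl(JR:\alpha^{\gp}(a_{i})R\bigr)=\bigcap_{i}\bigl((-a_{i})+J\bigr)R$, and Lemma~\ref{722913}, applied repeatedly, identifies this with $\bigl(\bigcap_{i}((-a_{i})+J)\bigr)R=(J:I)R$.

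\emph{For (2).} Only $JR=J'R\Rightarrow J=J'$ needs an argument. Passing to $K:=J\cup J'$ (a fractional ideal with $KR=JR=J'R$) it suffices to show $J\subseteq K$ and $JR=KR$ force $J=K$; then symmetrically $J'=K$. Multiplying by $x\in\mcQ$ with $xK\subseteq\mcQ$ and dividing back by the nonzerodivisor $\alpha(x)$, we may assume $J\subseteq K\subseteq\mcQ$ are ideals with $\alpha(J)R=\alpha(K)R$. Now I would use that $R$ is $\alpha$-flat (Proposition~\ref{characterizationlogreg}): tensoring $0\to\BZ[J]\to\BZ[\mcQ]\to\BZ[\mcQ]/\BZ[J]\to0$ and its analogue for $K$ with $R$ over $\BZ[\mcQ]$ stays exact, which gives $\BZ[J]\otimes_{\BZ[\mcQ]}R\cong\alpha(J)R$, $\BZ[\mcQ]/\BZ[J]\otimes_{\BZ[\mcQ]}R\cong R/\alpha(J)R$, and likewise for $K$; comparing the sequences yields $(\BZ[K]/\BZ[J])\otimes_{\BZ[\mcQ]}R\cong\alpha(K)R/\alpha(J)R=0$. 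Base changing along $\BZ[\mcQ]\twoheadrightarrow\BZ[\mcQ]/\BZ[\mcQ^{+}]=\BZ$ and then over $\BZ$ to the nonzero ring $R/I_{\alpha}$ gives $(\BZ[K]/\BZ[J])\otimes_{\BZ[\mcQ]}R/I_{\alpha}=0$. But if $J\subsetneq K$, since the finitely generated ideal $K$ is not contained in $J$ it has a minimal generator $b\notin J$; one checks $b\notin\mcQ^{+}+K$, so $e^{b}$ has nonzero image in $(\BZ[K]/\BZ[J])/\BZ[\mcQ^{+}](\BZ[K]/\BZ[J])=(\BZ[K]/\BZ[J])\otimes_{\BZ[\mcQ]}\BZ$, hence a nonzero image in $(\BZ[K]/\BZ[J])\otimes_{\BZ[\mcQ]}R/I_{\alpha}$ after $\otimes_{\BZ}R/I_{\alpha}$, a contradiction. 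So $J=K$.

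\emph{For (3).} Being a finitely generated monoid, $\mcQ$ is Noetherian, so a divisorial ideal $J$ of $\mcQ$ and its inverse $(\mcQ:J)=J^{-1}$ are finitely generated, while $R$ is Krull, being a normal Noetherian domain. For $J\in\Div(\mcQ)$ we have $J=\bigl(\mcQ:(\mcQ:J)\bigr)$, and applying (1) twice — legitimate since $J$ and $(\mcQ:J)$ are finitely generated and $\mcQ R=R$ — gives $JR=\bigl(R:(JR)^{-1}\bigr)$, so $JR$ is already a divisorial fractional ideal of $R$; thus $\Div(\alpha)\colon J\mapsto JR$ is a well-defined map into $\Div(R)$ (and the same computation applied to $(IJ)^{\ast}$, together with the hull identities, shows it respects $\bullet$). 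Injectivity is then immediate from (2): $\Div(\alpha)(J)=\Div(\alpha)(J')$ gives $JR=J'R$, hence $J=J'$. The principal difficulty is step (2): the reduction to honest monoid ideals and, above all, the homological identification of $(\BZ[K]/\BZ[J])\otimes_{\BZ[\mcQ]}R$ via $\alpha$-flatness, after which the only remaining point is the combinatorial observation that a minimal generator of $K$ lying outside $J$ survives modulo $\BZ[\mcQ^{+}]$.
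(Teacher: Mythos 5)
Your proposal is correct and follows essentially the same route as the paper: part (1) by decomposing $(J:I)$ into a finite intersection of translates of $J$ and invoking Lemma~\ref{722913}, part (2) by clearing denominators and using $\alpha$-flatness of $R$ over $\mathbb{Z}[\mcQ]$, and part (3) as a formal consequence of (1) and (2). The only difference is that where the paper appeals tersely to ``faithful $\alpha$-flatness'' to pass from $\mathbb{Z}[xJ]\otimes_{\mathbb{Z}[\mcQ]}R=\mathbb{Z}[xJ']\otimes_{\mathbb{Z}[\mcQ]}R$ to $xJ=xJ'$, you supply the justification explicitly via the vanishing of $(\mathbb{Z}[K]/\mathbb{Z}[J])\otimes_{\mathbb{Z}[\mcQ]}R$ and a Nakayama-type argument with a minimal generator surviving modulo $\mathbb{Z}[\mcQ^{+}]$, which is a welcome amplification rather than a deviation.
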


\begin{proof}
We express $I = a_1\mcQ \cup \cdots \cup a_n\mcQ$ for some $a_1,\ldots, a_n \in \mcQ^{\gp}$.
Thus we obtain $(J : I) = a_1^{-1}J \cap \cdots \cap a_n^{-1}J$ and $(JR : IR) = a_1^{-1}JR \cap \cdots \cap a_n^{-1}JR$.
Here, by Lemma \ref{722913}, the equality $a_1^{-1}JR \cap \cdots \cap a_n^{-1}JR = (a_1^{-1}J \cap \cdots \cap a_n^{-1}J)R$ holds. Hence the assertion (1) holds.

Next to prove the assertion (2), we may assume that $J \subseteq J'$ after replacing $J'$ with $J' \cup J$.
Assume that the equality $JR = J'R$ holds.
Take an element $x \in \mcQ$ such that $xJ, xJ' \subseteq \mcQ$.
Then we have the short exact sequence
$$
\xymatrix{
xJ R \ar[r] & xJ' R \ar[r] & (\mathbb{Z}[xJ']/\mathbb{Z}[xJ]) \otimes_{\mathbb{Z}[\mcQ]} R \ar[r] & 0.
}
$$
Since we have $xJR = xJ'R$ and $R$ is faithfully $\alpha$-flat by Lemma \ref{regfalphaflat}, we obtain the equality $\mathbb{Z}[xJ] = \mathbb{Z}[xJ']$.
Hence $xJ = xJ'$ holds.
Since the converse implication is obvious, the assertion (2) holds.

Finally, the first assertion of (3) follows from (1), and the second follows from (2).
\end{proof}

\begin{proposition}\label{Clinjective}
Let $(R,\mathcal{Q}, \alpha)$ be a local log-regular ring.
Then $\Cl(\alpha) :\Cl(\mathcal{Q}) \to \Cl(R)$ is well-defined and it is injective.
\end{proposition}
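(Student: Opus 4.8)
The plan is to see that $\Div(\alpha)$ preserves linear equivalence of divisorial ideals (so that $\Cl(\alpha)$ is well defined), and then to reduce injectivity, via the lattice description of divisor class groups, to a statement about ``units off the toric locus'' which I would settle by comparing with a monoid algebra.

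\textbf{Well-definedness.} By Remark \ref{rmk2125} we have factorizations $\mcQ\xrightarrow{\pi}\sharQ\xrightarrow{\overline{\alpha}}R$ with $\alpha=\overline{\alpha}\circ\pi$ and $(R,\sharQ,\overline{\alpha})$ again local log-regular; since the map $\Cl(\mcQ)\to\Cl(\sharQ)$ induced by $\pi$ is an isomorphism (divisor class groups depend only on the associated reduced monoid, cf.\ Lemma \ref{LemmaCIC}), we may assume $\mcQ$ is finitely generated, cancellative, reduced and root closed, hence a Noetherian Krull monoid, and $\mcQ\subseteq\mathbb{N}^l$ by Remark \ref{submono}. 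Then $\alpha$ is injective by Theorem \ref{Katostructure} and, $R$ being a domain, $\alpha^\bullet$ extends to an injective group homomorphism $(\alpha^\bullet)^{\gp}\colon\mcQ^{\gp}\hookrightarrow\Frac(R)^\times$. For $a\in\mcQ^{\gp}$ and a divisorial fractional ideal $J$ of $\mcQ$ one has $\Div(\alpha)(a+\mcQ)=(\alpha^\bullet)^{\gp}(a)R$ and $\Div(\alpha)(a+J)=(\alpha^\bullet)^{\gp}(a)\,\Div(\alpha)(J)$ (using $(cX)^{*}=cX^{*}$ for a fractional ideal $X$ of the Krull domain $R$ and $c\in\Frac(R)^\times$); hence $\Div(\alpha)$ carries $\Prin(\mcQ)$ into $\Prin(R)$ and respects linear equivalence, so it descends to $\Cl(\alpha)\colon\Cl(\mcQ)\to\Cl(R)$. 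A routine check with $(IJ)R=(IR)(JR)$ shows $\Div(\alpha)$ is moreover a homomorphism for the $\bullet$-products; combined with the preceding lemma it is an injective homomorphism of abelian groups.

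\textbf{Reduction of injectivity.} Since $\Div(\alpha)$ is an injective group homomorphism, $\Cl(\alpha)$ is injective if and only if $\Div(\alpha)^{-1}(\Prin(R))\subseteq\Prin(\mcQ)$. Write $D$ and $D'$ for the sets of height-one prime ideals of $\mcQ$ and of $R$. Under $\Div(\mcQ)\cong\mathbb{Z}^{\oplus D}$ (Proposition \ref{LatticeDivprop}) and $\Div(R)\cong\mathbb{Z}^{\oplus D'}$, Lemma \ref{7231801} together with the very-solidity of $\alpha$ (Theorem \ref{logregequiv}) shows that $\fq\mapsto\fq R$ is an injection $D\hookrightarrow D'$ and that $\Div(\alpha)$ is exactly the coordinate inclusion it induces: each $\fq\in D$ goes to the prime divisor $\fq R$, with multiplicity one. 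Thus the remaining claim is: \emph{every $f\in\Frac(R)^\times$ whose divisor $\ddiv_R(f)$ is supported on the toric primes $\{\fq R:\fq\in D\}$ lies in $R^\times\cdot(\alpha^\bullet)^{\gp}(\mcQ^{\gp})$.}

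\textbf{The hard part.} Here I would pass to the completion: $\Cl(R)\to\Cl(\widehat R)$ is defined and injective ($R$ is normal local and $\widehat R$ is normal, by Theorem \ref{Katostructure}) and is compatible with the $\Cl(\alpha)$'s, so it suffices to treat $R=k\llbracket\mathcal{P}\rrbracket$ or $R=C(k)\llbracket\mathcal{P}\rrbracket/(\theta)$ with $\mathcal{P}=\mcQ\oplus\mathbb{N}^{r}$ (note $\Cl(\mathcal{P})\cong\Cl(\mcQ)$). In equal characteristic $k\llbracket\mathcal{P}\rrbracket$ is the completion of $k[\mathcal{P}]$ at its irrelevant maximal ideal $\mathfrak{m}$, and $\Cl(\alpha)$ then factors as $\Cl(\mathcal{P})\xrightarrow{\ \sim\ }\Cl(k[\mathcal{P}])\xrightarrow{\ \sim\ }\Cl(k[\mathcal{P}]_{\mathfrak{m}})\hookrightarrow\Cl(k\llbracket\mathcal{P}\rrbracket)$, the first isomorphism being Chouinard's (\cite{Cho81}), the second the identification of the class group of a positively graded normal domain with that of its localization at the irrelevant ideal, and the last the completion injection; all three carry toric divisors to toric divisors and hence agree with $\Cl(\alpha)$ on the classes that generate $\Cl(\mathcal{P})$, so $\Cl(\alpha)$ is injective. (The content of Chouinard's injectivity is that a rational function on $\Spec k[\mathcal{P}]$ whose divisor is supported on the toric boundary is a unit on the torus $\Spec k[\mathcal{P}^{\gp}]$, hence a scalar times a monomial.) The mixed-characteristic case, where $R=C(k)\llbracket\mathcal{P}\rrbracket/(\theta)$ is not a monoid algebra, is the step I expect to be the genuine obstacle: one must transport the description of ``units off the toric locus'' along the surjection $C(k)\llbracket\mathcal{P}\rrbracket\twoheadrightarrow R$, which I would attempt either by inverting $p$ to reduce to a characteristic-zero monoid algebra over $\Frac(C(k))$ while keeping track of the class of $pR$, or through a Grothendieck--Lefschetz type comparison of divisor class groups across the hypersurface section $\theta=0$.
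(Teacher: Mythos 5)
Your well-definedness argument and the reduction of injectivity to the statement about elements of $\Frac(R)^\times$ whose divisor is supported on the toric primes are sound, and the equal-characteristic chain through Chouinard's theorem works. But the proof is not complete: the mixed-characteristic case is exactly where you stop, and neither of the two strategies you float obviously closes it. Inverting $p$ in $R=C(k)\llbracket\mcQ\oplus\mathbb{N}^r\rrbracket/(\theta)$ does not return you to a monoid algebra over $\Frac(C(k))$ (the unit group of $R[1/p]$ localized at the monomials is not under control the way $k[\mcQ^{\gp}]^\times$ is), and a Grothendieck--Lefschetz-type comparison of class groups across the hypersurface $\theta=0$ requires parafactoriality-type hypotheses that are of the same order of difficulty as the statement you want. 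Since the proposition is asserted for arbitrary local log-regular rings, leaving this case open is a genuine gap.

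The gap is avoidable, and in a way that makes the whole ``units off the toric locus'' detour unnecessary. (The paper itself disposes of the proposition by citing Gabber--Ramero, Proposition 6.4.55, whose argument is characteristic-free.) You already have, from the lemma preceding the proposition, that $JR=J'R$ implies $J=J'$ for fractional ideals of $\mcQ$, and that $\Div(\alpha)$ is an injective group homomorphism into $\Div(R)$. Now suppose $[J]\in\ker\Cl(\alpha)$, i.e. $JR$ is a principal fractional ideal of $R$. Write $J=\bigcup_{i=1}^n(a_i+\mcQ)$ with $a_i\in\mcQ^{\gp}$ (divisorial ideals of a finitely generated normal monoid are finitely generated), so that $JR$ is generated as an $R$-module by the elements $\alpha^{\gp}(a_i)\in\Frac(R)^\times$. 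Since $JR\cong R$ is cyclic and $R$ is local, $JR/\fm JR$ is one-dimensional over the residue field, so the image of some $\alpha^{\gp}(a_i)$ is nonzero there and Nakayama gives $JR=\alpha^{\gp}(a_i)R=(a_i+\mcQ)R$; your lemma then forces $J=a_i+\mcQ\in\Prin(\mcQ)$. This settles injectivity uniformly in all characteristics, without completing and without any case division.
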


\begin{proof}
This is the same as in \cite[Proposition 6.4.55]{GRBook}.
\end{proof}

\begin{lemma}\label{mainthm1}
Let $(R,\mathcal{Q},\alpha)$ be a complete local log-regular ring.
Let $S$ be the image of $\alpha$.
There exists an $R$-algebra $T$ such that $T$ is a regular local ring and $S^{-1}R \cong S^{-1}T$.
\end{lemma}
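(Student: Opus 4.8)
The plan is to use the structure theorem (Theorem \ref{Katostructure}) to realize $R$ as a quotient of a complete monoid algebra over a field or a Cohen ring, and then to invert $S$ so that the monoid part collapses. First I would reduce to the case where $\alpha$ is injective: replacing $\mcQ$ by $\sharQ = \mcQ/\mcQ^\times$ as in Remark \ref{rmk2125} changes neither $R$ nor the image $S$, and by Theorem \ref{Katostructure} (applied after this reduction) we have $\widehat{R} = R \cong k\llbracket \mcQ \oplus \mbN^r \rrbracket$ in the equal-characteristic case and $R \cong C(k)\llbracket \mcQ \oplus \mbN^r \rrbracket/(\theta)$ in the mixed-characteristic case, with $\theta$ having constant term $p$. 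Here $S$ is generated by $\alpha(\mcQ)$, which maps to (the image of) $\{e^q : q \in \mcQ\}$ under these isomorphisms.

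The key point is that localizing at $S$ forces every $e^q$, $q \in \mcQ$, to become a unit, hence every $e^x$ for $x \in \mcQ^{\gp}$ becomes defined and invertible. Concretely, in the equal-characteristic case I would set $T := k\llbracket \mbN^r \rrbracket = k\llbracket \mbN^r\rrbracket$ embedded in $R$ via $\mbN^r \hookrightarrow \mcQ \oplus \mbN^r$; the claim is that $S^{-1}R \cong S^{-1}T$, i.e.\ inverting the $e^q$'s turns the $\mcQ$-direction into the group algebra $k[\mcQ^{\gp}]$ tensored on, which is a localization of a polynomial/Laurent extension and contributes only units and indeterminates that are already invertible — more carefully, $S^{-1}(k\llbracket \mcQ \oplus \mbN^r\rrbracket)$ should be naturally identified with $S^{-1}$ of a ring of the form $k[\mcQ^{\gp}]\llbracket \mbN^r \rrbracket$ after completing appropriately, and then one checks that $k[\mcQ^{\gp}]$, being a Laurent polynomial ring over $k$ (as $\mcQ^{\gp} \cong \BZ^d$ by Lemma \ref{transformation}, using that $\sharQ$ is cancellative and finitely generated so $\mcQ^{\gp}$ is torsion free of rank $d = \dim\mcQ$), is regular, so the combined ring is regular local. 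In the mixed-characteristic case one does the same with $T := C(k)\llbracket \mbN^r\rrbracket/(\theta)$; since $\theta$ has constant term $p$, after inverting $S$ (which does not involve $p$) the quotient by $\theta$ still cuts down a regular local ring by one, using that $\theta$ is part of a regular system of parameters in the localized complete regular ring, exactly as in the proof of Theorem \ref{canonicalmod}.

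The main obstacle I anticipate is the bookkeeping around completions: $R\llbracket \mcQ \oplus \mbN^r\rrbracket$ is a completion of $R[\mcQ \oplus \mbN^r]$, and localization does not commute with completion, so I must argue carefully that $S^{-1}R$ can still be presented as (a localization of) a regular ring. The cleanest route is probably to avoid completing in the $\mcQ$-direction after localization: write the completed monoid algebra as $\bigl(k\llbracket \mbN^r\rrbracket\bigr)\llbracket\mcQ\rrbracket$-style, observe $S \subseteq k[\mcQ]$ sits in the ``polynomial'' part, and invoke $\cite[Chapter \bfi, Proposition 3.6.1]{OgusBook}$ together with the fact that $\mcQ$ finitely generated makes $S^{-1}$ a finite localization, so that $S^{-1}k\llbracket \mcQ\oplus\mbN^r\rrbracket \cong S^{-1}\bigl(k[\mcQ^{\gp}] \otimes_k k\llbracket\mbN^r\rrbracket\bigr)$; the right-hand side is a localization of the regular ring $k[\mcQ^{\gp}]\otimes_k k\llbracket \mbN^r\rrbracket$, which after localizing at the relevant prime is regular local, and one takes $T$ to be this ring (or its appropriate localization) viewed as an $R$-algebra. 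The mixed-characteristic modification then follows by killing $\theta$ and using that $\theta$ remains a nonzerodivisor in a regular local ring with $p$ as part of a regular system of parameters, so the quotient is again regular local. I would present the equal-characteristic argument in full and indicate that the mixed case is identical modulo dividing by $\theta$.
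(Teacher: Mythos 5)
Your overall strategy---pass to the complete structure given by Theorem \ref{Katostructure} and then exhibit $S^{-1}R$ as a localization of a regular ring---is the same as the paper's, but the construction of $T$ has a genuine gap. Your first candidate, $T=k\llbracket \mbN^r\rrbracket$ with $r=\dim(R/I_\alpha)$ sitting inside $R$, cannot work: it makes $R$ a $T$-algebra rather than the reverse, and it loses the $\mcQ$-direction entirely, so $S^{-1}R$ and $S^{-1}T$ have different dimensions. Your corrected candidate rests on the identification $S^{-1}k\llbracket \mcQ\oplus\mbN^r\rrbracket \cong S^{-1}\bigl(k[\mcQ^{\gp}]\otimes_k k\llbracket\mbN^r\rrbracket\bigr)$, and this is false: already for $\mcQ=\mbN$ and $r=0$ the left-hand side is $k((t))$ while the right-hand side is $k[t,t^{-1}]$, and $k((t))$ is not a localization of $k[t,t^{-1}]$ (it contains elements transcendental over $k(t)$). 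Worse, there is no ring homomorphism from the completed monoid algebra $k\llbracket\mcQ\oplus\mbN^r\rrbracket$ to $k[\mcQ^{\gp}]\otimes_k k\llbracket\mbN^r\rrbracket$, since elements of the former are infinite sums over $\mcQ$; so your proposed $T$ is not an $R$-algebra at all. ``Un-completing'' the $\mcQ$-direction after localizing is exactly the step that cannot be justified, and the obstacle you flag in your last paragraph is not merely bookkeeping---it is fatal to this route.

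The paper resolves this by going in the opposite direction: instead of shrinking the completion, it enlarges the monoid. After absorbing the free factor by replacing $\mcQ$ with $\mcQ\oplus\mbN^{\dim(R/I_\alpha)}$ (so one may assume $\dim(R/I_\alpha)=0$), it uses the embedding $\mcQ\hookrightarrow\mbN^d$ with $d=\rank(\mcQ^{\gp})=\dim\mcQ$ from Lemma \ref{transformation} to produce an injection of \emph{complete} rings $k\llbracket\mcQ\rrbracket\hookrightarrow k\llbracket\mbN^d\rrbracket$ (resp.\ $C(k)\llbracket\mcQ\rrbracket/(\theta)\hookrightarrow C(k)\llbracket\mbN^d\rrbracket/(\theta)$ in mixed characteristic), and takes $T$ to be this regular local overring; inverting $S$ then identifies the two localizations. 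If you want to repair your argument, replace the ill-defined ring $k[\mcQ^{\gp}]\otimes_k k\llbracket\mbN^r\rrbracket$ by such a complete regular overring of $R$.
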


\begin{proof}
By replacing the monoid $\mcQ$ with $\mcQ \oplus \mathbb{N}^{\dim(R/I_\alpha)}$, we may assume $\dim(R/I_\alpha) = 0$.

First, suppose that $R$ is of equal characteristic.
Then $R$ is isomorphic to $k\llbracket \mathcal{Q}\rrbracket$ by Theorem \ref{Katostructure} (\ref{Katostructure1}).
Here, by Lemma \ref{transformation} (\ref{8159272}), the monoid homomorphism $\mathcal{Q} \hookrightarrow \mathbb{N}^r$ induces the injective ring homomorphism $k\llbracket \mathcal{Q} \rrbracket \hookrightarrow k\llbracket \mathbb{N}^r \rrbracket $.
Moreover, since $S^{-1}k\llbracket \mathcal{Q}\rrbracket $ is isomorphic to $S'^{-1}k\llbracket \mathbb{N}^r\rrbracket $ where $S'$ is the image of $S$, $S^{-1}k\llbracket \mathcal{Q} \rrbracket $ is a unique factorization domain.
Thus $k\llbracket \mathbb{N}^r\rrbracket$ is a desired regular local ring.

Next, suppose that $R$ is of mixed characteristic.
Then $R$ is isomorphic to $V\llbracket \mathcal{Q}\rrbracket /(\theta)$ by Theorem \ref{Katostructure} (\ref{Katostructure2}).
By the same discussion of the equal characteristic case, we obtain the injection $V\llbracket \mathcal{Q}\rrbracket /(\theta)V\llbracket \mathcal{Q}\rrbracket \hookrightarrow V\llbracket \mathbb{N}^r\rrbracket /(\theta)V\llbracket \mathbb{N}^r\rrbracket $, and $S^{-1}(V\llbracket \mathcal{Q}\rrbracket /(\theta)V\llbracket \mathcal{Q}\rrbracket )$ is isomorphic to $S'^{-1}(V\llbracket \mathbb{N}^r\rrbracket /(\theta)V\llbracket \mathbb{N}^r\rrbracket )$.
This also implies that $V\llbracket \mathbb{N}^r\rrbracket /(\theta)V\llbracket \mathbb{N}^r\rrbracket $ is a desired regular local ring.
\end{proof}

We prove the second main result in this paper.
For readers interested in the torsion part of the divisor class group of local log-regular rings, see \cite{INS22} and \cite{CLMST22}.

\begin{theorem}\label{fgcl}
Let $(R, \mathcal{Q}, \alpha)$ be a local log-regular ring.
Then $\Cl(\alpha) : \Cl(\mathcal{Q}) \to \Cl(R)$ is isomorphism.
In particular, the divisor class group $\Cl(R)$ is finitely generated.
\end{theorem}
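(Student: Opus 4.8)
The plan is to show that $\Cl(\alpha)\colon \Cl(\mathcal{Q}) \to \Cl(R)$ is surjective; injectivity is already established in Proposition \ref{Clinjective}, and once we have both, the finite generation of $\Cl(R)$ follows because $\mathcal{Q}$ is a finitely generated (hence Krull) monoid, so $\Cl(\mathcal{Q})$ is finitely generated by Proposition \ref{LatticeDivprop} (it is a quotient of $\mathbb{Z}^{\oplus D}$ with $D$ finite by Proposition \ref{fundamentalmspc}). I would first reduce to the complete case: the natural map $\Cl(R) \to \Cl(\widehat{R})$ is injective (in fact an isomorphism for a Cohen--Macaulay normal local ring admitting a canonical module, but injectivity alone suffices here), and it is compatible with $\Cl(\alpha)$ through the completion $\widehat{\alpha}$; so if $\Cl(\widehat{\alpha})$ is surjective and $\Cl(\alpha)$ is injective, a diagram chase forces $\Cl(\alpha)$ to be surjective. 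Thus assume $R$ is $\fm$-adically complete and separated.

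Now invoke Lemma \ref{mainthm1}: with $S = \im\alpha$ there is an $R$-algebra $T$ that is a regular local ring with $S^{-1}R \cong S^{-1}T$. The key point is the localization exact sequence for divisor class groups (Nagata's theorem): for a Krull domain $A$ and a multiplicative set generated by finitely many height-one primes, $\Cl(A) \to \Cl(S^{-1}A)$ is surjective with kernel generated by the classes of those height-one primes that meet $S$. Apply this to $R$: since $S^{-1}R \cong S^{-1}T$ and $T$ is regular (so $\Cl(T)=0$, hence $\Cl(S^{-1}T)=0$), we get that $\Cl(R)$ is generated by the classes of the height-one primes $\fp$ of $R$ with $\fp \cap S \neq \emptyset$. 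By Lemma \ref{7231801}, such a $\fp$ is exactly of the form $\fq R$ for a height-one prime $\fq$ of $\mathcal{Q}$. Therefore every generator of $\Cl(R)$ is in the image of $\Cl(\alpha)$ — one needs to check that $\Div(\alpha)$ sends the divisorial ideal corresponding to $\fq$ to (the divisorial closure of) $\fq R$, which is where Lemma \ref{7231801} and the $\alpha$-flatness identities on intersections of ideals (the preceding two lemmas) do the bookkeeping. This proves surjectivity of $\Cl(\widehat{\alpha})$, hence of $\Cl(\alpha)$.

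The main obstacle I anticipate is making the identification between the height-one primes of $\mathcal{Q}$ and the $S$-meeting height-one primes of $R$ fully precise at the level of \emph{divisorial ideals} and class groups, not just prime ideals: one must verify that $\ddiv(\fq) \mapsto (\fq R)^{\ast\ast}$ under $\Div(\alpha)$ and that these classes really generate, which requires knowing that $R$ is Krull (true, as $R$ is a Noetherian normal domain) and carefully applying Nagata's sequence to the finitely many relevant primes. A secondary subtlety is the reduction step: one should confirm that completion commutes with $\Cl(\alpha)$ in the required sense, using that $\widehat{R}$ is again local log-regular with the induced log structure (via Theorem \ref{Katostructure}) and that the canonical module computed in Theorem \ref{canonicalmod} behaves well under completion. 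Once these compatibilities are in place, the surjectivity is formal, and combined with Proposition \ref{Clinjective} we conclude that $\Cl(\alpha)$ is an isomorphism and $\Cl(R) \cong \Cl(\mathcal{Q})$ is finitely generated.
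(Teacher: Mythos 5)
Your proposal follows essentially the same route as the paper's proof: reduce to the complete case via the injection $\Cl(R)\hookrightarrow\Cl(\widehat{R})$, apply Nagata's exact sequence together with Lemma \ref{mainthm1} to see that $\Cl(R)$ is generated by classes of height-one primes meeting $S=\im\alpha$, identify those primes via Lemma \ref{7231801}, and conclude with the injectivity from Proposition \ref{Clinjective} and the finiteness of $\Spec(\mathcal{Q})$. The argument is correct, and the bookkeeping subtlety you flag (matching divisorial ideals of $\mathcal{Q}$ with divisorial closures of $\fq R$) is indeed the point the paper treats only implicitly.
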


\begin{proof}
Consider the composite map
$$
\Cl(\mcQ) \to \Cl(R) \to \Cl(\widehat{R}).
$$
Note that the former group homomorphism $\Cl(\mcQ) \to \Cl(R)$ is injective by Proposition \ref{Clinjective} and it is well-known that the latter group homomorphism $\Cl(R) \to \Cl(\widehat{R})$ is injective.
Since it suffices to show that $\Cl(\mcQ) \to \Cl(\widehat{R})$ is surjective, we may assume that $R$ is complete.

Let $S$ be as in Lemma \ref{mainthm1}.
By Nagate's theorem, we obtain the short exact sequence
$$
0 \to H \to \Cl(R) \to \Cl(S^{-1}R) \to 0,
$$
where $H$ is the subgroup of $\Cl(R)$ generated by classes of height one prime ideals that meets $S$.
Since $\Cl(S^{-1}R)$ is trivial by Lemma \ref{mainthm1}, we obtain $H=\Cl(R)$.
Moreover, we have an isomorphism $\Cl(\alpha) : \Cl(\mathcal{Q}) \xrightarrow{\cong} \im(\Cl(\alpha))=H$ by Lemma \ref{7231801}.
This implies that $\Cl(\alpha)$ is an isomorphism.
Finally, since the set of height one prime ideals of $\mathcal{Q}$ is finite by Lemma \ref{fundamentalmspc}, $\Cl(\mathcal{Q})$ is finitely generated.
Thus so is $\Cl(R)$.
\end{proof}

By combining Theorem \ref{fgcl} with Chouinard's Theorem \cite{Cho81} (or see \cite[Corollary 16.8]{GilBook84}), for a local log-regular ring $(R, \mcQ,\alpha)$, we obtain the isomorphisms
\begin{equation}\label{clisomlogmontoric}
   \Cl(\widehat{R}) \cong \Cl(R) \cong \Cl(\mcQ) \cong \Cl(k[\mcQ]).
\end{equation}

\begin{example}

Let $\mathcal{Q} \subseteq \mathbb{N}^4$ be the root closed submonoid generated by 
$$
x_1 := (1,1,0,0), x_2:= (0,0,1,1), x_3 := (1,0,0,1),\text{ and } x_4 := (0,1,1,0).
$$
Set $R:= W(k)\llbracket \mathcal{\mcQ} \rrbracket/(p-e^{x_4})W(k) \cong W(k)\llbracket x,y,z,w\rrbracket/(xy-zw,p-w) =W(k)\llbracket x,y,z\rrbracket/(xy-pz)$
where $k$ is a perfect field.
Then $(R, \mcQ, \alpha)$ is a local log-regular ring by Theorem \ref{Katostructure}, where $\alpha:\mathcal{Q} \to W(k)\llbracket \mcQ \rrbracket \to R$ is the composition of monoid homomorphisms.
Moreover, by applying the isomorphisms (\ref{clisomlogmontoric}), we obtain $\Cl(R) \cong \Cl(k[\mcQ]) = \mathbb{Z}$.
\end{example}

\begin{acknowledgement}
The author is extremely grateful to Ken-ichi Yoshida for his numerous suggestions regarding Section \ref{SectCanonical}.
The author especially thanks Alfred Geroldinger for his comments on Krull monoids and their divisor class groups and for introducing me to \cite{GFBook}.
The author is also deeply grateful to Bernd Ulrich for his advice on removing the completeness assumption in Theorem \ref{fgcl}.
The author thanks Shunsuke Takagi for the comments on Remark \ref{takagirem}.
The author thanks Kazuma Shimomoto for his helpful advice and kind support.
The author also thanks Ryo Ishizuka, Kei Nakazato, Kohsuke Shibata, Masataka Tomari, and Kei-ichi Watanabe for their several comments.
Finally, the author sincerely thanks the referee for their careful reading and many valuable suggestions.
\end{acknowledgement}

\bibliographystyle{alpha}
\bibliography{Ishiro_ref}

\end{document}